\def\frak{\mathfrak}
\theoremstyle{plain} \newtheorem{Thm}{Theorem}[section]
\theoremstyle{plain} \newtheorem{Cor}[Thm]{Corollary}
\theoremstyle{plain} \newtheorem{Prop}[Thm]{Proposition}
\theoremstyle{plain} \newtheorem{Lemma}[Thm]{Lemma}
\theoremstyle{definition} \newtheorem{Def}[Thm]{Definition}
\theoremstyle{definition} \newtheorem{Rem}[Thm]{Remark}
\theoremstyle{definition} 
\theoremstyle{definition} \newtheorem{Ex}[Thm]{Example}
\newcommand{\thmlist}{
\renewcommand{\theenumi}{\alph{enumi}}
\renewcommand{\labelenumi}{(\theenumi)}}
\renewcommand{\Re}{\mathop{\rm{Re}}}
\renewcommand{\Im}{\mathop{\rm{Im}}}
\newcommand{\id}{\mathop{\rm{id}}}
\newcommand{\inner}[2]{\langle#1,#2\rangle}
\newcommand{\C}{\ensuremath{\mathbb C}}
\newcommand{\D}{\ensuremath{\mathbb D}}
\newcommand{\R}{\ensuremath{\mathbb R}}
\newcommand{\Z}{\ensuremath{\mathbb Z}}
\newcommand{\N}{\ensuremath{\mathbb N}}
\renewcommand{\l}{\lambda}
\renewcommand{\a}{\alpha}
\renewcommand{\b}{\beta}
\newcommand{\fa}{\mathfrak{a}}
\newcommand{\la}{\l_\a}
\newcommand{\frakacs}{\frak a_{\C}^*}
\newcommand{\polya}{{\rm S}(\frak a_\C)}
\newcommand{\hyper}[4]{\ensuremath{\sideset{_{_2}}{_{_1}}{\mathop{F}}
\left(#1,#2;#3;#4\right)}}
\newcommand{\Hreg}{A_\C^{\rm reg}}
\renewcommand{\phi}{\varphi}
\begin{document}

\makeatletter
\title[Bounded hypergeometric functions]{Asymptotics of Harish-Chandra expansions,
bounded hypergeometric functions associated with root systems, and applications}
\author{E. K. Narayanan}
\address{Department of Mathematics, Indian Institute of Science, Bangalore -12, India}
\email{naru@math.iisc.ernet.in}
\author[UL]{A. Pasquale}
\address{
Laboratoire de Math\'ematiques et Applications de Metz (UMR CNRS 7122),
Universit\'e de Lorraine, F-57045 Metz, France\\
\textit{Present address:}
Institute Elie Cartan de Lorraine (IECL, UMR CNRS 7502),
Universit\'e de Lorraine, F-57045 Metz, France.}
\email{angela.pasquale@univ-lorraine.fr}
\author{S. Pusti}
\address{Mathematics Research Unit, University of Luxembourg,
Campus Kirchberg; 6, rue Richard Coudenhove-Kalergi;
L-1359, Luxembourg.}
\email{sanjoy.pusti@uni.lu}
\thanks{ The authors would like to thank Jean-Philippe Anker
for having sent them his article \cite{AnkerCRAS} and Patrick
Delorme for providing them with a preprint version of
\cite{Delorme}. This work was started when the first named author
was visiting the Universit\'e Paul Verlaine Metz. He thanks the
University for the invitation. He was also supported in part by a
grant from UGC center for Advanced Study. The second named author
would like to thank the Indian Institute of Science, Bangalore,
for
hospitality and financial support. 
}

\date{}
\subjclass[2010]{Primary: 33C67; secondary: 43A32, 43A90} 
\keywords{Hypergeometric functions, Harish-Chandra
series expansion, spherical functions, root systems, Cherednik
operators, hypergeometric Fourier transform}

\begin{abstract}
A series expansion for Heckman-Opdam hypergeometric functions
$\varphi_\l$ is obtained for all $\l \in \fa^*_{\mathbb C}.$ As a
consequence, estimates for $\varphi_\l$ away from the walls of a Weyl chamber are
established. We also characterize the bounded hypergeometric functions
and thus prove an analogue of the celebrated theorem of Helgason and Johnson on the bounded spherical functions on a Riemannian symmetric space of the noncompact type.
The $L^p$-theory for the hypergeometric Fourier transform is developed for $0<p<2$. In particular,
an inversion formula is proved when $1\leq p <2$.
\end{abstract}

\maketitle

\section*{Introduction}
A natural extension of Harish-Chandra's theory of spherical
functions on Riemannian symmetric spaces of the noncompact type
was introduced by Heckman and Opdam in the late eighties
(\cite{HOpd1}, \cite{Heck1}, \cite{Opd2}).
In this theory, the symmetric space is replaced by a triple
$(\mathfrak{a}, \Sigma, m)$ consisting of a finite dimensional
real Euclidean vector space $\mathfrak{a}$, a root system $\Sigma$
in the dual $\mathfrak a^*$ of $\mathfrak a$, and a positive multiplicity function
$m$ on $\Sigma$. A commuting family $\D=\D(\mathfrak a,\Sigma,m)$
of differential operators on $\mathfrak a$ is associated with this triple.
The hypergeometric functions of Heckman and Opdam are joint eigenfunctions of $\D$.
For certain values of the multiplicity function, the triple
$(\mathfrak{a}, \Sigma, m)$ indeed arises from a Riemannian symmetric
space of the noncompact type $G/K$. In this case, $\D$ coincides with
the algebra of radial components of the $G$-invariant differential operators
on $G/K$, and Heckman-Opdam's hypergeometric functions are the
restrictions to $\mathfrak a$ of Harish-Chandra's elementary spherical
functions on $G/K$.
Heckman-Opdam's theory of hypergeometric functions associated with
root systems underwent an important development with the
discovery of Cherednik operators (see \cite{Cherednik}, \cite{OpdamActa},
\cite{Opd00} and references therein). The Cherednik operators
(also called Dunkl-Cherednik
operators or trigonometric Dunkl operators, as they are the curved analogue
of the Dunkl operators on $\mathbb R^n$) are a commuting family
of first order differential-reflection operators. They allow to construct
algebraically all elements of $\D$.

Let $W$ denote the Weyl group of $\Sigma$, and let $C_c^\infty(\mathfrak{a})^W$
be the space of compactly-supported $W$-invariant smooth functions on $\mathfrak a$.
The spectral decomposition of $\D$ on $C_c^\infty(\mathfrak{a})^W$ is obtained
by means of the hypergeometric Fourier transform.
Let  $\mathfrak a_\C^*$ be the complexified dual of $\mathfrak a$, and let
$\varphi_\l$ denote the Heckman-Opdam's hypergeometric function of spectral parameter
$\l$. The hypergeometric Fourier transform $\widehat{f}$ (or $\mathcal F f$) of a
sufficiently regular $W$-invariant function $f$ on $\mathfrak a$ is defined by
integration against the $\varphi_\l$'s:
\begin{equation} \label{eq:hypFourier}
(\mathcal F f)(\l)=\widehat{f}(\l)=\int_{\mathfrak a} f(x) \varphi_{\l}(x) \, d\mu(x)\,.
\end{equation}
Here $d\mu(x)$ is a suitable measure on $\mathfrak a$ attached to the triple $(\mathfrak a, \Sigma,m)$
(see subsection \ref{subsection:hypergeomFourier}).
In \cite{OpdamActa}, Opdam established the basic results in the $L^2$-harmonic analysis
of the hypergeometric Fourier transform: the Paley-Wiener theorem, characterizing
the image under $\mathcal F$ of  $C_c^\infty(\mathfrak{a})^W$, the inversion formula on
$C_c^\infty(\mathfrak{a})^W$, and the Plancherel theorem.
The $L^2$-Schwartz space analysis was studied by Schapira \cite{Schapira}. See also \cite{Delorme}, where
the case of negative multiplicities has been considered.
On the other hand, to our knowledge, the $L^p$-harmonic analysis of the hypergeometric Fourier
transform has not yet been developed for $p\neq 2$.
The difficulty of extending the classical $L^p$-results for the spherical Fourier transform to
the context of the hypergeometric Fourier transform is related to the fact that several tools
coming from the geometry of the symmetric spaces are now missing. An example is Harish-Chandra's
integral formula for the spherical functions.

In this paper, our main result is Theorem \ref{thm:bddhyp}, which characterizes the hypergeometric
functions of Heckman and Opdam that are bounded. It is the necessary step for studying the
holomorphic properties of the hypergeometric Fourier transform of $L^1$ functions.
Our result is a natural extension of the celebrated theorem of Helgason and Johnson \cite{HeJo}
characterizing the bounded spherical functions on a Riemannian symmetric space $G/K$ of the noncompact type.
Suppose the triple $(\mathfrak a,\Sigma,m)$ arises from $G/K$.
Let $\rho \in \mathfrak a^*$ be defined by (\ref{eq:rhom}), and let $C(\rho)$ denote
the convex hull of the finite set $\{w\rho:w\in W\}$.
The theorem of Helgason and Johnson states that the spherical function $\varphi_\l$ on
$G/K$ is bounded if and only if $\l$ belongs to the tube domain in $\mathfrak a_\C^*$ over
$C(\rho)$. Theorem \ref{thm:bddhyp} proves that this characterization extends to
Heckman-Opdam's hypergeometric functions $\varphi_\l$ associated with any triple
$(\mathfrak a,\Sigma,m)$. Some partial results in this direction have also
been obtained by R\"osler in \cite{Ro}, by methods that are different from ours.

Our principal tools are appropriate series expansions of Heckman
and Opdam hypergeometric functions  $\varphi_\l$ on a positive
Weyl chamber $\mathfrak a^+$ of $\mathfrak a$.

For generic values of the spectral parameter $\l$, the function $\varphi_\l$ is defined on $\mathfrak a^+$ as linear combination of the Harish-Chandra series $\Phi_{w\l}$ with $w \in W$:
\begin{equation} \label{eq:varphiHCseries}
\varphi_\l=\sum_{w \in W} c(w\l) \Phi_{w\l}\,,
\end{equation}
where $c$ denotes Harish-Chandra's $c$-function; see (\ref{eq:c}) and (\ref{eq:FfromPhi}). By construction, the Harish-Chandra series are exponential series on $\mathfrak a^+$. So, for generic $\l$'s, one immediately obtains an exponential series expansion for $\varphi_\l$. But this expansion does
not extend to all spectral parameters: its coefficients are meromorphic functions of $\l \in \mathfrak{a}_\C^*$ and
have singularities for non-generic $\l$. To get series expansions which hold for all values of the spectral parameter, one needs a different method.

The idea behind the method used in this paper appeared in the study of spherical
functions on Riemannian symmetric spaces of the noncompact type.
In this case, the spherical function $\varphi_0$ can be recovered from
$\varphi_\l$ with $\l$ near $0$. Indeed, there is a polynomial
$\pi(\l)$ and a positive constant $C$ so that on $\mathfrak{a}$ we have
\begin{equation}
\label{eq:varphi0HC}
\varphi_0= C \partial(\pi) \big(\pi(\l) \varphi_\l)|_{\l=0},
\end{equation}
In (\ref{eq:varphi0HC}), $\partial(\pi)$ is the constant coefficient differential operator on $\mathfrak{a}$ canonically associated with $\pi$; see (\ref{eq:pi}) and section  \ref{subsection:hypsystem} for the precise definitions of $\pi$ and $\partial(\pi)$, respectively.
Formula (\ref{eq:varphi0HC}) originated in the work of Harish-Chandra (see e.g. \cite{GV}, p.
165 and references therein). It  was applied by Anker \cite{AnkerCRAS} to obtain an exponential series expansion and sharp estimates for Harish-Chandra's spherical functions $\varphi_0$.
In \cite{Schapira}, Schapira proved that the same
formula holds for the Heckman-Opdam's hypergeometric function
$\varphi_0$ and extended Anker's results to this case.
More precisely, the expansion of $\varphi_0$ comes from the explicit computation of the
right-hand side of  (\ref{eq:varphi0HC}) after substituting $\varphi_\l$ with
its exponential series expansion (\ref{eq:varphiHCseries}) for generic
$\l$'s. The point is that the polynomial $\pi(\l)$ cancels
all singularities near $\l=0$ of the meromorphic coefficients of the
expansion of $\varphi_\l$ (and it is the minimal polynomial having this property).

Our first step is Proposition \ref{prop:asymptoticsF-beginning}, where we prove an analog of (\ref{eq:varphi0HC}) for the Heckman-Opdam's hypergeometric functions of arbitrary spectral parameters. This requires a precise knowledge of the singularities of the coefficients of the exponential series expansion (\ref{eq:varphiHCseries}). For every fixed $\l_0 \in \mathfrak a_\C^*$ we find a
polynomial $p(\l)$ (depending on $\l_0$ and minimal in a suitable sense) so that multiplication by
$p(\l)$ cancels all singularities of the meromorphic coefficients of the exponential series expansion (\ref{eq:varphiHCseries}) in a neighborhood of $\l_0$. Notice that the analysis needed for $\l_0$
arbitrary is more delicate than the one for $\l_0=0$. Indeed, near $\l_0=0$,  the coefficients of the Harish-Chandra series are holomorphic. Consequently, only the singularities of Harish-Chandra's
$c$-function play a role. They are located along the root hyperplanes, hence along a Weyl-group invariant finite family of hyperplanes through $0$. On the other hand, near an arbitrary point $\l_0$,
one has to consider the singularities of the $c$-function, those of the coefficients of the
Harish-Chandra series, and one also needs to understand how they transform under the
action of the Weyl group. The polynomial $p(\l)$ is therefore a product of factors which take into account all these different contributions; see (\ref{eq:p}). The operator
$\partial(\pi)$ occurring in the analog of  (\ref{eq:varphi0HC}) for $\l_0$ is the constant coefficient differential operator associated with the highest order term $\pi(\l)$ of $p(\l)$. The general version
of  (\ref{eq:varphi0HC}),
\begin{equation}
\label{eq:HOspherical-diffop}
\varphi_{\l_0}=C \partial(\pi) \big(p(\l) \varphi_\l)|_{\l=\l_0}\,,
\end{equation}
is thus built using two polynomials, $p(\l)$ and $\pi(\l)$, which agree in the
very special case $\l_0=0$. The right-hand side of (\ref{eq:HOspherical-diffop}), together with the exponential expansion for generic $\l$'s, allows us to calculate the exponential series expansion of
$\varphi_{\l_0}$; see Theorem \ref{thm:HCseriesF} and Corollary \ref{cor:HCseriesF}.
 Some applications of the series expansions are then obtained in sections \ref{section:estimates} and \ref{section:bddhyp}.

Besides the characterization of the set of $\l$'s for which the
hypergeometric function $\varphi_\l$ is bounded, this article contains the following results in the asymptotic analysis of the Heckman-Opdam's hypergeometric functions $\varphi_\l$:
\begin{enumerate}
\item An exponential series expansion for $\varphi_\lambda$, even when
$\lambda \in \mathfrak a_\C^*$ is not generic (Theorem \ref{thm:HCseriesF}).
\item Estimates for $\varphi_\lambda$ away from the walls of the Weyl chambers for all
$\lambda \in \mathfrak{a}_{\mathbb C}^{*}$ (Theorem \ref{thm:leading-termF})
\item Sharp estimates for $\varphi_\lambda$ when $\lambda\in \mathfrak a^*$ (Theorem \ref{thm:real}).
\end{enumerate}

The sharp estimates of Theorem \ref{thm:real} were stated without proof in \cite[Remark 3.1]{Schapira}.

In the last section of the paper we develop the $L^p$-theory for
the hypergeometric Fourier transform for $0<p<2$. Using Theorem
\ref{thm:bddhyp}, we study the holomorphic properties of the
hypergeometric Fourier transform on $L^p(\fa,d\mu)^W$ when
$1\leq p<2$. We prove the Hausdorff-Young inequalities and the
Riemann-Lebesgue lemma. We also establish injectivity and an
inversion formula for the hypergeometric Fourier transform.  Then,
by an easy generalization of Anker's results \cite{AnkerJFA} for
the $L^p$-spherical Fourier transform on Riemannian symmetric
spaces, we prove an $L^p$-Schwartz space isomorphism theorem for
$0<p\leq 2$ (see Theorem \ref{Schwartz-space-iso}).

\section{Notation and preliminaries} \label{section:notation}

We shall use the standard notation $\N$, $\N_0$, $\Z$, $\R$ and
$\C$ for the positive integers, the nonnegative integers, the
integers, the reals and the complex numbers. The symbol $A\cup B$
denotes the union of $A$ and $B$, whereas $A \sqcup B$ indicates
their disjoint union. Given two nonnegative functions $f$ and $g$
on a domain $D$, we write $f \asymp g$ if there exists positive
constants $C_1$ and $C_2$ so that $C_1 g(x)\leq f(x) \leq C_2
g(x)$ for all $x \in D$.

Let $\frak a$ be an $l$-dimensional real Euclidean vector space
with inner product $\inner{\cdot}{\cdot}$, and let $\frak a^*$ be
the dual space of $\frak a$. For $\l\in \frak a^*$ let $x_\l \in
\frak a$ be determined by
$\l(x)=\inner{x}{x_\l}$ for all $x \in \frak a$. The assignment
$\inner{\l}{\mu}:=\inner{x_\l}{x_\mu}$ defines an inner product in
$\frak a^*$. Let $\frak a_\C$ and $\frak a_\C^*$
denote the complexifications of $\frak a$ and $\frak a^*$, respectively.
The $\C$-bilinear extension to $\frak a_\C$ and $\frak a_\C^*$ of the
inner products on $\frak a^*$ and $\frak a$ will also be denoted
by $\inner{\cdot}{\cdot}$. We shall often employ the notation
\begin{equation}\label{eq:la}
  \la:=\frac{\inner{\l}{\a}}{\inner{\a}{\a}}.
\end{equation}
We shall also set $|x|=\inner{x}{x}^{1/2}$ for $x \in \frak a$.

Let $\Sigma$ be a (possibly nonreduced) root system in $\frak a^*$
with associated Weyl group $W$. For $\a \in \Sigma$, we denote by
$r_\a$ the reflection $\l \mapsto \l-2 \la \a$ in $\frak a^*$. For
a set $\Sigma^+$ of positive roots in $\Sigma$, let
$\Pi=\{\a_1,\dots,\a_l\} \subset \Sigma^+$ denote the
corresponding set of simple roots. We denote by $\Sigma_0$ the
indivisible roots in $\Sigma$: if $\a \in \Sigma_0$, then $\a/2
\notin \Sigma$. We set $\Sigma_0^+=\Sigma^+ \cap \Sigma_0$.

A positive multiplicity function on $\Sigma$ is a $W$-invariant function
$m:\Sigma\rightarrow ]0,+\infty[$. Setting $m_\a:=m(\a)$ for $\a \in \Sigma$,
we therefore have $m_{w\a}=m_\a$ for all $w \in W$.
We extend $m$ to $\frak a^*$ by putting $m_\a=0$
for $\a \notin \Sigma$.
We say that a multiplicity function $m$ is \emph{geometric} if there
 is a Riemannian symmetric space of noncompact type $G/K$
with restricted root system $\Sigma$ such that $m_\a$ is the multiplicity
of the root $\a$ for all $\a\in \Sigma$. Otherwise, $m$ is said to be
\emph{non-geometric}.

The dimension $l$ of $\frak a$ will also be called the \emph{(real) rank} of
the triple $(\frak a, \Sigma,m)$.

In this paper we adopt the notation commonly used in the theory of symmetric spaces. It differs from the
notation in the work of Heckman and Opdam in the following ways. The root system $R$ and the multiplicity
function $k$ used by Heckman and Opdam
are related to our $\Sigma$ and $m$ by the relations $R=\{2\a:\a \in \Sigma\}$ and $k_{2\a}=m_\a/2$
for $\a \in \Sigma$.

We view $\frak a_\C$ of $\frak a$ as the Lie algebra of the
complex torus $A_\C:= \frak a_\C / \Z\{2\pi i x_\a/\inner{\a}{\a}:
\a \in \Sigma\}$. We write $\exp: \frak a_\C \rightarrow A_\C$ for
the exponential map, with multi-valued inverse $\log$. The split
real form $A:=\exp \frak a $ of $A_\C$ is an abelian subgroup with
Lie algebra $\frak a$ such that $\exp: \frak a \rightarrow A$ is a
diffeomorphism. In the following, to simplify the notation, we
shall identify $A$ with $\mathfrak a$ by means of this
diffeomorphism.

The action of $W$ extends to $\frak a$ by duality,
to $\frakacs$ and $\frak a_\C$ by $\C$-linearity. Moreover, $W$ acts on functions $f$ on any of these spaces
by $(wf)(x):=f(w^{-1}x)$, $w \in W$.

The positive Weyl chamber $\frak a^+$ consists of the elements  $x\in\frak a$ for which
$\a(x)>0$ for all $\a \in \Sigma^+$; its closure is
$\overline{\frak a^+}=\{x \in \frak a: \text{$\a(x) \geq 0$ for all $\a \in \Sigma^+$}\}$.
Dually, the positive Weyl chamber $(\frak a^*)^+$ consists of the elements  $\l\in\frak a^*$ for which
$\inner{\l}{\a}>0$ for all $\a \in \Sigma^+$. Its closure is denoted $\overline{(\frak a^*)^+}$.
We write $\l \leq \mu$ if $\l, \mu \in \mathfrak a^*$ and $\mu-\l \in \overline{(\frak a^*)^+}$.
The sets $\overline{\frak a^+}$ and $\overline{(\frak a^*)^+}$
are fundamental domains for the action of $W$ on $\frak a$ and $\frak a^*$, respectively.

The restricted weight lattice of $\Sigma$ is
  $P=\{\l \in \frak a^*:\la\in\Z \quad \text{for all $\a \in\Sigma$}\}.$
Observe that $\{2\a:\a\in \Sigma\} \subset P$.
If $\l \in P$, then the exponential $e^\l:A_\C\rightarrow \C$ given
by $e^\l(h):=e^{\l(\log h)}$ is single valued. The $e^\l$ are the algebraic characters
of $A_\C$. Their $\C$-linear span coincides with the ring
of regular functions $\C[A_\C]$ on the affine algebraic variety $A_\C$.
The lattice $P$ is $W$-invariant, and the Weyl group acts on $\C[A_\C]$
according to $w(e^\l):=e^{w\l}$.
The set $\Hreg:=\{h \in A_\C: e^{2\a(\log h)}\neq 1 \ \text{for all $\a \in \Sigma$}\}$
consists of the regular points of $A_\C$ for the action of $W$. Notice that $A^+ \equiv \mathfrak a^+$ is a
subset of $\Hreg$. The algebra $\C[\Hreg]$ of regular functions on $\Hreg$
is the subalgebra of the quotient field of $\C[A_\C]$ generated by
$\C[A_\C]$ and by $1/(1-e^{-2\a})$ for $\a \in \Sigma^+$.
Its $W$-invariant elements form the subalgebra $\C[\Hreg]^W$.

\subsection{Cherednik operators and the hypergeometric system} \label{subsection:hypsystem}
In this subsection we outline the theory of hypergeometric
differential equations associated with root systems. This theory
has been developed by Heckman, Opdam and Cherednik. We refer the
reader to \cite{Cherednik}, \cite{HeckBou}, \cite{HS},
\cite{OpdamActa}, \cite{Opd00} for more details and further
references.

Let $\polya$ denote the symmetric algebra over $\frak a_\C$ considered as the
space of polynomial functions on $\frakacs$, and let $\polya^W$ be the
subalgebra of $W$-invariant elements.
Every $p \in \polya$ defines a
constant-coefficient differential operators $\partial(p)$ on
$A_\C$ and on $\frak a_\C$
such that $\partial(x)=\partial_x$ is the directional derivative in the
direction of $x$ for all $x \in \frak a$.
The algebra of the differential operators $\partial(p)$
with $p \in \polya$ will also be indicated by $\polya$.
Let  $\D(\Hreg):=\C[\Hreg]\otimes \polya$
denote the algebra of differential operators on $A_\C$ with coefficients
in $\C[\Hreg]$.
The Weyl group $W$ acts on $\D(\Hreg)$ according to
\begin{equation*}
  w\big(\phi\otimes \partial(p)\big):=w\phi \otimes \partial(wp).
\end{equation*}
We write $\D(\Hreg)^W$ for the subspace of $W$-invariant elements.
The space
$\D(\Hreg) \otimes \C[W]$ can be endowed with the structure of an associative algebra
with respect to the product
\begin{equation*}
 (D_1 \otimes w_1)\cdot (D_2 \otimes w_2)=D_1w_1(D_2) \otimes w_1w_2,
\end{equation*}
where the action of $W$ on differential operators is defined by
$(wD)(wf):=w(Df)$ for every sufficiently differentiable function
$f$. It is also a left $\C[\Hreg]$-module. Considering $D \in
\D(\Hreg)$ as an element of $\D(\Hreg) \otimes \C[W]$, we shall
usually write $D$ instead of $D \otimes 1$. The elements of the
algebra $\D(\Hreg) \otimes \C[W]$ are called the
differential-reflection operators on $\Hreg$. The
differential-reflection operators act on functions $f$ on $\Hreg$
according to $(D\otimes w)f:=D(wf)$.

For $x \in \frak a$ the \emph{Cherednik operator\/} (or Dunkl-Cherednik operator\/)  $T_x\in \D(\Hreg) \otimes \C[W]$
is defined by
\begin{equation*}
  T_x:=
\partial_x-\rho(x)+\sum_{\a\in \Sigma^+} m_\a \a(x) (1-e^{-2\a})^{-1}
\otimes (1-r_\a)
\end{equation*}
where
\begin{equation}
  \label{eq:rhom}
  \rho:=\frac{1}{2} \sum_{\a \in \Sigma^+} m_\a \a  \in \frak a^*.
\end{equation}

The Cherednik operators can also be considered as operators acting
on smooth functions on $\frak a$. This is possible because, as can
be seen from the Taylor formula, the term $1-r_\a$ cancels the
apparent singularity arising from the denominator $1-e^{-2\a}$.
The Cherednik operators commute with each other;  cf.
\cite[Section 2]{OpdamActa}. Therefore the map  $x \mapsto T_x$
on $\frak a$ extends uniquely to an algebra homomorphism $p
\mapsto T_p$ of $\polya$ into $\D(\Hreg) \otimes \C[W]$.

Define a linear map $\Upsilon:\D(\Hreg)\otimes \C[W] \rightarrow \D(\Hreg)$ by
\begin{equation*}
  \Upsilon(\sum_j D_j \otimes w_j):=\sum_j D_j.
\end{equation*}
Then $\Upsilon(Q)f=Qf$ for all  $Q \in \D(\Hreg) \otimes \C[W]$ and all $W$-invariant $f$
on $\Hreg$.

For $p \in \polya$ we set
$D_p:=\Upsilon\big(T_p\big)$. If $p \in \polya^W$, then
$D_p\in \D(\Hreg)^W$; see \cite[Theorem 2.12(2)]{OpdamActa}.
The algebra
\begin{equation*}
\D=\D(\frak a, \Sigma,m):=\{D_p:p \in \polya^W\}
\end{equation*}
is a commutative subalgebra of $\D(\Hreg)^W$.
It is called the algebra of
hypergeometric differential operators associated with $(\frak a,\Sigma,m)$.
It is the analogue,
for arbitrary multiplicity functions, of the commutative
algebra of the radial components on $A=\exp \mathfrak a$ of the
invariant differential operators on a Riemannian symmetric space of
noncompact type.

A remarkable element of $\D$ corresponds to the polynomial $p_L \in \polya^W$ defined by
$p_L(\l):=\inner{\l}{\l}$ for $\l \in \frakacs$.
Then
\begin{equation*}
   D_{p_L} =L+ \inner{\rho}{\rho},
\end{equation*}
  where
  \begin{equation}
    \label{eq:Laplm}
  L:=L_{\mathfrak a}+\sum_{\a \in \Sigma^+} m_\a \,\coth\a \;
       \partial_\a
  \end{equation}
and $L_{\mathfrak a}$ is the Laplace operator on $\mathfrak a$; 
see \cite[Theorem 2.2]{HeckBou}. In (\ref{eq:Laplm}) we have set $\partial_\a:=\partial(x_\a)$
and
\begin{equation*}
   \coth \a:=\frac{1+e^{-2\a}}{1-e^{-2\a}}.
\end{equation*}
If $(\mathfrak a,\Sigma,m)$ is geometric, then $L$ coincides with the radial component on $\mathfrak a\equiv A$ with respect to the left action of $K$ of the Laplace operator on a Riemannian symmetric space $G/K$ of noncompact type.

The map $\gamma: \D\rightarrow \polya^W$ defined by
\begin{equation}
  \label{eq:HChomo}
\gamma\big(D_p\big)(\l):=p(\l)
\end{equation}
is called the Harish-Chandra homomorphism.
It defines an algebra isomorphism of $\D$ onto $\polya^W$ (see \cite[Theorem 1.3.12 and Remark 1.3.14]{HS}). 
From Chevalley's theorem it therefore follows that
$\D$ is generated by $l (=\dim \frak a)$ elements.

Let $\l \in \frakacs$ be fixed. The system of differential equations
\begin{equation}
  \label{eq:hypereq}
  D_p \varphi=p(\l)\varphi, \qquad p \in \polya^W,
\end{equation}
is called the hypergeometric system of differential equations with spectral parameter
$\l$ associated with the data $(\frak a,\Sigma,m)$. The differential equation corresponding
to the polynomial $p_L$ is
\begin{equation}
  \label{eq:diffLaplm}
  L \varphi = \big(\inner{\l}{\l}-\inner{\rho}{\rho}\big)\varphi.
\end{equation}
For geometric multiplicities, the hypergeometric system (\ref{eq:hypereq}) agrees
with the system of differential equations defining Harish-Chandra's spherical function
of spectral parameter $\l$.

\begin{Ex}[The rank-one case] \label{ex:rank-one-diff}
The rank-one case corresponds to triples $(\frak a, \Sigma, m)$ in which
$\frak a$ is one dimensional. Then the set $\Sigma^+$ consists
at most of two elements: $\a$ and, possibly, $2\a$.
By setting $x_\a/2\equiv 1$ and $\a\equiv 1$,
we identify $\frak a$ and $\frak a^*$ with $\R$, and their complexifications
$\mathfrak a_\C$ and $\frakacs$ with $\C$.
The Weyl chamber $\frak a^+$ coincides with the half-line $]0,+\infty[$.
The Weyl group $W$ reduces to $\{-1,1\}$ acting on $\R$ and $\C$ by multiplication.
The algebra $\D$ is generated by
$D_{p_L}=L+\rho^2$. The hypergeometric differential
system with spectral parameter $\l \in \C$ is equivalent to the single Jacobi differential equation
\begin{equation} \label{eq:hyperalge}
\frac{d^2 \varphi}{dz^2}+\big(m_\a\coth z+m_{2\a}\coth(2z)\big)\; \frac{d\varphi}{dz}
=(\l^2-\rho^2)\varphi.
\end{equation}
The function $z \mapsto e^z$ maps $\frak a_\C\equiv \C$ onto $A_\C\equiv \C^\times$.
Hence $\Hreg \equiv \C \setminus \{0,\pm 1\}$. The change of variable $\zeta:=(1-\cosh z)/2$
transforms (\ref{eq:hyperalge}) into the hypergeometric differential equation
\begin{equation*}
  \zeta(1-\zeta) \frac{d^2 \psi}{d\zeta^2}+[c-(1+a+b)\zeta] \frac{d\psi}{d\zeta} -ab\, \zeta=0
\end{equation*}
with parameters
\begin{equation*}
  a=\frac{\l+\rho}{2}, \qquad b=\frac{-\l+\rho}{2}, \qquad c=\frac{m_\a+m_{2\a}+1}{2}.
 \end{equation*}
\end{Ex}

\subsection{The Harish-Chandra series} \label{subsection:HCseries}

As in the classical theory of spherical functions on Riemannian
symmetric spaces, the explicit expression of the differential
equation (\ref{eq:diffLaplm}) suggested to Heckman and Opdam
\cite{HOpd1} to look for solutions on $\mathfrak a^+$ of the hypergeometric
system (\ref{eq:hypereq}) with spectral parameter $\l$ which are
of the form
\begin{equation*}
\Phi_\l(x)=e^{(\l-\rho) (x)}
\sum_{\mu \in \Lambda} \Gamma_\mu(\l) e^{-\mu(x)},
\qquad x \in \mathfrak a^+.
\end{equation*}
Here $\Lambda:=\left\{\sum_{j=1}^l n_j \a_j: n_j \in \N_0\right\}$
is the positive semigroup generated by the fundamental system of simple roots
$\Pi:=\{\a_1,\dots, \a_l\}$ in $\Sigma^+$.
For $\mu \in \Lambda \setminus \{0\}$, the coefficients
$\Gamma_\mu(\l)$ are rational functions of $\l \in \frakacs$
determined from the recursion relations
\begin{equation}\label{eq:recursionGammamu}
\inner{\mu}{\mu-2\l} \Gamma_\mu(\l) = 2 \sum_{\a\in \Sigma^+} m_\a
 \sum_{\substack{k\in \N\\\mu -2k\a \in \Lambda}}
 \Gamma_{\mu-2k\a}(\l)  \inner{\mu+\rho-2k\a-\l}{\a},
\end{equation}
with initial condition $\Gamma_0(\l)=1$. They are derived by
formally inserting the series for $\Phi$ into the differential
equation (\ref{eq:diffLaplm}). Let $\ell(\mu):=\sum_{j=1}^l n_j$
denote the level of $\mu=\sum_{j=1}^l n_j\a_j \in \Lambda$. It is
easy to check by induction on $\ell(\mu)$ that the recursion
relations imply $\Gamma_\mu(\l)=0$ unless $\mu=\sum_{j=1}^l n_j
\alpha_j$ with $n_j \geq 0$ and $n_j$ even for all $j=1,\dots,l$.
Hence the function $\Phi_\l(x)$ is in fact a sum over $2\Lambda$,
that is
\begin{equation}  \label{eq:HCseries}
\Phi_\l(x)=e^{(\l-\rho)(x)}
\sum_{\mu \in 2\Lambda} \Gamma_\mu(\l) e^{-\mu(x)},
\qquad x \in \mathfrak a^+.
\end{equation}
The function $\Phi_\l(x)$ is called the
\textit{Harish-Chandra series}.

Let $\mu \in 2\Lambda\setminus \{0\}$ be fixed. A priori, the
relation (\ref{eq:recursionGammamu}) uniquely define the rational
function $\Gamma_\mu(\l)$ on $\frak a_\C^*$ provided
$\inner{\tau}{\tau-2\l}\neq 0$ for all $\tau \in 2\Lambda\setminus
\{0\}$ with $\tau \leq \mu$. Opdam proved that, in fact, many of
these singularities are removable. Correspondingly, many of the
apparent singularities of the Harish-Chandra series are removable
as well.

In the following we adopt the notation
\begin{equation} \label{eq:hyperplane}
\mathcal H_{\a,r}=\{\l \in \frak a_\C^*: \l_\a=r\}\,.
\end{equation}
We shall consider meromorphic functions $f$ on $\mathfrak a_\C^*$
with singularities on a locally finite (generically infinite)
family $\mathbf{H}$ of affine complex hyperplanes $\mathcal
H_{\a,r}$. We say that $f$ has \emph{at most a simple pole along
$\mathcal H_{\a,r}$} if the function $\l\mapsto (\l_\a -r) f(\l)$
extends holomorphically to a neighborhood of $\mathcal H_{\a,r}
\setminus \bigcup_{\mathcal{H} \in \mathbf{H}, \mathcal{H}\neq
\mathcal H_{\a,r}} \mathcal{H}$.

\begin{Thm} \label{thm:polesGamma-Phi}
\begin{enumerate}
\thmlist
\item
Let $\mu \in 2\Lambda\setminus \{0\}$. Then the rational function $\Gamma_\mu(\l)$ has at most simple poles
located along the hyperplanes
$\mathcal H_{\a,n}$
with $\a \in \Sigma_0^+$, $n\in \N$ and $2n\a \leq \mu$.
\item
There is a tubular neighborhood $U^+$ of $A^+=\exp \mathfrak a^+$ in $A_\C$ so that the Harish-Chandra
series $\Phi_\l(x)$ is a meromorphic function of $(\l,x)\in \frak a_\C^* \times U^+$ with at most
simple poles along hyperplanes of the form $\mathcal H_{\a,n}$ with $\a \in \Sigma_0^+$ and $n\in \N$.
\end{enumerate}
\end{Thm}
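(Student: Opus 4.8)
The plan is to prove (a) first, by induction on the level $\ell(\mu)$, and then derive (b) as a consequence together with a convergence estimate for the series. For part (a), I would argue as follows. Fix $\mu \in 2\Lambda \setminus \{0\}$ and examine the recursion \eqref{eq:recursionGammamu}. The factor $\inner{\mu}{\mu - 2\l}$ on the left vanishes exactly along the affine hyperplane $\{\l : \inner{\mu}{\mu - 2\l} = 0\}$. The first key observation is that this hyperplane can be rewritten: writing $\mu = 2\nu$ with $\nu \in \Lambda$, one has $\inner{\mu}{\mu - 2\l} = 4\inner{\nu}{\nu - \l}$, and if $\nu = \sum n_j \a_j$ this is a nontrivial affine-linear form in $\l$ whose zero set is \emph{not} in general one of the $\mathcal H_{\a,n}$. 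So the naive reading of the recursion would produce poles along the "wrong" hyperplanes. The heart of the argument (this is Opdam's observation, cf.\ \cite[\S4]{OpdamActa}, \cite{HS}) is that these are only \emph{apparent} singularities: along the hyperplane $\inner{\mu}{\mu - 2\l} = 0$ the right-hand side of \eqref{eq:recursionGammamu}, evaluated using the inductively known $\Gamma_{\mu - 2k\a}$, also vanishes to at least the order needed, so that the quotient is holomorphic there. One establishes this by a careful bookkeeping of the recursion restricted to that hyperplane, using the inductive hypothesis to control the poles of the terms $\Gamma_{\mu - 2k\a}(\l)$ and a cancellation identity coming from the fact that $\Phi_\l$ must actually satisfy \emph{all} the equations $D_p \varphi = p(\l)\varphi$, not merely \eqref{eq:diffLaplm}; the extra equations force the relations among the $\Gamma_\mu$ that produce the cancellation. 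The remaining, genuine poles then come only from the explicit denominators one is forced to introduce, and one checks these sit on $\mathcal H_{\a,n}$ with $\a \in \Sigma_0^+$, $n \in \N$, $2n\a \le \mu$, and are at most simple because each such denominator enters the recursion linearly and, by induction, the $\Gamma_{\mu-2k\a}$ contribute at most simple poles which do not reinforce (one verifies that two distinct $\mathcal H_{\a,n}$, $\mathcal H_{\a',n'}$ never coincide generically along a common sublocus in a way that squares a pole).

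For part (b), I would combine (a) with the standard convergence estimate for the Harish-Chandra series. After clearing denominators — multiply $\Phi_\l(x)$ by the locally finite product $\prod (\l_\a - n)$ over the hyperplanes $\mathcal H_{\a,n}$, $\a\in\Sigma_0^+$, $n\in\N$, which is justified because on any compact set only finitely many factors are needed — each cleared coefficient $\widetilde\Gamma_\mu(\l)$ is holomorphic in $\l$, and one needs a bound of the form $|\widetilde\Gamma_\mu(\l)| \le C(1 + \ell(\mu))^{d} \, (1+|\l|)^{N}$, uniformly for $\l$ in compact sets, with polynomial growth in $\ell(\mu)$. Such an estimate follows from the recursion \eqref{eq:recursionGammamu} by induction on the level, exactly as in the classical Gangolli/Harish-Chandra estimate (see \cite{GV}, \cite{HS}): the factor $1/\inner{\mu}{\mu-2\l}$ is, after clearing, bounded below away from its zero locus by something like $c\,\ell(\mu)$ for $\l$ in a fixed compact set, while the right-hand side has $O(\ell(\mu))$ terms each of size $O(\ell(\mu))$ times a lower-level coefficient. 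This gives an absolutely and locally uniformly convergent exponential series for $\bigl(\prod(\l_\a-n)\bigr)\Phi_\l(x)$ on $\frak a_\C^* \times U^+$, where $U^+$ is a tubular neighborhood of $A^+$ chosen small enough that $\Re\,\mu(x) \ge \delta\,\ell(\mu)$ for $x$ with $\exp x \in U^+$ and some $\delta > 0$ — this is where the geometry of $\frak a^+$ enters, since on $\frak a^+$ each simple root is positive, hence $\mu(x) \ge c\,\ell(\mu)\,|x|$ there, and this persists on a complex neighborhood. The limit is therefore holomorphic in $(\l,x)$, so $\Phi_\l(x)$ itself is meromorphic with poles contained in the $\mathcal H_{\a,n}$, and they are at most simple because each factor $(\l_\a - n)$ appeared to the first power.

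The main obstacle is clearly the cancellation argument in part (a): showing that the "spurious" zeros of $\inner{\mu}{\mu - 2\l}$ do not produce poles. This is genuinely delicate because it is not visible from the single equation \eqref{eq:diffLaplm} alone — one really must use that $\Phi_\l$ is an eigenfunction of the whole commutative algebra $\D$, equivalently that it is built from the Cherednik operators $T_x$, which is what pins down enough linear relations among the $\Gamma_\mu(\l)$ to force the numerator to vanish in step with the denominator. A clean way to organize this is to work not with $\Phi_\l$ but with the non-symmetric analogue (the Opdam–Cherednik $G$-function / basic eigenfunction of the $T_x$'s), prove the pole statement there by a cleaner recursion governed by the first-order operators $T_x$ directly, and then recover the statement for $\Gamma_\mu(\l)$ by symmetrization; alternatively one cites \cite[Corollary 2.8 and Proposition 4.5]{OpdamActa} for the holomorphy of the relevant normalization. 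I would also need to be careful in (b) that "locally finite" is used correctly — the family $\{\mathcal H_{\a,n}\}$ is locally finite in $\frak a_\C^*$, so on any compact subset only finitely many clearing factors are active and the infinite product makes sense as a holomorphic function there — and that the tube $U^+$ is taken independent of $\l$, which the level estimate allows since the bound on $\widetilde\Gamma_\mu$ was locally uniform in $\l$.
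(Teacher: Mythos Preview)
The paper does not prove this theorem: its entire proof reads ``This is Corollary 2.10 in \cite{Opd2}. See also \cite[Proposition 4.2.5]{HS} and \cite[Lemma 6.5]{Opd00}.'' So there is nothing to compare at the level of argument; the result is imported wholesale from Opdam's earlier work.

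Your plan is a reasonable outline of what that cited proof actually does. The central point you identify --- that the hyperplanes $\{\l:\inner{\mu}{\mu-2\l}=0\}$ coming naively from the recursion are only apparent singularities, and that removing them requires using the full commuting family $\D$ (equivalently, the Cherednik operators) rather than the single second-order equation \eqref{eq:diffLaplm} --- is exactly the substance of Opdam's argument in \cite{Opd2}. Your suggestion to pass through the non-symmetric eigenfunctions of the $T_x$ is also a legitimate route, closer to \cite{OpdamActa} and \cite{Opd00}. Two small corrections: first, the primary reference is \cite{Opd2} (the 1988 paper), not \cite{OpdamActa}; second, in your convergence discussion the inequality ``$\mu(x)\ge c\,\ell(\mu)\,|x|$'' is not right near the walls of $\frak a^+$ --- what one has is $\mu(x)\ge \ell(\mu)\,\min_j \a_j(\Re x)$, with no $|x|$ factor, and this is what forces $U^+$ to be a tube around the \emph{open} chamber. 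The estimate the paper actually records (Lemma \ref{lemma:conv-diffHCseries}) is the cruder exponential bound $|d(\l)\Gamma_\mu(\l)|\le M\,e^{\mu(x_0)}$, which suffices for local uniform convergence on sets of the form $x_0+\overline{\frak a^+}$; your polynomial-growth estimate is the sharper Gangolli-type bound and also works.
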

\begin{proof}
This is Corollary 2.10 in \cite{Opd2}. See also \cite[Proposition 4.2.5]{HS} and \cite[Lemma 6.5]{Opd00}.
\end{proof}

\begin{Rem}
The neighborhood $U^+$ in Theorem \ref{thm:polesGamma-Phi} can be chosen of the form $A^+U_0$ where
$U_0$ is a connected and simply connected neighborhood of $e=\exp 0$ in $T=\exp(i\frak a)$
so that the function $\log$ is single valued on it.
Then all functions $e^{(\l-\rho)(\log h)}$
(with $\l \in \frakacs$ and $h \in A_\C$) are single valued and holomorphic on $A^+U_0$.
\end{Rem}

The convergence of the Harish-Chandra series can be studied by estimating its coefficients.
We record the following result, which is due to Opdam (\cite[Lemma 2.1]{Opd2}; see also \cite[Lemma 4.4.2]{HS}). It is an extension of the classical argument by Helgason in \cite{HelPW}, Lemma 4.1;
see also \cite[Ch. IV, Lemma 5.3]{He2}. We state it in a slightly modified form
(fixed multiplicity function and variable Weyl group element), which is more
suitable to our purposes.
The last part of the lemma is a consequence of the first part and Cauchy's integral
formula.

\begin{Lemma}  \label{lemma:conv-diffHCseries}
Let $U \subset \frak a_\C^*$ be an open set with compact closure $\overline{U}$, and let $w \in W$.
Let $d(\l)$ be a holomorphic function such that $d(\l)\Gamma_\mu(w\l)$ is holomorphic on $\overline{U}$ and all
$\mu \in 2\Lambda\setminus \{0\}$. Let $x_0 \in \frak a^+$ be fixed. Then there is a constant $M_{U,x_0}$ such that
$$|d(\l)\Gamma_\mu(w\lambda)|\leq M_{U,x_0} e^{\mu(x_0)}\,$$
for all $\mu \in 2\Lambda$ and $\l\in U$.
Hence the series
\begin{equation} \label{eq:modHCseries}
e^{(w\l-\rho)(x)} \sum_{\mu \in 2\Lambda} d(\l)\Gamma_\mu(w\l) e^{-\mu(x)}
\end{equation}
converges absolutely and uniformly in $(\l,x) \in U \times (x_0+ \overline{\frak a^+})$ to
$d(\l)\Phi_{w\l}(x)$.
Furthermore, for every $p\in \polya$ there is a constant $M_{p,U,x_0}$ such that
$$\Big|\partial(p)\Big(d(\l)\Gamma_\mu(w\lambda)\Big)\Big|\leq M_{p,U,x_0} e^{\mu(x_0)}\,$$
for all $\mu \in 2\Lambda$ and $\l\in U$. The series
(\ref{eq:modHCseries}) can therefore be differentiated
term-by-term and the differentiated series converges absolutely
and uniformly in $(\l,x) \in U \times (x_0+ \overline{\frak a^+})$
to $\partial(p)\big(d(\l)\Phi_{w\l}(x)\big)$.
\end{Lemma}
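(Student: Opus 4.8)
The plan is to adapt Helgason's classical argument (\cite[Lemma 4.1]{HelPW}) to the present situation. Set $\gamma_\mu(\l):=d(\l)\Gamma_\mu(w\l)$ for $\mu\in 2\Lambda$, so that by hypothesis every $\gamma_\mu$ is holomorphic on $\overline{U}$, with $\gamma_0\equiv d$. Substituting $w\l$ for $\l$ in the recursion (\ref{eq:recursionGammamu}) and multiplying by $d(\l)$ shows that the $\gamma_\mu$ obey the \emph{same} recursion,
\begin{equation*}
\inner{\mu}{\mu-2w\l}\,\gamma_\mu(\l)=2\sum_{\a\in\Sigma^+}m_\a\sum_{\substack{k\in\N\\ \mu-2k\a\in\Lambda}}\gamma_{\mu-2k\a}(\l)\,\inner{\mu+\rho-2k\a-w\l}{\a}\,.
\end{equation*}
The heart of the matter is the uniform coefficient estimate $|\gamma_\mu(\l)|\le M_{U,x_0}\,e^{\mu(x_0)}$ for all $\l\in U$ and all $\mu\in 2\Lambda$, which I would prove by (strong) induction on the level $\ell(\mu)$; the two convergence assertions and the bounds on $\partial(p)$ then follow by comparison with a geometric series and by Cauchy's integral formula, respectively.

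For the induction I would first record the following elementary inputs, valid with constants depending only on $(\mathfrak a,\Sigma,m)$ and on $\overline{U}$: since $\overline{U}$ is bounded, $|\inner{\mu}{\mu-2w\l}|\ge\tfrac12|\mu|^2$ for all $\l\in\overline{U}$ once $|\mu|$ is large; positive-definiteness of the Gram matrix of the simple roots makes $|\mu|$ comparable to $\ell(\mu)$ on $\Lambda$; when $2k\a\le\mu$ one has $2k\le\ell(\mu)$, hence $|\inner{\mu+\rho-2k\a-w\l}{\a}|\le C\,|\mu|$ uniformly in $\mu$, $k$ and $\l\in\overline{U}$; and, because $x_0\in\mathfrak a^+$, $\a(x_0)>0$ for every $\a\in\Sigma^+$, so $\mu(x_0)\ge c_0\,\ell(\mu)$ with $c_0:=\min_j\a_j(x_0)>0$ and $S(x_0):=\sum_{\a\in\Sigma^+}m_\a\sum_{k\ge1}e^{-2k\a(x_0)}<\infty$. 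Combining the first three inputs, the recursion yields $|\gamma_\mu(\l)|\le\frac{C'}{|\mu|}\sum_{\a\in\Sigma^+}m_\a\sum_k|\gamma_{\mu-2k\a}(\l)|$ for $|\mu|$ large; feeding in the inductive hypothesis $|\gamma_\nu(\l)|\le M\,e^{\nu(x_0)}$ for $\ell(\nu)<\ell(\mu)$ and factoring out $e^{\mu(x_0)}$ gives
\begin{equation*}
|\gamma_\mu(\l)|\le\frac{C'}{|\mu|}\,\sum_{\a\in\Sigma^+}m_\a\sum_{k\ge1}M\,e^{(\mu-2k\a)(x_0)}\le\frac{C'S(x_0)}{|\mu|}\,M\,e^{\mu(x_0)}\le M\,e^{\mu(x_0)}
\end{equation*}
as soon as $|\mu|\ge C'S(x_0)$. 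For the finitely many remaining $\mu$ each $\gamma_\mu$ is bounded on the compact set $\overline{U}$, and one enlarges $M$ to dominate these (using $e^{\mu(x_0)}\ge1$); since $\ell(\mu-2k\a)<\ell(\mu)$ for $k\ge1$, the induction closes. This is the step I expect to be the main obstacle: no single estimate is difficult, but the bound must be uniform over \emph{all} of $2\Lambda$ at once, which succeeds only because, with the factor $e^{\mu(x_0)}$ built into the ansatz, the recursion becomes self-improving once $C'S(x_0)/|\mu|\le1$.

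For the convergence statements I note that the coefficient estimate holds with any point of $\mathfrak a^+$ in place of $x_0$, the constant depending on that point; I would apply it at $x_0/2\in\mathfrak a^+$. For $x\in x_0+\overline{\mathfrak a^+}$ one has $\mu(x)\ge\mu(x_0)=2\mu(x_0/2)$, hence
\begin{equation*}
|d(\l)\Gamma_\mu(w\l)|\,e^{-\mu(x)}\le M_{U,x_0/2}\,e^{\mu(x_0/2)-\mu(x)}\le M_{U,x_0/2}\,e^{-\mu(x_0/2)}\le M_{U,x_0/2}\,e^{-c\,\ell(\mu)}\,,
\end{equation*}
with $c:=\tfrac12\min_j\a_j(x_0)>0$. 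Since $2\Lambda\subseteq\Lambda$, we have $\sum_{\mu\in 2\Lambda}e^{-c\,\ell(\mu)}\le(1-e^{-c})^{-l}<\infty$, so the series $\sum_{\mu\in 2\Lambda}d(\l)\Gamma_\mu(w\l)e^{-\mu(x)}$ converges absolutely and uniformly on $U\times(x_0+\overline{\mathfrak a^+})$; multiplying by the prefactor $e^{(w\l-\rho)(x)}$ and comparing with (\ref{eq:HCseries}) identifies its sum with $d(\l)\Phi_{w\l}(x)$.

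Finally, for the derivative bounds I would invoke Theorem \ref{thm:polesGamma-Phi}: the singularities of all the $\Gamma_\mu(w\cdot)$ lie on a single locally finite family of hyperplanes and are at most simple, so only finitely many of them meet the compact set $\overline{U}$, and the identity theorem applied along those hyperplanes shows that the hypothesis on $d$ forces a single open set $U'\supset\overline{U}$, with $\overline{U'}$ compact, on which every $d(\cdot)\Gamma_\mu(w\cdot)$ is holomorphic. Applying the coefficient estimate on $U'$ and then Cauchy's integral formula on small polydiscs about the points of $\overline{U}$ gives, for each $p\in\polya$, a constant $C_p$ with $|\partial(p)\gamma_\mu(\l)|\le C_p\sup_{U'}|\gamma_\mu|\le C_p\,M_{U',x_0}\,e^{\mu(x_0)}$ on $U$, which is the asserted bound with $M_{p,U,x_0}:=C_p\,M_{U',x_0}$. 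Running the convergence argument of the previous paragraph with this estimate, and using the Leibniz rule to distribute $\partial(p)$ (differentiation in $\l$) between $d(\l)\Gamma_\mu(w\l)$ and the prefactor $e^{(w\l-\rho)(x)}$ (which is smooth and locally bounded in $(\l,x)$), shows that the differentiated series converges absolutely and uniformly on $U\times(x_0+\overline{\mathfrak a^+})$; hence term-by-term differentiation is legitimate and its result is $\partial(p)\bigl(d(\l)\Phi_{w\l}(x)\bigr)$.
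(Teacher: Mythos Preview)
Your proposal is correct and follows exactly the approach the paper indicates: the paper does not write out a proof but simply records the lemma as a slight modification of Opdam's result \cite[Lemma 2.1]{Opd2}, itself an extension of Helgason's classical argument \cite[Lemma 4.1]{HelPW}, and states that ``the last part of the lemma is a consequence of the first part and Cauchy's integral formula.'' Your induction on $\ell(\mu)$ via the recursion (\ref{eq:recursionGammamu}), together with the $x_0/2$ trick for convergence and the passage to a slightly larger $U'$ plus Cauchy estimates for the derivative bound, is precisely this argument made explicit.
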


Notice that the explicit expression of the holomorphic function $d$ is not relevant.
We choose a specific function in Proposition \ref{prop:asymptoticsF-beginning}.

As in the Riemannian case, the Harish-Chandra series can be used to build a basis for the smooth
solutions on $\mathfrak a^+$ of the entire hypergeometric system with spectral parameter $\l$.
This is possible when $\l \in \frakacs$ is generic.

\begin{Def} \label{def:generic}
We say that $\l \in \frakacs$ is \emph{generic} if $\la \notin \Z$ for all $\a \in \Sigma_0$.
\end{Def}
Notice that, since $\l_\a=2\l_{2\a}$, the element $\l \in \frakacs$ is generic if and only if $\la \notin \Z$
for all $\a \in \Sigma$.
Moreover, since $(r_\b\l)_\a=\l_{r_\b\a}$ for all $\a, \b \in \Sigma$, the set of generic elements in $\frakacs$ is $W$-invariant.

\begin{Thm} \label{cor:thm}
Let $U^+$ be the tubular neighborhood of $A^+$ from Theorem \ref{thm:polesGamma-Phi}.
If $\l \in \frakacs$ is generic, then the set $\{\Phi_{w\l}(x): w\in W\}$ is a basis of the
solution space on $U^+$ of the hypergeometric
system (\ref{eq:hypereq}) with spectral parameter $\l$.
\end{Thm}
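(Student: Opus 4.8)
The plan is to prove three statements and combine them: (i) for generic $\l$, each $\Phi_{w\l}$ ($w\in W$) is a solution on $U^+$ of the hypergeometric system (\ref{eq:hypereq}) with spectral parameter $\l$; (ii) the solution space of (\ref{eq:hypereq}) on $U^+$ has dimension at most $|W|$; (iii) for generic $\l$ the functions $\Phi_{w\l}$, $w\in W$, are linearly independent. These imply the assertion at once: by (i) and (iii) there are at least $|W|$ linearly independent solutions, so by (ii) the solution space has dimension exactly $|W|$ and the $\Phi_{w\l}$ form a basis. Statement (i) is immediate: by construction (and Theorem \ref{thm:polesGamma-Phi}, together with the $W$-invariance of the set of generic parameters) the series $\Phi_{w\l}$ converges on $U^+$, is holomorphic in $x$ there, and solves (\ref{eq:hypereq}) with spectral parameter $w\l$; but $p(w\l)=p(\l)$ for all $p\in\polya^W$, so the equations $D_p\varphi=p(w\l)\varphi$ and $D_p\varphi=p(\l)\varphi$ coincide, and $\Phi_{w\l}$ also solves the system with spectral parameter $\l$. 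For (ii) I would simply invoke the theorem of Heckman and Opdam that the hypergeometric system is holonomic of holonomic rank $|W|$ on $\Hreg$ — seen, e.g., through the Cherednik operators, which recast it as a first-order holonomic system of rank $|W|$ (see \cite{Opd2}, \cite{OpdamActa}, \cite{HS}) — whence the solution space on the connected, simply connected domain $U^+\subset\Hreg$ has dimension $\le|W|$ (in fact $=|W|$).

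The substance is in (iii). Suppose $\sum_{w\in W}c_w\Phi_{w\l}\equiv0$ on $U^+$ and let $W'=\{w:c_w\ne0\}$. Fix $x_0\in\mathfrak a^+$. By Lemma \ref{lemma:conv-diffHCseries}, each $\Phi_{w\l}$ equals its absolutely convergent expansion $e^{(w\l-\rho)(x)}\sum_{\mu\in2\Lambda}\Gamma_\mu(w\l)e^{-\mu(x)}$ on $x_0+\overline{\mathfrak a^+}$, and, by absolute convergence (note $|e^{-\mu(x)}|=e^{-\mu(\Re x)}$), on the whole tube $T^+:=(x_0+\overline{\mathfrak a^+})+i\mathfrak a$; the vanishing relation extends to $T^+$ by analytic continuation. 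Since $\l$ is generic its stabilizer in $W$ is trivial, so the points $w\l$, $w\in W$, are pairwise distinct. Choose $v_0\in\mathfrak a^+$ and $z\in\mathfrak a$ so that the real numbers $(\Re(w\l))(v_0)-(\Im(w\l))(z)$, $w\in W'$, are pairwise distinct: this is possible because for $w\ne w'$ the locus in $\mathfrak a\times\mathfrak a$ defined by $(\Re(w\l-w'\l))(v)=(\Im(w\l-w'\l))(u)$ is a proper linear subspace (its defining functional is nonzero, as $w\l\ne w'\l$), and finitely many proper subspaces cannot cover the nonempty open set $\mathfrak a^+\times\mathfrak a$. Let $w_1\in W'$ maximize $(\Re(w\l))(v_0)-(\Im(w\l))(z)$.

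Now put $x(t):=x_0+t(v_0+iz)$ for $t>0$; then $x(t)\in T^+$, and the real part of each exponent $(w\l-\rho-\mu)(x(t))$ (with $w\in W'$, $\mu\in2\Lambda$) grows linearly in $t$ with rate $(\Re(w\l))(v_0)-(\Im(w\l))(z)-\rho(v_0)-\mu(v_0)$. Since $v_0\in\mathfrak a^+$ forces $\mu(v_0)>0$ for $\mu\ne0$, and by the choice of $w_1$, this rate is strictly largest at the single pair $(w,\mu)=(w_1,0)$. Multiplying the identity $0=\sum_{w\in W'}\sum_{\mu\in2\Lambda}c_w\Gamma_\mu(w\l)e^{(w\l-\rho-\mu)(x(t))}$ by $e^{-(w_1\l-\rho)(x(t))}$ and letting $t\to+\infty$, the $(w_1,0)$-term tends to $c_{w_1}\Gamma_0(w_1\l)=c_{w_1}$, while every other term tends to $0$; the interchange of limit and summation is justified by the coefficient estimates $|\Gamma_\mu(w\l)|\le M\,e^{\mu(x_1)}$ of Lemma \ref{lemma:conv-diffHCseries} (with $x_1\in\mathfrak a^+$ chosen small), which dominate the tails by convergent geometric-type series. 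Hence $c_{w_1}=0$, contradicting $w_1\in W'$; so all $c_w=0$, and (iii) is proved.

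The main obstacle is the bookkeeping in (iii): extracting a single coefficient from an identically-vanishing, infinite, multi-exponential series. The pointedness of the semigroup $2\Lambda$ takes care of comparing a strand's leading exponential with the tail of that same strand, whereas the genericity of $\l$ — equivalently, distinctness of the $w\l$ — is what lets one separate different strands, using, in general, the imaginary directions available in the tube $T^+$ to distinguish strands whose exponents have coinciding real parts. Alternatively this can be packaged as a general uniqueness principle for convergent exponential series with exponents in a finite union of translates of a pointed, finitely generated semigroup. Step (ii) is ``hard'' only in that it rests on the holonomicity and the rank computation for the hypergeometric system, which I would quote from \cite{Opd2}, \cite{HS} rather than reprove.
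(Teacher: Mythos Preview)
Your argument is correct. The paper gives no proof of its own here, merely citing \cite[Corollary 4.2.6]{HS} and \cite[Theorem 6.7]{Opd00}, so there is nothing to compare against beyond the standard argument in those references, which is essentially your three-step scheme: each $\Phi_{w\l}$ solves the system, the system has holonomic rank $|W|$ on $\Hreg$, and the $\Phi_{w\l}$ are linearly independent by asymptotics. Your treatment of (iii) via a complex direction $v_0+iz$, chosen so that the real growth rates $\Re(w\l)(v_0)-\Im(w\l)(z)$ are pairwise distinct, is exactly what is needed when the $\Re(w\l)$ may coincide for distinct $w$; the dominated-convergence bookkeeping is sound once $x_1$ is taken with $x_0-x_1\in\mathfrak a^+$. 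One small caveat on (i): the recursion (\ref{eq:recursionGammamu}) only shows that $\Phi_\l$ solves the single equation (\ref{eq:diffLaplm}); that it solves the \emph{full} system (\ref{eq:hypereq}) is a further fact (established, e.g., in the references you already invoke for (ii), via the first-order Cherednik system or the commutativity of $\D$), and you are right to take it as part of the construction, as the paper does.
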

\begin{proof}
See \cite[Corollary 4.2.6]{HS} or \cite[Theorem 6.7]{Opd00}.
\end{proof}

\begin{Ex}[The rank-one case] \label{ex:Phirankone}
The solution of the Jacobi differential equation (\ref{eq:hyperalge}) on $(0,+\infty)$
that behaves asymptotically as $e^{(\l-\rho)t}$ for $t\rightarrow +\infty$
is
\begin{equation*}
  \label{eq:Phillrankone}
\Phi_{\l}(t)
=(2\sinh t)^{\l-\rho} \hyper{\frac{\rho-\l}{2}}{\frac{-m_\a/2+1-\l}{2}}{1-\l}{-\sinh^{-2}t},
\end{equation*}
where $\sideset{_{_2}}{_{_1}}{\mathop{F}}$ denotes the Gaussian hypergeometric function; see e.g. \cite[Chapter 2]{Er}.
The function $\Phi_{\l}(t)$ coincides with the Jacobi function of second kind
$\Phi^{(a,b)}_\nu(t)$ with parameters
$a=(m_\a+m_{2\a}-1)/2$, $b=(m_{2\a}-1)/2$ and $\nu=-i\l$
(see \cite[Section 2]{Koorn}).
\end{Ex}
\begin{Ex}[The complex case] \label{ex:complexPhi}
Let $m$ be geometric multiplicity of a root system $\Sigma$ which
is reduced (i.e. $2\a \notin \Sigma$ for every $\a \in \Sigma$).
If $m_\a=2$ for all $\a \in \Sigma$,  then $m$ corresponds to a
Riemannian symmetric space of the noncompact type $G/K$ with $G$
complex. The triple  $(\frak a, \Sigma, m)$ will be said to
correspond to a complex case.
In the complex case, we have
\begin{equation}
  \label{eq:Phicompl}
  \Phi_{\l}(x)=\Delta(x)^{-1}e^{\l(x)}.
\end{equation}
where
\begin{equation} \label{eq:Weylden}
\Delta:=\prod_{\a\in\Sigma^+}(e^\a-e^{-\a})
\end{equation}
is the Weyl denominator.
\end{Ex}

\subsection{The $\text{\itshape\lowercase{c}}$-function} \label{section:c}

For $\a \in \Sigma_0^+$ and $\l \in \frak a_\C^*$ we set
\begin{equation} \label{eq:calpha}
c_\a(\l)=
 \frac{2^{-\la} \; \Gamma(\la)}
{\Gamma\Big(\frac{\la}{2}+\frac{m_\a}{4}+\frac{1}{2}\Big)
\Gamma\Big(\frac{\la}{2}+\frac{m_\a}{4}+\frac{m_{2\a}}{2}\Big)}\,,
\end{equation}
where $\Gamma$ is the Euler gamma function. Harish-Chandra's
$c$-function is the meromorphic function on $\frak a_\C^*$ defined
by
\begin{equation}\label{eq:c}
c(\l)=c_{\text{\tiny HC}} \prod_{\a \in \Sigma_0^+} c_\a(\l)
\end{equation}
where $c_{\text{\tiny HC}}$ is a normalizing constant chosen so that $c(\rho)=1$.

Recall the notation $\mathcal H_{\a,r}:=\{\l \in \frakacs: \la=r\}$ from (\ref{eq:hyperplane}).
Observe that the equality $\mathcal H_{\a,n}=\mathcal H_{\b,m}$ with $\a,\b\in \Sigma_0^+$ and
$n,m \in \Z$ implies $\a=\b$ and $n=m$. From the singularities of the gamma function
we therefore obtain the following lemma.

\begin{Lemma}\label{lemma:poleszerosc}
The meromorphic function $c(\l)$ admits at most simple poles located along the hyperplanes
$$\mathcal H_{\a,-n}\qquad \text{with $\a \in \Sigma_0^+$ and $n\in\N_0$}\,.$$
The possible zeros of $c(\l)$ are located along the hyperplanes
\begin{align*}
&\mathcal H_{\a,-(m_\a/2+m_{2\a})-2n}\qquad \text{with  $\a \in \Sigma_0^+$ and $n\in\N_0$}\,,\\
&\mathcal H_{\a,-m_\a/2-1-2n}\qquad \text{with $\a \in \Sigma_0^+$ and $n\in\N_0$}\,.
\end{align*}
 \end{Lemma}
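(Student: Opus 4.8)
The plan is to read off the poles and zeros of $c(\l)$ directly from the product formula \eqref{eq:c}, reducing everything to the single-root factors $c_\a(\l)$ in \eqref{eq:calpha}. First I would recall the two basic facts about the Euler gamma function: $\Gamma(z)$ is holomorphic and nowhere zero on $\C$ except for simple poles at $z = 0, -1, -2, \dots$; equivalently, $1/\Gamma(z)$ is entire with simple zeros exactly at the nonpositive integers. Applying this to the numerator $\Gamma(\la)$ of $c_\a(\l)$, we see that $c_\a$ can only have poles where $\Gamma(\la)$ does, i.e. along $\la \in \{0, -1, -2, \dots\}$, which is precisely the family $\mathcal H_{\a,-n}$ with $n \in \N_0$; since $\Gamma$ has only simple poles and the two gamma factors in the denominator are holomorphic there (they do not simultaneously vanish — see below), these poles of $c_\a$ are at most simple. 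Conversely, $c_\a(\l) = 0$ can only occur where one of the two denominator factors $\Gamma\bigl(\tfrac{\la}{2} + \tfrac{m_\a}{4} + \tfrac12\bigr)$ or $\Gamma\bigl(\tfrac{\la}{2} + \tfrac{m_\a}{4} + \tfrac{m_{2\a}}{2}\bigr)$ has a pole, that is, where $1/\Gamma$ of the corresponding argument vanishes.

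For the zeros, solving $\tfrac{\la}{2} + \tfrac{m_\a}{4} + \tfrac{m_{2\a}}{2} = -n$ with $n \in \N_0$ gives $\la = -(m_\a/2 + m_{2\a}) - 2n$, i.e. the hyperplane $\mathcal H_{\a,\,-(m_\a/2+m_{2\a})-2n}$, and solving $\tfrac{\la}{2} + \tfrac{m_\a}{4} + \tfrac12 = -n$ gives $\la = -m_\a/2 - 1 - 2n$, i.e. $\mathcal H_{\a,\,-m_\a/2-1-2n}$. These are exactly the two families listed in the statement. (One should note that at such a zero the numerator $\Gamma(\la)$ could in principle also have a pole, which would cancel the zero; this is why the statement says ``possible zeros''. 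Likewise, one must check that a pole coming from $\Gamma(\la)$ is not cancelled by a coincident pole of a denominator factor — but a pole of $\Gamma(\la)$ sits on $\mathcal H_{\a,-n}$ with $n \ge 0$, while a pole of a denominator gamma factor sits on one of the zero-hyperplanes above, and since $m_\a > 0$ these arguments are strictly negative and of a different arithmetic form, so no cancellation occurs and the simple poles of $c_\a$ along $\mathcal H_{\a,-n}$ genuinely survive.)

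Next I would pass from $c_\a$ to $c$ using \eqref{eq:c}. Since $c(\l)$ is a constant multiple of the finite product $\prod_{\a \in \Sigma_0^+} c_\a(\l)$, its poles (resp. zeros) are contained in the union over $\a \in \Sigma_0^+$ of the poles (resp. zeros) of the individual factors, \emph{provided no pole of one factor is cancelled by a zero of another}. Here the remark preceding the lemma is exactly what makes this work: the equality $\mathcal H_{\a,n} = \mathcal H_{\b,m}$ for $\a, \b \in \Sigma_0^+$ and $n, m \in \Z$ forces $\a = \b$ and $n = m$. Hence the pole hyperplane $\mathcal H_{\a,-n}$ of $c_\a$ cannot coincide with any zero hyperplane of another factor $c_\b$ ($\b \ne \a$) — and for the same factor $c_\a$, its own pole hyperplanes $\mathcal H_{\a,-n}$ ($n \ge 0$) are distinct from its own zero hyperplanes, which have non-integer (or at least non-matching) right-hand sides since $m_\a > 0$. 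Therefore no cancellation happens, each simple pole of each $c_\a$ remains a simple pole of $c$, and the pole and zero sets of $c$ are exactly the unions claimed.

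The main obstacle — really the only subtlety — is the bookkeeping of possible coincidences: one must be sure that in forming the product \eqref{eq:c} a pole contributed by $c_\a$ is not silently killed by a zero contributed by some $c_\b$, and that within a single $c_\a$ the numerator and denominator singularities do not overlap. Both are handled by the combination of the injectivity statement $\mathcal H_{\a,n} = \mathcal H_{\b,m} \Rightarrow (\a,n)=(\b,m)$ and the strict positivity of the multiplicities (which keeps the denominator-gamma arguments away from the hyperplanes $\mathcal H_{\a,-n}$, $n \in \N_0$, that carry the poles). Everything else is the routine translation of ``pole of $\Gamma$'' $\leftrightarrow$ ``nonpositive integer argument'', ``at most simple'' $\leftrightarrow$ ``$\Gamma$ has only simple poles''. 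I would phrase the final write-up as: poles of $c_\a$ $\subseteq$ pole-hyperplanes of $\Gamma(\la)$, all simple; zeros of $c_\a$ $\subseteq$ pole-hyperplanes of the two denominator gamma factors; then take the union over $\a \in \Sigma_0^+$, invoking the no-coincidence remark, to conclude.
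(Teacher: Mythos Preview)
Your approach is correct and is exactly the one the paper takes: the lemma is stated immediately after the observation that $\mathcal H_{\a,n}=\mathcal H_{\b,m}$ (with $\a,\b\in\Sigma_0^+$) forces $\a=\b$, $n=m$, and the paper's proof is nothing more than ``from the singularities of the gamma function we therefore obtain the following lemma''. Your write-up simply spells this out; note only that your discussion of whether poles are ``genuinely'' present and not cancelled is more than the statement asks for---since the claim is ``at most simple poles'' and ``possible zeros'', all you really need is that $1/\Gamma$ is entire (so each $c_\a$ inherits at most simple poles from $\Gamma(\la)$) together with the hyperplane-disjointness observation to prevent poles of different factors from stacking.
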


\subsection{The hypergeometric functions of Heckman and Opdam}
Let $\l\in\frak a^*_\C$. The hypergeometric function of spectral
parameter $\l$ is the unique analytic $W$-invariant function
$\varphi_\l(x)$ on $\mathfrak a$ which satisfies the system of
differential equations (\ref{eq:hypereq}) and which is normalized
by $\varphi_\l(0)= 1$. In the geometric case, with the identification of
$\mathfrak a$ with $A=\exp \mathfrak a$, the function $\varphi_\l$
agrees with the (elementary) spherical function of spectral
parameter $\l$. For $x \in \mathfrak a^+$ and generic $\l \in
\frak a^*_\C$, the hypergeometric function admits the
representation
\begin{equation} \label{eq:FfromPhi}
 \varphi_\l(x)=\sum_{w\in W}c(w\l) \Phi_{w\l}(x)\,.
\end{equation}

\begin{Ex}[The rank-one case] \label{ex:Phirankone-bis}
In the rank-one case, with the identifications introduced in
Example \ref{ex:rank-one-diff}, Heckman-Opdam's hypergeometric
function coincides with the Jacobi function of the first kind
\begin{equation*}
  \varphi_\l(t)=
\hyper{{\frac{m_\a/2+m_{2\a}+\l}{2}}}{{\frac{m_\a/2+m_{2\a}-\l}{2}}}{{\frac{m_\a+m_{2\a}+1}{2}}}
{-\sinh^2 t}.
\end{equation*}
\end{Ex}
\begin{Ex}[The complex case] \label{ex:complexPhi-bis}
In the complex case the multiplicity is geometric. The
hypergeometric functions of Heckman and Opdam agree with
Harish-Chandra's spherical functions. In this very special case,
they are given by the explicit formula
\begin{equation}
  \label{eq:Fcompl}
  \varphi_\l(x)=\frac{\pi(\rho)}{\pi(\l)}\; \frac{\sum_{w\in W} (\det w) e^{w\l(x)} }{\Delta(x)}.
\end{equation}
where
\begin{equation} \label{eq:pi}
\pi(\l)=\prod_{\a\in \Sigma_0^+} \inner{\l}{\a}
\end{equation}
and $\Delta$ is as in (\ref{eq:Weylden}). See e.g. \cite[p. 251]{GV}.
\end{Ex}

The nonsymmetric hypergeometric function of spectral parameter
$\l$ is the unique analytic function $G_\l(x)$ on $\mathfrak a$ which
satisfies the system of differential-reflection equations
\begin{equation} \label{eq:TG}
T_x G_\l=\l(x)G_\l\,, \qquad x \in \frak a\,,
\end{equation}
and which is normalized by $G_\l(0)= 1$. We have the relation
\begin{equation}
 \label{eq:FandG}
\varphi_\l(x)= \frac{1}{|W|} \sum_{w \in W} G_\l(wx)\,, \qquad x\in \mathfrak a\,.
\end{equation}
Schapira proved in \cite{Schapira} that $\varphi_\l$ is real and strictly positive for $\l\in\frak a^*$.
He also proved the fundamental estimate:
\begin{equation}
|\varphi_\l|\leq \varphi_{\Re\l}\,, \qquad \l \in \frak a_\C^*\,.
\end{equation}
We refer to \cite{OpdamActa}, \cite{HS} and \cite{Schapira}
for the proof of these statements and for further information.

\subsection{The hypergeometric Fourier transform}
\label{subsection:hypergeomFourier}

Let $dx$ denote a fixed normalization of the Haar measure on $\fa$. We associate with the triple $(\mathfrak a,\Sigma,m)$
the measure $d\mu(x)=\mu(x)\, dx$ on $\fa$, where
\begin{equation}
 \label{eq:mu}
\mu(x)=\prod_{\a \in\Sigma^+} \big|e^{\a(x)}-e^{-\a(x)}\big|^{m_\a}\,.
\end{equation}
Notice that when $(\fa, \Sigma, m)$ comes from a Riemannian symmetric space $G/K$,
then $d\mu$ is the component along $A\equiv\mathfrak a$ of the Haar measure on $G$ with respect to the
Cartan decomposition $G=KAK$.

Recall from (\ref{eq:hypFourier}) the definition of the hypergeometric Fourier transform
$\mathcal Ff=\widehat{f}$ of a sufficiently regular $W$-invariant functions on $\mathfrak a$.

Let $\Gamma$ be a $W$-invariant compact convex subset of $\fa$ and let
$q_\Gamma(\lambda)=\sup_{x\in\Gamma}\lambda(x)$, with $\l \in \fa$, be the supporting function of $\Gamma$.
The Paley-Wiener space $PW_\Gamma(\frakacs)^W$ consists of all $W$-invariant entire functions
$F$ on $\frakacs$ satisfying
\begin{equation}
\label{eq:PWGamma}
\sup_{\lambda\in\frakacs} (1+|\lambda|)^N e^{-q_\Gamma(\Re\lambda)}|F(\lambda)|<\infty
\end{equation}
for all $N\in \N_0$. We topologize $PW_\Gamma(\frakacs)^W$ by the
seminorms defined by the left-hand side of (\ref{eq:PWGamma}). Let
$C_\Gamma^\infty(\mathfrak a)^W$ denote the space of $W$-invariant
smooth functions on $\fa$ with support inside $\Gamma$. The space
$C_\Gamma^\infty(\mathfrak a)^W$ is considered with the topology
induced by $C^\infty_c(\fa)$. We will only be interested in two
situations: when $\Gamma$ is equal to $B_R:=\{x \in \fa:|x|\leq
R\}$ for some $R>0$, and when $\Gamma$ is equal to the polar set
$C_\Lambda$ of the convex hull of the set $\{w(\Lambda):w\in W\}$
(with $\Lambda \in \mathfrak a^*$). Recall that $C_\Lambda=\{x\in
\fa:\Lambda(x^+)\leq 1\}$, where $x^+$ is the unique element in
$\overline{\fa^+}$ of the Weyl group orbit of $x$. In the first
case, $q_{B_R}(\l)=R|\l|$. Hence, the usual
 Paley-Wiener space on $\mathfrak{a}_\C^*$ denoted by $PW(\frakacs)^W,$ is the union of
the spaces $PW_{B_R}(\frakacs)^W,$ with the inductive limit
topology.

The following theorem gives the basic results in the
$L^2$-harmonic analysis of the hypergeometric Fourier transform.
It is due to Opdam \cite{OpdamActa}, who proved the Paley-Wiener
theorem in a slightly less general form. In the classical
geometric case of Riemannian symmetric spaces of the noncompact
type and with $\Gamma=B_R$, the theorem is due to Helgason
\cite{HelPW} and Gangolli \cite{GangPW}; with $\Gamma=C_\Lambda$
it was proven by Anker \cite{AnkerJFA}. The fact that Anker's
results also extend to the non-geometric case was observed by
Schapira in \cite[p. 240]{Schapira}.

\begin{Thm} We keep the above notation and assumptions.
\begin{enumerate}
\thmlist
 \item \textup{(Paley-Wiener theorem)} \label{PWheorem}
The hypergeometric Fourier transform $\mathcal F$ is a topological isomorphism between
$C_c^\infty(\fa)^W$ and $PW(\frakacs)^W$. It restricts to a topological isomorphism between
$C_\Gamma^\infty(\fa)^W$ and $PW_\Gamma(\frakacs)^W$.
\item \textup{(Plancherel theorem)}
For a suitable normalization of the Haar measure $d\l$ on $i\fa^*$, the hypergeometric Fourier transform
$\mathcal F$ extends to an isometric isomorphism between $L^2(\fa,d\mu)^W$ and $L^2(i\fa^*, |c(\l)|^{-2}\,d\l)^W$.
\item \textup{(Inversion formula)} For $f \in C^\infty_c(\fa)^W$ we have
$$f(x)= \int_{i\fa^*} \widehat{f}(\l) \varphi_{-\l}(x) |c(\l)|^{-2}\,d\l$$
for all $x \in \fa$.
\end{enumerate}
\end{Thm}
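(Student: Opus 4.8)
As recalled above, all three assertions are theorems of Opdam \cite{OpdamActa}; in the geometric case they go back to Harish--Chandra, Helgason \cite{HelPW} and Gangolli \cite{GangPW} (for $\Gamma=B_R$) and to Anker \cite{AnkerJFA} (for $\Gamma=C_\Lambda$), the latter extending to arbitrary multiplicities as observed by Schapira \cite{Schapira}. We only indicate how the pieces fit together.

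Introduce the \emph{wave-packet operator}
$$
\mathcal J g(x)=\int_{i\fa^*} g(\l)\,\varphi_{-\l}(x)\,|c(\l)|^{-2}\,d\l, \qquad x\in\fa,\ \ g\in PW_\Gamma(\frakacs)^W.
$$
Statement (a) is equivalent to the four facts: $\mathcal F$ maps $C_\Gamma^\infty(\fa)^W$ into $PW_\Gamma(\frakacs)^W$; $\mathcal J$ maps $PW_\Gamma(\frakacs)^W$ into $C_\Gamma^\infty(\fa)^W$; and $\mathcal J\circ\mathcal F=\id$, $\mathcal F\circ\mathcal J=\id$. The identity $\mathcal J\circ\mathcal F=\id$ on $C_c^\infty(\fa)^W$ is exactly the inversion formula (c). The inclusion $\mathcal F(C_\Gamma^\infty(\fa)^W)\subset PW_\Gamma(\frakacs)^W$ is obtained by differentiating under the integral in (\ref{eq:hypFourier}) and using Schapira's bound $|\varphi_\l(x)|\le\varphi_{\Re\l}(x)$ together with the standard estimate $\varphi_\mu(x)\le C(1+|\mu|)^{D}e^{q_\Gamma(\mu)}$ for $\mu\in\fa^*$, $x\in\Gamma$. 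Dually, to prove $\mathcal J(PW_\Gamma(\frakacs)^W)\subset C_\Gamma^\infty(\fa)^W$ one first notes that $\mathcal Jg$ is smooth and $W$-invariant, then fixes $x\in\fa^+$, inserts (\ref{eq:FfromPhi}) into the integral, and collapses the sum over $W$ using the $W$-invariance of $g$ and of $|c(\l)|^{-2}$ on $i\fa^*$, together with $\overline{c(\l)}=c(-\l)$ there; this yields $\mathcal Jg(x)=|W|\int_{i\fa^*} g(\l)\,\Phi_{-\l}(x)\,c(\l)^{-1}\,d\l$. One then shifts the contour $i\fa^*$ to $\{\l:\Re\l=t\xi_0\}$ with $t\to+\infty$, for a direction $\xi_0\in\overline{(\fa^*)^+}$ separating $x$ from the convex set $\Gamma$: by Theorem \ref{thm:polesGamma-Phi} and Lemma \ref{lemma:poleszerosc} the integrand $g(\l)\Phi_{-\l}(x)/c(\l)$ has its (at most simple) poles only along hyperplanes with $\l_\a\in\R_{<0}$, $\a\in\Sigma_0^+$, so no residues are crossed; the uniform bounds of Lemma \ref{lemma:conv-diffHCseries} on the coefficients $\Gamma_\mu$, combined with the Paley--Wiener decay of $g$, force the shifted integral to tend to $0$. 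Hence $\mathcal Jg(x)=0$ whenever $x\notin\Gamma$, i.e. $\mathcal Jg\in C_\Gamma^\infty(\fa)^W$. This is the hypergeometric analogue of the Rosenberg--Helgason contour argument.

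What remains --- the identities $\mathcal F\circ\mathcal J=\id$ on $PW(\frakacs)^W$ and $\mathcal J\circ\mathcal F=\id$ on $C_c^\infty(\fa)^W$, which together with the two inclusions above give (a) and the inversion formula (c) --- is the analytic heart of the theorem and the genuinely new part of Opdam's work. It is proved through the nonsymmetric theory: one establishes the Plancherel decomposition for the nonsymmetric hypergeometric function $G_\l$ by a residue calculus built on the spectral theory of the Cherednik operators and the structure of the graded affine Hecke algebra, and then symmetrizes via (\ref{eq:FandG}); the sharp support statement for $\Gamma=C_\Lambda$ rests on a contour analysis controlled by the location of the singularities of the $c$-function, in the spirit of Anker \cite{AnkerJFA}. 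Explicit checkpoints are the cases $m=0$ and the complex case of Example \ref{ex:complexPhi-bis}, where $\varphi_\l$ is elementary (Euclidean Fourier inversion, resp. Fourier inversion twisted by the Weyl denominator). Once $\mathcal J\circ\mathcal F=\id$ is known, the Plancherel theorem (b) is routine: substituting the inversion formula for $f$ into $\int_\fa f\,\overline h\,d\mu$ with $f,h\in C_c^\infty(\fa)^W$, using Fubini and $\overline{\varphi_{-\l}(x)}=\varphi_\l(x)$ on $i\fa^*$, gives $\int_\fa f\,\overline h\,d\mu=\int_{i\fa^*}\widehat f(\l)\,\overline{\widehat h(\l)}\,|c(\l)|^{-2}\,d\l$; positivity of the density $|c(\l)|^{-2}$ makes $\mathcal F$ an $L^2$-isometry, which extends to all of $L^2(\fa,d\mu)^W$ by density of $C_c^\infty(\fa)^W$, while surjectivity onto $L^2(i\fa^*,|c(\l)|^{-2}\,d\l)^W$ follows from (a) since $PW(\frakacs)^W$ is dense in that space. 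The main obstacle is thus precisely this pair of inversion identities and the uniform analytic control underlying them --- sharp growth bounds on $\Phi_\l$ and on $c(\l)^{\pm1}$ in vertical strips and on the coefficients $\Gamma_\mu$ --- which is the circle of estimates revisited in the symmetric setting in the present paper; the complete argument is the one in \cite{OpdamActa}.
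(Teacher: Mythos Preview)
The paper does not prove this theorem at all: it is stated as background, with attribution to Opdam \cite{OpdamActa} (and, in the geometric case, to Helgason, Gangolli, and Anker), and no argument is given. Your ``Strategy of proof'' is therefore not competing with any proof in the paper; it is a reasonable and correctly attributed sketch of the standard route (wave-packet operator, contour shift for the support statement, Opdam's nonsymmetric Plancherel theorem plus symmetrization, and then the routine passage to $L^2$), and it points to the right references for each step.
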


\section{Harish-Chandra series expansion of the hypergeometric function}

In this section we study the Harish-Chandra expansion of the hypergeometric function $\varphi_\l(x)$
around arbitrary (i.e. not necessarily generic) $\l \in \frak a^*_\C$ under the assumption that
$x \in \frak a^+$ is sufficiently far from the walls of $\frak a^+$.
We begin with some properties of the centralizer of $\l$ that will be needed in the sequel.

We shall employ the notation $W_\l=\{w \in W: w\l=\l\}$ for the centralizer of
$\l \in \frak a_\C^*$ in $W$. For $\Theta \subset \Pi$ we denote by $W_\Theta$ the (standard parabolic) subgroup of $W$ generated by the reflections $r_\a$ with $\a \in \Theta$. Moreover, we write $\Sigma_\Theta$ for the subsystem of $\Sigma$ consisting of the roots
which can be written as linear combinations of elements from $\Theta$. Furthermore, we set
$\Sigma_{\Theta}^+=\Sigma_{\Theta} \cap \Sigma^+$ and $\Sigma_{\Theta,0}^+=\Sigma_{\Theta} \cap \Sigma^+_0$.

Recall the notation $\l_\a$ from (\ref{eq:la}).  For $\l \in \frak a_\C^*$ we define
\begin{align}
\Sigma_{\l}&=\{\a\in \Sigma_0^+: \l_\a \in \Z\}\,,\\
\Sigma_{\l}^>&=\{\a\in \Sigma_0^+: \l_\a \in \N\}\,,\\
\Sigma_{\l}^0&=\{\a\in \Sigma_0^+: \l_\a=0\}\,.
\end{align}

Suppose that $\Re\l\in \overline{(\frak a^*)^+}$.
Then $W_{\Re\l}=W_{\Theta(\l)}$ where $\Theta(\l)=\Sigma_{\Re\l}^0 \cap \Pi$. Observe that $W_{\Theta(\l)}$ is the Weyl group of $\Sigma_{\Theta(\l)}$ and $\Sigma_{\Theta(\l),0}^+=\Sigma_{\Re\l}^0$.

Let $w_\l\in W$ be chosen so that $w_\l\Im\l \in \overline{(\frak a^*)^+}$. Then there is $\Xi(\l) \subset \Pi$ so that $W_{\Im\l}=w_\l^{-1}W_{\Xi(\l)} w_\l=W_{w_\l^{-1} \Xi(\l)}$.
Moreover, $\Sigma_{w_\l^{-1} \Xi(\l),0}^+=\Sigma_{\Im\l}^0$.

\begin{Lemma} \label{lemma:elementary-prop-lambda0}
Let $\l \in \frak a_\C^*$ with $\Re\l\in \overline{(\frak a^*)^+}$, and keep the above notation.
Then
\begin{align}
&W_{\l}=W_{\Re\l} \cap W_{\Im\l}=W_{\Theta(\l)} \cap W_{w_\l^{-1} \Xi(\l)}\,,\\
&\Sigma_{\l}^0=\Sigma_{\Theta(\l),0}^+ \cap \Sigma_{w_\l^{-1} \Xi(\l),0}^+ \subset \Sigma_{\Theta(\l),0}^+\,,\\
&\Sigma_{\l}=\Sigma_{\l}^0 \sqcup \Sigma_{\l}^>\,,\\
&\Sigma_{\l}^> \cap \Sigma_{\Theta(\l),0}^+=\emptyset\,.
\label{eq:SigmainterSigmaTheta}
\end{align}
Moreover, let $w \in W_{\l}$. Then
\begin{align}
\label{eq:wSigmalambda00-uno}
&w(\Sigma_{\l}^0) \subset \Sigma_{\l}^0 \sqcup (-\Sigma_{\l}^0)\,, \\
\label{eq:wSigmalambda00-due}
&w(\Sigma_0^+ \setminus \Sigma_{\l}^0) = \Sigma_0^+ \setminus \Sigma_{\l}^0\,.
\end{align}
Furthermore, if $w \in W_{\Re\l}=W_{\Theta(\l)}$, then
\begin{align}
\label{eq:wSigmalambdacap-uno}
&w(-\Sigma_0^+) \cap \Sigma_{\l}^>=\emptyset\,,\\
\label{eq:wSigmalambdacap-due}
&w(\Sigma_0^+) \cap \Sigma_{\l}^>=\Sigma_{\l}^>\,.
\end{align}
\end{Lemma}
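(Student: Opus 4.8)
The plan is to reduce everything to two standard facts: (a) the Weyl group $W$ acts on $\frak a_\C^*$ by transformations that preserve the real form $\frak a^*$ and the $\C$-bilinear inner product, and $W$ permutes the set $\Sigma_0$ of indivisible roots; and (b) for any $\Theta\subseteq\Pi$ the standard parabolic subgroup $W_\Theta$ permutes $\Sigma^+\setminus\Sigma_\Theta^+$, hence (intersecting with $\Sigma_0$) also permutes $\Sigma_0^+\setminus\Sigma_{\Theta,0}^+$. I would recall that (b) follows by induction on the length of $w\in W_\Theta$ in the generators $\{r_\a:\a\in\Theta\}$: a simple reflection $r_\a$ with $\a\in\Pi$ permutes $\Sigma^+\setminus\{\a\}$, and each $r_\a$ with $\a\in\Theta$ preserves $\Sigma_\Theta$.

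First I would dispatch the four formal assertions. Writing $\l=\Re\l+i\,\Im\l$ with $\Re\l,\Im\l\in\frak a^*$, and using that $w$ sends $\frak a^*$ into itself, the relation $w\l=\l$ is equivalent to $w\,\Re\l=\Re\l$ and $w\,\Im\l=\Im\l$, so $W_\l=W_{\Re\l}\cap W_{\Im\l}$; substituting $W_{\Re\l}=W_{\Theta(\l)}$ and $W_{\Im\l}=W_{w_\l^{-1}\Xi(\l)}$ gives the first identity. Likewise $\l_\a=0$ iff $(\Re\l)_\a=0$ and $(\Im\l)_\a=0$, so $\Sigma_\l^0=\Sigma_{\Re\l}^0\cap\Sigma_{\Im\l}^0=\Sigma_{\Theta(\l),0}^+\cap\Sigma_{w_\l^{-1}\Xi(\l),0}^+\subseteq\Sigma_{\Theta(\l),0}^+$, the second identity. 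Next, $\Re\l\in\overline{(\frak a^*)^+}$ forces $\Re(\l_\a)=(\Re\l)_\a\ge 0$ for all $\a\in\Sigma^+$, so $\l_\a\in\Z$ implies $\l_\a$ is a nonnegative integer --- hence either $0$ or in $\N$, never both --- which gives $\Sigma_\l=\Sigma_\l^0\sqcup\Sigma_\l^>$. Finally, $\a\in\Sigma_\l^>$ means $\l_\a\in\N$ is real and $\ge 1$, so $(\Re\l)_\a=\l_\a>0$ and $\a\notin\Sigma_{\Re\l}^0=\Sigma_{\Theta(\l),0}^+$, which is (\ref{eq:SigmainterSigmaTheta}).

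For (\ref{eq:wSigmalambda00-uno}), (\ref{eq:wSigmalambdacap-uno}) and (\ref{eq:wSigmalambdacap-due}) the basic computation is $\langle\l,w\a\rangle=\langle w^{-1}\l,\a\rangle=\langle\l,\a\rangle$ for $w\in W_\l$, so $w\a$ carries the same value of $\langle\l,\cdot\rangle$ as $\a$, up to the sign of $w\a$. Combined with $w\Sigma_0=\Sigma_0$: for $\a\in\Sigma_\l^0$ the root $w\a$ is indivisible and orthogonal to $\l$, hence lies in $\Sigma_\l^0$ if positive and in $-\Sigma_\l^0$ if negative, proving (\ref{eq:wSigmalambda00-uno}). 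For (\ref{eq:wSigmalambdacap-uno}) and (\ref{eq:wSigmalambdacap-due}), take $\gamma\in\Sigma_\l^>$; as in the previous paragraph $(\Re\l)_\gamma>0$, so $\gamma$ is not orthogonal to $\Re\l$ and hence $\gamma\in\Sigma_0^+\setminus\Sigma_{\Theta(\l),0}^+$ (every root of $\Sigma_{\Theta(\l)}$ being orthogonal to $\Re\l$). By Fact (b) applied to $w^{-1}\in W_{\Theta(\l)}$, $w^{-1}\gamma\in\Sigma_0^+\setminus\Sigma_{\Theta(\l),0}^+\subseteq\Sigma_0^+$, which is exactly the statement $\gamma\notin w(-\Sigma_0^+)$ of (\ref{eq:wSigmalambdacap-uno}), and also $\gamma\in w(\Sigma_0^+)$; together with the trivial inclusion $w(\Sigma_0^+)\cap\Sigma_\l^>\subseteq\Sigma_\l^>$ this yields (\ref{eq:wSigmalambdacap-due}).

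The remaining identity (\ref{eq:wSigmalambda00-due}) is the one I expect to cost the most effort. Since $w\mapsto w^{-1}$ preserves $W_\l$ and $w$ acts injectively, it suffices to show $w\a\in\Sigma_0^+\setminus\Sigma_\l^0$ for all $w\in W_\l$ and $\a\in\Sigma_0^+\setminus\Sigma_\l^0$; because $\langle\l,w\a\rangle=\langle\l,\a\rangle\neq 0$ and $w\Sigma_0=\Sigma_0$, the only thing to check is that $w\a$ is positive. I would split $\Sigma_0^+\setminus\Sigma_\l^0=(\Sigma_0^+\setminus\Sigma_{\Theta(\l),0}^+)\sqcup(\Sigma_{\Theta(\l),0}^+\setminus\Sigma_\l^0)$ (legitimate since $\Sigma_\l^0\subseteq\Sigma_{\Theta(\l),0}^+$). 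On the first part $\a$ is not orthogonal to $\Re\l$ and $w\in W_\l\subseteq W_{\Theta(\l)}$, so Fact (b) gives $w\a\in\Sigma_0^+$ at once. On the second part $(\Re\l)_\a=0$ but $(\Im\l)_\a\neq 0$, so one has to pass to the second parabolic: conjugating by $w_\l$ gives $w_\l w w_\l^{-1}\in W_{\Xi(\l)}$, the root $w_\l\a$ pairs nontrivially with the dominant vector $w_\l\,\Im\l$ and hence (with its sign) lies outside $\Sigma_{\Xi(\l)}$, and Fact (b) in the parabolic $W_{\Xi(\l)}$ controls $(w_\l w w_\l^{-1})(w_\l\a)=w_\l(w\a)$. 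The hard part --- and really the crux of the lemma --- is to transport this conclusion back through $w_\l$ so as to obtain the positivity of $w\a$ itself, matching up the positive systems attached to $\Theta(\l)$ and to $\Xi(\l)$; this is where I would need to use carefully the specific choices of $w_\l$ and $\Xi(\l)$ (in particular the identity $\Sigma_{w_\l^{-1}\Xi(\l),0}^+=\Sigma_{\Im\l}^0$) and the way $W_{\Theta(\l)}$, $W_{\Xi(\l)}$ and $w_\l$ sit inside $W$.
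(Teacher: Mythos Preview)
Your treatment of the four displayed identities and of (\ref{eq:wSigmalambda00-uno}) coincides with the paper's. For (\ref{eq:wSigmalambdacap-uno})--(\ref{eq:wSigmalambdacap-due}) the paper argues directly rather than through your Fact~(b): if $w\in W_{\Re\l}$ and $\a\in w(-\Sigma_0^+)$, then $(\Re\l)_\a=(w\Re\l)_\a=(\Re\l)_{w^{-1}\a}\le 0$ because $w^{-1}\a$ is negative and $\Re\l$ is dominant, whence $\a\notin\Sigma_\l^>$. Both routes are equally short.

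The real divergence is at (\ref{eq:wSigmalambda00-due}). The paper does not split cases or pass through $w_\l$; it deduces (\ref{eq:wSigmalambda00-due}) in one sentence from (\ref{eq:wSigmalambda00-uno}) (applied to $w$ and $w^{-1}$) together with the structural fact that $W_\l$ is the Weyl group of the closed subsystem $\Sigma_\l^0\sqcup(-\Sigma_\l^0)$. Your hesitation in Case~2 is, however, not a failure of technique but a genuine obstruction. Take $\Sigma$ of type $A_2$ with simple roots $\a_1,\a_2$ and set $\l=i(\a_1-\a_2)$: then $\Re\l=0\in\overline{(\frak a^*)^+}$, $\Sigma_\l^0=\{\a_1+\a_2\}$, $W_\l=\{e,r_{\a_1+\a_2}\}$, and $r_{\a_1+\a_2}(\a_1)=-\a_2$, so $W_\l$ does \emph{not} preserve $\Sigma_0^+\setminus\Sigma_\l^0=\{\a_1,\a_2\}$. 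Thus (\ref{eq:wSigmalambda00-due}) fails for this $\l$, and neither your Case~2 argument nor the paper's one-line appeal can be completed as stated. (The identity is not invoked elsewhere in the paper; the natural consumer, Lemma~\ref{lemma:b0}(a), needs only (\ref{eq:wSigmalambda00-uno}) and the fact that $W_\l$ is generated by the reflections $r_\a$ with $\a\in\Sigma_\l^0$.)
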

\begin{proof}
The first part of the Lemma is an immediate consequence of the discussion above and the fact that
$\Re\l\in \overline{(\frak a^*)^+}$. For (\ref{eq:SigmainterSigmaTheta}), observe that if $\a\in \Sigma_{\l}^>$ then
$\inner{\Re\l}{\a}=\inner{\l}{\a}>0$.
To prove (\ref{eq:wSigmalambda00-uno}), notice that
$w^{-1} \in W_{\l}$. Hence for $\a\in \Sigma_{\l}^0$ we have $\l_{w\a}=(w^{-1}\l)_\a=\l_\a=0$.
Formula (\ref{eq:wSigmalambda00-due}) is a consequence of (\ref{eq:wSigmalambda00-uno}) applied to $w$ and
$w^{-1}$ and the fact that $W_{\l}$ is the Weyl group of the closed subsystem $\Sigma_{\l}^0 \sqcup (-\Sigma_{\l}^0)$ of $\Sigma$.

Suppose now that $w \in W_{\Theta(\l)}$, so $w(\Re\l)=\Re\l$. Let $\a \in w(-\Sigma_0^+)$. Then
\begin{equation*}
\Re(\l_\a)=(\Re\l)_\a=(w(\Re\l))_\a=(\Re\l)_{w^{-1}\a} \leq 0
\end{equation*}
because $w^{-1}\a \in -\Sigma_0^+$ and $\Re\l \in \overline{(\frak a^*)+}$. Thus $\a\notin \Sigma_{\l}^>$.
This proves (\ref{eq:wSigmalambdacap-uno}). Finally, (\ref{eq:wSigmalambdacap-due}) follows immediately from
(\ref{eq:wSigmalambdacap-uno}).
\end{proof}

As intersection of parabolic subgroups of $W$, for each $\l\in \mathfrak a_\C^*$
the group $W_{\l}$ is itself a parabolic subgroup. By definition, this means that there is $I \subset \Pi$ and $w \in W$ so that $W_{\l}=W_{wI}=wW_Iw^{-1}$.  The proof of the following proposition provides an explicit way of constructing the elements $w$ and $I$ when $\l$ is given.

\begin{Prop}
Let $W_1, W_2 \subset W$ be two parabolic subgroups. Then $W_1 \cap W_2$ is parabolic.
\end{Prop}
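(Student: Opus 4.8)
The plan is to use the classical description of the parabolic subgroups of a finite reflection group as stabilizers of points. Recall the classical fact already underlying the identity $W_{\Re\l}=W_{\Theta(\l)}$ recorded before Lemma~\ref{lemma:elementary-prop-lambda0}: if a finite reflection group $W'$ acts on a Euclidean space $V$ with a fixed open chamber $C$ and simple system $\Delta$, then for every $x$ in the closed chamber $\overline{C}$ one has $\mathrm{Stab}_{W'}(x)=\langle r_\a:\a\in\Delta,\ \langle x,\a\rangle=0\rangle$, a standard parabolic subgroup of $W'$; for arbitrary $x\in V$ the stabilizer $\mathrm{Stab}_{W'}(x)$ is therefore a parabolic subgroup of $W'$ (conjugate $x$ into $\overline{C}$ by $W'$). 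Two consequences will be used. First, every parabolic subgroup of $W$ is the stabilizer $W_q=\{w\in W:wq=q\}$ of a single point $q\in\frak a^*$: a standard parabolic $W_I$ is the stabilizer of any point $x$ with $\langle x,\a\rangle=0\iff\a\in I$ (such $x$ lies in $\overline{(\frak a^*)^+}$), and conjugation produces an arbitrary parabolic. Second, conversely, the stabilizer in any finite reflection group of any point of the ambient space is a parabolic subgroup of that group.

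First I would reduce to the case where $W_1$ is standard. Writing $W_1=uW_Iu^{-1}$ with $I\subseteq\Pi$ and $u\in W$, we have $W_1\cap W_2=u\bigl(W_I\cap u^{-1}W_2u\bigr)u^{-1}$, and $u^{-1}W_2u$ is again parabolic; since any conjugate of a parabolic subgroup is parabolic, it suffices to show that $W_I\cap W_2$ is parabolic when $W_1=W_I$ is standard and $W_2$ is an arbitrary parabolic subgroup of $W$. By the first consequence above, pick $q\in\frak a^*$ with $W_2=W_q$. Then
\[
W_1\cap W_2 \;=\; W_I\cap W_q \;=\; \{w\in W_I:wq=q\} \;=\; \mathrm{Stab}_{W_I}(q),
\]
the stabilizer of the point $q$ \emph{inside the reflection group $W_I$}.

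Now I would apply the classical fact to $W_I$ itself. The group $W_I$ acts on $\frak a^*$ as a finite reflection group with root system $\Sigma_I$, whose simple system is $I$ (because $I\subseteq\Pi$), and $\{\mu\in\frak a^*:\langle\mu,\a\rangle>0\text{ for all }\a\in I\}$ is the corresponding open chamber. Choose $t\in W_I$ with $\langle tq,\a\rangle\geq0$ for all $\a\in I$; the classical fact then gives $\mathrm{Stab}_{W_I}(tq)=W_J$ with $J=\{\a\in I:\langle tq,\a\rangle=0\}$, so $\mathrm{Stab}_{W_I}(q)=t^{-1}W_Jt$. Since $J\subseteq I\subseteq\Pi$, the group $W_J$ is a standard parabolic subgroup of $W$, and $t\in W_I\subseteq W$; hence $\mathrm{Stab}_{W_I}(q)=t^{-1}W_Jt$ is a parabolic subgroup of $W$. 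Tracing back the reduction, $W_1\cap W_2=(ut^{-1})W_J(ut^{-1})^{-1}$ is parabolic. Moreover the successive choices — the point $q$ realizing $W_2$ as a stabilizer, the element $t$ sending $q$ into the closed chamber of $W_I$, and the subset $J\subseteq\Pi$ — constitute the announced explicit way of writing $W_1\cap W_2$ as $wW_{I'}w^{-1}$; applied to $W_\l=W_{\Re\l}\cap W_{\Im\l}$ with $\Re\l\in\overline{(\frak a^*)^+}$ this yields $W_\l$ in this form starting from $\l$.

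The only substantial input is the classical fact that a point stabilizer in a finite reflection group is a standard parabolic when the point lies in the closed fundamental chamber; once this is granted, the point to watch is that the computation of $W_1\cap W_2$ naturally takes place \emph{inside} the smaller reflection group $W_I$, so that one must invoke the fact a second time for $W_I$ and then observe — using that $I$ is a simple system for $\Sigma_I$ — that a parabolic subgroup of $W_I$ is a parabolic subgroup of $W$. Everything else is routine conjugation bookkeeping.
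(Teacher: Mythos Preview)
Your proof is correct, but it follows a different route from the paper's. Both arguments rest on the same classical fact that point stabilizers in a finite reflection group are parabolic subgroups, yet they deploy it differently. The paper works geometrically: writing $W_1=W_{\l_1}$ and $W_2=W_{\l_2}$, it exploits the facet stratification $\frak a^*=\bigsqcup_{w,I}wC_I$ to find a subsegment $\overline{\mu_1\mu_2}\subset\overline{\l_1\l_2}$ lying in a single facet $wC_I$; since $\mu_1,\mu_2$ then share the stabilizer $wW_Iw^{-1}$, an easy double inclusion (using that fixing two distinct points of a line fixes the whole line) gives $W_1\cap W_2=wW_Iw^{-1}$ directly. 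Your argument is more algebraic: you first conjugate to make $W_1=W_I$ standard, then identify $W_I\cap W_q$ as the stabilizer of $q$ \emph{inside the smaller reflection group} $W_I$, and finally invoke the classical fact a second time for $W_I$ to see this stabilizer has the form $t^{-1}W_Jt$ with $J\subseteq I\subseteq\Pi$. The paper's approach buys a one-step description of $W_1\cap W_2$ as the stabilizer of a single point of $\frak a^*$, while yours makes transparent the inclusion $J\subseteq I$ (hence $W_1\cap W_2$ is parabolic \emph{in} $W_1$ as well) and avoids the facet decomposition altogether. Both yield the promised explicit construction of $w$ and $I$ for $W_\l=W_{\Re\l}\cap W_{\Im\l}$.
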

\begin{proof} (see \cite[Proposition 3.11]{Qi}).
We can suppose that $W_1 \neq W_2$. Let $\l_1$ and $\l_2$ be two distinct elements of $\frak a^*$ fixed by $W_1$ and $W_2$, respectively. (Recall that every parabolic subgroup of $W$ is the centralizer of some element of $\mathfrak{a}^*$.)
Then $W_ 1 \cap W_2$ fixes the segment $\overline{\l_1 \l_2}$. Recall also that
$\frak a^* =\sqcup_ {w \in W, I \subset \Pi} (wC_I)$, where
$$C_I=\{\l \in \frak a^*: \text{$\inner{\a}{\l}=0$ for $\a \in I$,  $\inner{\a}{\l}>0$ for $\a \in \Pi \setminus I$}\}\,;$$
see e.g. \cite[Section 1.15]{Humpreys}.
We can therefore find $I \subset\Pi$, $w \in W$ and $\mu_1\neq \mu_2$ so that $\overline{\mu_1\mu_2} \subset \overline{\l_1\l_2} \cap wC_I$. Hence $\mu_1$
and $\mu_2$ admit the same centralizer $wW_{I}w^{-1}$. It follows that $wW_{I}w^{-1}$ fixes $\l_1$ and $\l_2$. Hence $wW_{I}w^{-1} \subset W_ 1\cap W_2$. Conversely, every element in $W_1 \cap W_2$ fixes $\mu_1, \mu_2 \in \overline{\l_1\l_2}$. So
$W_1 \cap W_2 \subset wW_{I}w^{-1}$.
\end{proof}

The following lemma describes the possible singularities of the coefficients of the Harish-Chandra expansion of $\varphi_{\l_0}(x)$ for an arbitrarily fixed $\l=\l_0 \in \mathfrak{a}_\C^*$. Notice that, by $W$-invariance, we can always suppose that $\Re\l_0\in \overline{(\frak a^*)^+}$. Some parts of this  lemma must have been considered by previous authors studying Harish-Chandra expansions. As we could not find references for them, we include their proof for the sake of completeness.

\begin{Lemma}
\label{lemma:singcGammamu}
Let $\l_0 \in \frak a_\C^*$ with $\Re\l_0\in \overline{(\frak a^*)^+}$, and keep the above notation.
Let $w \in W$ and $\mu \in 2\Lambda \setminus \{0\}$.
\begin{enumerate}
\thmlist \item The singularities of the function
$c(w\l)\Gamma_\mu(w\l)$ are at most simple poles along the
hyperplanes
$$\mathcal H_{\a,n}\qquad \text{with $\a\in \Sigma_0^+$ and $n \in \Z$}\,.$$
The hyperplane $\mathcal H_{\a,n}$ is a possible singular
hyperplane passing through $\l_0$ if and only if $\a\in
\Sigma_{\l_0}$ and $n=(\l_0)_\a$. \item The possible singularities
of $c(w\l)$ at $\l=\l_0$ are at most simple poles along the
hyperplanes
\begin{align*}
&\mathcal H_{\a,0}\qquad \text{with $\a\in \Sigma_{\l_0}^0$}\,,\\
&\mathcal H_{\a,n}\qquad \text{with $\a\in \Sigma_{\l_0}^> \cap
w(-\Sigma_0^+)$ and $n=(\l_0)_\a \in \N$}\,.
\end{align*}
In fact, each hyperplane $\mathcal H_{\a,0}$ with $\a\in
\Sigma_{\l_0}^0$ is always a simple pole of $c(w\l)$ at $\l=\l_0$.

The possible singularities of $\Gamma_\mu(w\l)$ at $\l=\l_0$ are
at most simple poles along the hyperplanes
\begin{equation*}
\mathcal H_{\a,n}\qquad \text{with $\a\in \Sigma_{\l_0}^> \cap
w(\Sigma_0^+)$ and $n=(\l_0)_\a \in \N$}\,.
\end{equation*}
\item Suppose $w \in W_{\Re\l_0}$. Then the singularities of
$c(w\l)$ at $\l=\l_0$ are precisely simple poles along the
hyperplanes
\begin{equation*}
\mathcal H_{\a,0}\qquad \text{with $\a\in \Sigma_{\l_0}^0$}\,.
\end{equation*}
Those of $\Gamma_\mu(w\l)$ at $\l=\l_0$ are at most simple poles along the hyperplanes
\begin{equation*}
\mathcal H_{\a,n} \qquad \text{with $\a\in \Sigma_{\l_0}^>$ and
$n=(\l_0)_\a \in \N$}\,.
\end{equation*}
\end{enumerate}
\end{Lemma}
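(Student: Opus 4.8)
The plan is to transport the singular loci of $c$ and of $\Gamma_\mu$, described in Lemma~\ref{lemma:poleszerosc} and Theorem~\ref{thm:polesGamma-Phi}(a), back through the orthogonal linear map $\l\mapsto w\l$, and then to read off which of the resulting hyperplanes contain $\l_0$. The elementary identity underlying everything is $(w\l)_\a=\l_{w^{-1}\a}$ (valid because $w$ is orthogonal), used together with $\mathcal H_{\b,m}=\mathcal H_{-\b,-m}$, which allows us to normalize any hyperplane to a positive-root representative. Since $w$ permutes $\Sigma_0$, for every $\a\in\Sigma_0^+$ exactly one of $w\a$, $-w\a$ lies in $\Sigma_0^+$.

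First I would record the two pulled-back families. By Lemma~\ref{lemma:poleszerosc}, $c(w\l)$ has at most simple poles, located only along hyperplanes $\{(w\l)_\a=-n\}=\mathcal H_{w^{-1}\a,-n}$ with $\a\in\Sigma_0^+$ and $n\in\N_0$; rewriting $w^{-1}\a$ in positive-root form yields: for each $\gamma\in\Sigma_0^+$ the poles of $c(w\cdot)$ occur only along $\mathcal H_{\gamma,m}$, with $m\le 0$ if $w\gamma\in\Sigma_0^+$ and $m\ge 0$ if $w\gamma\in-\Sigma_0^+$. Analogously, from Theorem~\ref{thm:polesGamma-Phi}(a), $\Gamma_\mu(w\l)$ has at most simple poles, and for each $\gamma\in\Sigma_0^+$ they occur only along $\mathcal H_{\gamma,m}$, with $m\ge 1$ if $w\gamma\in\Sigma_0^+$ and $m\le -1$ if $w\gamma\in-\Sigma_0^+$. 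These two families are disjoint: if $\{(w\l)_\a=-n\}=\{(w\l)_{\a'}=n'\}$ with $\a,\a'\in\Sigma_0^+$, $n\in\N_0$, $n'\in\N$, then comparing linear parts forces $w^{-1}\a=\pm w^{-1}\a'$, hence $\a=\a'$, and then $-n=n'$, which is impossible. Consequently $c(w\l)\Gamma_\mu(w\l)$ has at most simple poles, lying in the locally finite family $\mathbf H=\{\mathcal H_{\a,n}:\a\in\Sigma_0^+,\ n\in\Z\}$. A member $\mathcal H_{\a,n}$ of $\mathbf H$ contains $\l_0$ iff $(\l_0)_\a=n$, and since $n\in\Z$ this is equivalent to $\a\in\Sigma_{\l_0}$ and $n=(\l_0)_\a$; this is part~(a).

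For part~(b) I would intersect each of the two pulled-back families with the set of hyperplanes of $\mathbf H$ through $\l_0$. Because $\Re\l_0\in\overline{(\frak a^*)^+}$, for $\a\in\Sigma_{\l_0}$ one has $(\l_0)_\a\in\N_0$, and in fact $\Sigma_{\l_0}=\Sigma_{\l_0}^0\sqcup\Sigma_{\l_0}^>$ by Lemma~\ref{lemma:elementary-prop-lambda0}. Matching the value $(\l_0)_\a$ against the admissible exponents $m$ found above then gives exactly the stated lists: for $c(w\cdot)$, the exponent $0$ is admissible for both signs of $w\a$, yielding $\mathcal H_{\a,0}$ with $\a\in\Sigma_{\l_0}^0$, while a positive exponent $(\l_0)_\a$ is admissible only in the branch $w\a\in-\Sigma_0^+$, yielding $\mathcal H_{\a,(\l_0)_\a}$ with $\a\in\Sigma_{\l_0}^>\cap w(-\Sigma_0^+)$; for $\Gamma_\mu(w\cdot)$, only the branch $m\ge 1$, i.e.\ $w\a\in\Sigma_0^+$, survives, yielding $\mathcal H_{\a,(\l_0)_\a}$ with $\a\in\Sigma_{\l_0}^>\cap w(\Sigma_0^+)$. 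To see that $\mathcal H_{\a,0}$ is genuinely a simple pole of $c(w\l)$ for $\a\in\Sigma_{\l_0}^0$, isolate the factor $c_\b(w\l)$ in $c(w\l)=c_{\text{\tiny HC}}\prod_{\b'\in\Sigma_0^+}c_{\b'}(w\l)$ with $\b$ the positive representative of $\pm w\a$: along $\mathcal H_{\a,0}$ one has $(w\l)_\b=0$, so the numerator $\Gamma\big((w\l)_\b\big)$ has a simple pole while the two denominator gamma factors take the finite, nonzero values $\Gamma\big(\tfrac{m_\b}{4}+\tfrac12\big)$ and $\Gamma\big(\tfrac{m_\b}{4}+\tfrac{m_{2\b}}{2}\big)$ (here the positivity of $m$ is used); moreover no other factor $c_{\b'}(w\l)$, $\b'\ne\b$, vanishes or has a pole along $\mathcal H_{\a,0}$, since its zeros and poles lie on hyperplanes $\mathcal H_{w^{-1}\b',\cdot}$ which by the observation preceding Lemma~\ref{lemma:poleszerosc} differ from $\mathcal H_{\a,0}$; as $c_{\text{\tiny HC}}\neq 0$, the simple pole survives in $c(w\l)$.

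Finally, for part~(c) I would specialize $w\in W_{\Re\l_0}=W_{\Theta(\l_0)}$ in the lists from part~(b) using Lemma~\ref{lemma:elementary-prop-lambda0}. By \eqref{eq:wSigmalambdacap-uno}, $\Sigma_{\l_0}^>\cap w(-\Sigma_0^+)=\emptyset$, so the second family of possible poles of $c(w\cdot)$ is empty and only the hyperplanes $\mathcal H_{\a,0}$, $\a\in\Sigma_{\l_0}^0$, remain; since these were just shown always to be genuine simple poles, part~(c) may assert ``precisely'' rather than ``at most''. By \eqref{eq:wSigmalambdacap-due}, $\Sigma_{\l_0}^>\cap w(\Sigma_0^+)=\Sigma_{\l_0}^>$, so the list for $\Gamma_\mu(w\cdot)$ reduces to $\mathcal H_{\a,(\l_0)_\a}$ with $\a\in\Sigma_{\l_0}^>$. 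I expect the main obstacle to be purely organizational: keeping the sign conventions consistent while transporting hyperplanes by $w$ and normalizing to positive roots, together with the short but essential disjointness observation in part~(a) and the no-cancellation check for the poles $\mathcal H_{\a,0}$.
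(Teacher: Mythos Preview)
Your argument follows essentially the same route as the paper's: transport the known singular loci of $c$ and $\Gamma_\mu$ from Lemma~\ref{lemma:poleszerosc} and Theorem~\ref{thm:polesGamma-Phi} through $\l\mapsto w\l$ using $(w\l)_\a=\l_{w^{-1}\a}$ and the normalization $\mathcal H_{-\b,-m}=\mathcal H_{\b,m}$, then read off which hyperplanes pass through $\l_0$, with part~(c) obtained by specializing via Lemma~\ref{lemma:elementary-prop-lambda0}. You in fact supply two details that the paper leaves implicit: the disjointness of the pole families of $c(w\cdot)$ and $\Gamma_\mu(w\cdot)$, which is what justifies that the \emph{product} still has only simple poles, and the no-cancellation check showing that the simple pole of $c(w\l)$ along $\mathcal H_{\a,0}$ is not removed by a zero coming from another factor $c_{\b'}$.
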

\begin{proof}
Because of Theorem \ref{thm:polesGamma-Phi} and Lemma \ref{lemma:poleszerosc}, the possible singularities of
$c(w\l)\Gamma_\mu(w\l)$ are at most first order poles along hyperplanes of the form
$$\{\l \in \frak a_\C^*:w\l \in \mathcal H_{\a,n}\}=w^{-1}\mathcal H_{\a,n}$$
with $\a\in \Sigma_0^+$ and $n\in \Z$. Notice that $w^{-1}\mathcal H_{\a,n}=\mathcal H_{w^{-1}\a,n}$ and that
$\mathcal H_{-\b,-m}=\mathcal H_{\b,m}$.
The possible singular hyperplanes of $c(w\l)\Gamma_\mu(w\l)$ are hence of the form given in (a).
The last statement in (a) is immediate from the definition of $\mathcal H_{\a,n}$.

To prove (b), observe that $\mathcal H_{\a,n}$ is a possible singular hyperplane of $c(w\l)$ at $\l_0$
if and only if $\mathcal H_{w^{-1}\a,n}=w^{-1}\mathcal H_{\a,n}$ is a possible singular hyperplane of $c(\l)$ and
$\l_0 \in \mathcal H_{\a,n}$. The latter condition is equivalent to $\alpha \in \Sigma_{\l_0}=\Sigma_{\l_0}^0 \sqcup \Sigma_{\l_0}^>$
and $n=(\l_0)_\a$. If $\alpha \in \Sigma_{\l_0}^0$, then $n=0$ and $\mathcal H_{w^{-1}\a,0}$ is automatically a
singular hyperplane of $c(\l)$. If $\alpha \in \Sigma_{\l_0}^>$, then $n=(\l_0)_\a\in \N$.
In this case $\mathcal H_{w^{-1}\a,n}=\mathcal H_{-w^{-1}\a,-n}$ is a possible singular hyperplane of $c(\l)$ if and only if
$-w^{-1}\a \in \Sigma_0^+$.
Suppose now $\a \in \Sigma_{\l_0}^0$ and choose $\b \in \Sigma_0^+$ so that $w\a\in\{\pm \b\}$.
Because of the term $\Gamma\big((w\l)_\b\big)$ at the numerator of the factor $c_\b(w\l)$, the function $c(w\l)$
indeed admits a simple pole along $\{\l \in \mathfrak a_\C^*:(w\l)_\b=0\}=\{\l\in\frak a_\C^*: \l_{w^{-1}\b}=0\}=\mathcal H_{\a,0}$\,.

For $\Gamma_\mu(w\l)$, the hyperplane $\mathcal H_{\a,n}$ is a possible singular hyperplane at $\l_0$
if and only if $\mathcal H_{w^{-1}\a,n}=w^{-1}\mathcal H_{\a,n}$ is a possible singular hyperplane of $\Gamma_\mu(\l)$ and
$\l_0 \in \mathcal H_{\a,n}$. This is equivalent to saying that $\mathcal H_{w^{-1}\a,n}$ is a possible singular hyperplane
of $\Gamma_\mu(\l)$ and $n=(\l_0)_\a \in \N$, i.e. that $w^{-1}\a \in \Sigma_0^+$, $\a \in \Sigma_{\l_0}^>$ and
$n=(\l_0)_\a$.

Part (c) is a consequence of (b), (\ref{eq:wSigmalambdacap-uno}) and (\ref{eq:wSigmalambdacap-due}).
\end{proof}

To compute the exponential series expansion of the
hypergeometric function $\varphi_{\l_0}(x)$, we shall need some
elementary facts on polynomial differential operators. We collect
them in the following lemma whose proof is straightforward.

\begin{Lemma} \label{lemma:polydiff}
Let $\l_0\in \frak a_\C^*$, and let $I \subset \frak a_\C^* \times \frak a_\C^*$ be a finite set
so that $\inner{\l_0-\nu_2}{\nu_1}=0$ for all $(\nu_1,\nu_2) \in I$. Then the following properties
hold.
\begin{enumerate}
\thmlist
\item
Define polynomial functions $p_I$ and $\pi_I$ on $\frak a_\C^*$ by
\begin{align}
p_I(\l)&:=\prod_{(\nu_1,\nu_2) \in I} \inner{\l-\nu_2}{\nu_1}\,, \label{eq:pI} \\
\pi_I(\l)&:=\prod_{(\nu_1,\nu_2) \in I} \inner{\l}{\nu_1}\,. \label{eq:piI}
\end{align}
So $p_I(\l)=\pi_I(\l)+\widetilde{p_I}(\l)$ with $\deg \widetilde{p_I} < \deg p_I=\deg \pi_I=|I|$.
Then:
\smallskip

\begin{enumerate}
\renewcommand{\theenumii}{\arabic{enumii}}
\renewcommand{\labelenumii}{\theenumii)}
\item $\partial(\pi_I)(p)=0 \;$ if $p \in \polya$ and $\deg p< |I|$\,,
\smallskip

\item $\partial(p)(p_I)\big|_{\l=\l_0}=0 \;$ if $p \in \polya$ and $\deg p< |I|$\,,
\smallskip

\item
$\partial(\pi_I)(p_I)\big|_{\l=\l_0}=\partial(\pi_I)(\pi_I)>0$.
\smallskip
\end{enumerate}

\item
For every differentiable function $f$ on $\frak a_\C^*$ and every $x \in \frak a$ we have
\begin{equation}
\partial(\pi_I)\Big( f(\l)e^{w\l(x)}\Big)\Big|_{\l=\l_0}
= \sum_{J \sqcup L=I} \big(
\partial(\pi_J)f(\l)\big)\big|_{\l=\l_0} \; \pi_L(w^{-1}x)
e^{w\l_0(x)}\,
\end{equation}
where
\begin{equation}
\pi_L(x):=\prod_{(\nu_1,\nu_2) \in L} \nu_1(x)\,. \label{eq:pJa}
\end{equation}
\end{enumerate}
\end{Lemma}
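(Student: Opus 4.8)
The plan is to verify the three sub-items of part (1) and the identity in part (2) by elementary manipulations, keeping track only of leading-order terms of the polynomials involved. Throughout I write $n = |I|$ and recall the standard fact that for $p, q \in \polya$ with $q$ homogeneous of degree $d$, the pairing $\partial(q)(p)$ picks out (up to a constant) the degree-$d$ part of $p$; in particular $\partial(q)(p) = 0$ whenever $\deg p < d$, and for homogeneous $p,q$ of the same degree $d$ one has $\partial(q)(p) = \langle q, p\rangle_{\mathrm{S}}$ for the canonical symmetric pairing on $\polya$, which is positive definite on the real points. Item (1a.1) is then immediate: $\pi_I$ is homogeneous of degree $n$, so $\partial(\pi_I)$ annihilates every polynomial of degree $<n$. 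For (1a.2), write $p_I(\l) = \pi_I(\l) + \widetilde{p_I}(\l)$ with $\deg\widetilde{p_I} < n$. Since $\deg p < n = \deg\pi_I$, applying $\partial(p)$ lowers degree, so $\partial(p)(\pi_I)$ has degree $n - \deg p > 0$ and hence $\partial(p)(\pi_I) = \partial(p)(\pi_I)$ has \emph{no} constant term; meanwhile $\partial(p)(\widetilde{p_I})$ is a polynomial of degree $< n - \deg p$. The point is that the constant term (i.e.\ the value at \emph{any} point) need not vanish, so one cannot argue purely by degree; instead one uses the hypothesis $\langle \l_0 - \nu_2, \nu_1\rangle = 0$ for all $(\nu_1,\nu_2)\in I$. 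I would argue by induction on $n$: differentiating the product $p_I = \prod_{(\nu_1,\nu_2)} \langle\l-\nu_2,\nu_1\rangle$ by the Leibniz rule, $\partial(p)$ of degree $<n$ distributes over at most $n-1$ factors at a time, so every term in $\partial(p)(p_I)$ still contains at least one undifferentiated factor $\langle\l-\nu_2,\nu_1\rangle$, which vanishes at $\l=\l_0$. For (1a.3), the same Leibniz expansion of $\partial(\pi_I)(p_I)$ shows that the only term \emph{not} containing an undifferentiated $p_I$-factor is the one where all $n$ directional derivatives hit distinct factors, and that term is exactly $\partial(\pi_I)(\pi_I)$ (since replacing $\langle\l-\nu_2,\nu_1\rangle$ by its derivative is the same as using $\langle\l,\nu_1\rangle$); all other terms vanish at $\l_0$ by the hypothesis, and $\partial(\pi_I)(\pi_I) = \langle\pi_I,\pi_I\rangle_{\mathrm S} > 0$ because $\pi_I$ is a nonzero real polynomial.

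For part (2), the key observation is that $\partial(\pi_I)$ is a constant-coefficient operator, so acting on a product $f(\l)e^{w\l(x)}$ it obeys the Leibniz rule, and $\partial(\nu_1)\bigl(e^{w\l(x)}\bigr) = \nu_1(w^{-1}x)\,e^{w\l(x)}$ since $\partial(\nu_1)$ differentiates in the $\l$-variable and $w\l(x) = \l(w^{-1}x)$. Writing $\pi_I = \prod_{(\nu_1,\nu_2)\in I}\langle\,\cdot\,,\nu_1\rangle$ as a product of $n$ degree-one factors and distributing each factor either onto $f$ or onto the exponential, one gets a sum over all ordered assignments, which regroups as a sum over partitions $I = J\sqcup L$: the factors in $J$ act on $f$ (producing $\partial(\pi_J)f$), the factors in $L$ act on the exponential (producing $\prod_{(\nu_1,\nu_2)\in L}\nu_1(w^{-1}x) = \pi_L(w^{-1}x)$ in the notation of \eqref{eq:pJa}, times $e^{w\l(x)}$). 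Evaluating at $\l = \l_0$ gives the stated formula. The only mild care needed is that the exponential factor $e^{w\l(x)}$ is common to all terms and comes out as $e^{w\l_0(x)}$ after evaluation.

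I do not expect any serious obstacle here: the whole lemma is bookkeeping with the Leibniz rule and the elementary symbol calculus of constant-coefficient operators, and the hypothesis $\langle\l_0-\nu_2,\nu_1\rangle = 0$ is precisely what kills all the ``cross'' terms in (1a.2) and (1a.3). The one spot that deserves a careful sentence is the positivity claim $\partial(\pi_I)(\pi_I) > 0$ in (1a.3): one must note that the $\nu_1$ appearing are genuine elements of $\frak a_\C^*$ (not necessarily real), so $\pi_I$ need not have real coefficients, and ``$>0$'' should be read via the identification of $\partial(\pi_I)(\pi_I)$ with the Fischer/Bombieri-type inner product $\langle\pi_I,\overline{\pi_I}\rangle$ — I would simply remark that $\partial(\pi_I)(\pi_I) = \sum_{\sigma \in S_n}\prod_{i}\langle\nu_1^{(i)},\nu_1^{(\sigma(i))}\rangle$ up to a positive combinatorial constant, equivalently the permanent-type expression, and that this is the squared norm $\|\pi_I\|^2$ in the standard pairing normalized so that monomials are orthogonal, hence strictly positive since $\pi_I \neq 0$. (If the authors only need $\partial(\pi_I)(\pi_I) \neq 0$ downstream, the argument is even shorter.) Everything else is a direct computation that I would present in two or three lines each.
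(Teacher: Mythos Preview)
Your argument is correct and is exactly the Leibniz-rule bookkeeping the paper has in mind (the paper offers no proof beyond declaring the lemma ``straightforward''). One correction, though, on the positivity in (a.3): your proposed reading of $\partial(\pi_I)(\pi_I)$ as the Fischer/Bombieri norm $\langle\pi_I,\overline{\pi_I}\rangle$ is not right, since the latter is $\partial(\overline{\pi_I})(\pi_I)$, with conjugation. Without conjugation, $\partial(\pi_I)(\pi_I)$ can fail to be positive for genuinely complex $\nu_1$; for instance a single isotropic $\nu_1$ with $\langle\nu_1,\nu_1\rangle=0$ already gives $\partial(\pi_I)(\pi_I)=0$. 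The actual resolution is simpler than your workaround: in every use the paper makes of this lemma (see the set $I$ in \eqref{eq:setI}), the first components $\nu_1$ are positive roots $\alpha\in\Sigma_0^+\subset\mathfrak a^*$, hence real, so $\pi_I$ has real coefficients and $\partial(\pi_I)(\pi_I)$ is the ordinary Fischer square-norm, strictly positive because $\pi_I\neq 0$. In effect the lemma carries the tacit hypothesis $\nu_1\in\mathfrak a^*$, and with that your proof goes through verbatim.
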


We now contruct the polynomials $p(\l)$ and $\pi(\l)$ appearing on the right-hand side of
(\ref{eq:HOspherical-diffop}).

For $\alpha\in \Sigma_{\l_0}$ we set $n_\a=(\l_0)_\a \in \N_0$.
Let $w \in W$. Define polynomial functions $\pi_0(\l)$,
$\pi_{1}(\l)$, $p_{w,+}(\l)$, $p_{w,-}(\l)$, $p_1(\l)$ and $p(\l)
\in \polya$ by
\begin{align}
\pi_0(\l)&=\prod_{\a \in \Sigma_{\l_0}^0} \inner{\l}{\a}\,, \label{eq:pi0}\\
\pi_1(\l)&=\prod_{\a \in \Sigma_{\l_0}^>} \inner{ \l}{\a}\,,
\label{eq:pi1} \\
\pi(\l) & = \pi_0(\l) \pi_{1}(\l)=\prod_{\a \in \Sigma_{\l_0}} \inner{\l}{\a}\,, \label{eq:pilambda0}\\
p_{w,+}(\l)&=\prod_{\a \in \Sigma_{\l_0}^> \cap w(\Sigma_0^+)} (\inner{\l}{\a}-n_\a\inner{\a}{\a})\,,
\label{eq:pwplus}\\
p_{w,-}(\l)&=\prod_{\a \in \Sigma_{\l_0}^> \cap w(-\Sigma_0^+)} (\inner{\l}{\a}-n_\a\inner{\a}{\a})\,,
\label{eq:pwminus}\\
p_1(\l)&=p_{w,+}(\l)p_{w,-}(\l)\,, \label{eq:p1}\\
p(\l)&=\pi_0(\l)p_1(\l)\,.
\label{eq:p}
\end{align}
We adopt the convention that empty products are equal to the constant $1$.
Notice that $p_{w,+}(\l)p_{w,-}(\l)$ is in fact independent of $w \in W$ and that
\begin{equation}
\label{eq:setI} p(\l)=p_I(\l) \quad\text{and}\quad
\pi(\l)=\pi_I(\l)  \quad\text{for}\quad I=\{(\a,n_\a \a): \a\in
\Sigma_{\l_0}, n_\a=(\l_0)_\a\}\,.
\end{equation}

\begin{Prop}\label{prop:asymptoticsF-beginning}
Keep the assumptions of Lemma \ref{lemma:singcGammamu}.
\begin{enumerate}
\thmlist \item There is a neighborhood $U$ of $\l_0$ with compact
closure $\overline{U}$ so that $\overline{U}\cap \mathcal
H_{\a,n}\neq \emptyset$ if and only if $\a \in \Sigma_{\l_0}$ and
$n=(\l_0)_\a$. \item For all $w \in W$ and $\mu\in
2\Lambda\setminus \{0\}$, the functions
$\pi_0(\l)p_{w,-}(\l)c(w\l)$ and $p_{w,+}(\l)\Gamma_\mu(w\l)$ are
holomorphic in a neighborhood of $\overline{U}$. \item For all
$x\in A_\C$ where $\varphi_{\l_0}(x)$ is defined, we have
\begin{equation} \label{eq:F-derivative}
c_0 \varphi_{\l_0}(x)=\partial(\pi)\Big( p(\l)\varphi_\l(x)\Big)
\Big|_{\l=\l_0}
\end{equation}
where $c_0=\partial(\pi)(p)=\partial(\pi)(\pi)>0$. \item Let $x_0
\in \frak a^+$ be fixed. Then
\begin{equation} \label{eq:F-der-series}
c_0 \varphi_{\l_0}(x)=\sum_{\mu \in 2\Lambda} \sum_{w \in W}
\partial(\pi)\Big( p(\l)c(w\l)\Gamma_\mu(w\l)e^{(w\l-\rho-\mu)(x)}\Big) \Big|_{\l=\l_0}
\end{equation}
where the series on the right-hand side converges uniformly in
$x\in x_0+\overline{\frak a^+}$.
\end{enumerate}
\end{Prop}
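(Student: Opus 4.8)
The plan is to establish the four parts in order, each building on the previous one, with the polynomial bookkeeping of Lemmas \ref{lemma:singcGammamu} and \ref{lemma:polydiff} doing most of the work.

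For part (a), I would start from the fact that the family $\mathbf H=\{\mathcal H_{\a,n}:\a\in\Sigma_0^+,\ n\in\Z\}$ is locally finite in $\frakacs$. Hence only finitely many hyperplanes of $\mathbf H$ meet any prescribed compact set, and among those, only finitely many pass through $\l_0$ itself; these latter are exactly the $\mathcal H_{\a,n}$ with $\a\in\Sigma_{\l_0}$ and $n=(\l_0)_\a$, by definition of $\Sigma_{\l_0}$. Choosing $U$ to be a sufficiently small open ball around $\l_0$, one can arrange that $U$ avoids every hyperplane of $\mathbf H$ not passing through $\l_0$, which gives the claimed equivalence for $\overline U$ after shrinking the radius slightly.

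For part (b), I would combine part (c) of Lemma \ref{lemma:singcGammamu} with the definitions (\ref{eq:pi0})--(\ref{eq:p}). By that lemma, on $\overline U$ the only possible poles of $c(w\l)$ are simple ones along $\mathcal H_{\a,0}$ with $\a\in\Sigma_{\l_0}^0$, and the only possible poles of $\Gamma_\mu(w\l)$ are simple ones along $\mathcal H_{\a,n_\a}$ with $\a\in\Sigma_{\l_0}^>\cap w(\Sigma_0^+)$; however, when $w$ is not in $W_{\Re\l_0}$ one must instead use part (b), so the argument splits into the $W$-translate of the case $w\in W_{\Re\l_0}$ using the $W$-invariance of the generic set and (\ref{eq:wSigmalambda00-due}). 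The factor $\pi_0(\l)=\prod_{\a\in\Sigma_{\l_0}^0}\inner{\l}{\a}$ vanishes to first order on each $\mathcal H_{\a,0}$, $\a\in\Sigma_{\l_0}^0$, hence $\pi_0(\l)c(w\l)$ has no pole there; and $\mathcal H_{\a,n_\a}$ for $\a\in\Sigma_{\l_0}^>\cap w(\Sigma_0^+)$ is a zero set of the factor $\inner{\l}{\a}-n_\a\inner{\a}{\a}$ appearing in $p_{w,+}$, so $p_{w,+}(\l)\Gamma_\mu(w\l)$ is regular. The symmetric bookkeeping with $p_{w,-}$ handles the $w(-\Sigma_0^+)$ poles of $c$ coming from part (b). One should note in passing that since the product $p_{w,+}p_{w,-}=p_1$ is $w$-independent, $p(\l)c(w\l)\Gamma_\mu(w\l)=\pi_0(\l)p_{w,-}(\l)c(w\l)\cdot p_{w,+}(\l)\Gamma_\mu(w\l)$ is a product of two functions each holomorphic near $\overline U$.

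For part (c), I would observe that for generic $\l$, (\ref{eq:FfromPhi}) gives $\varphi_\l=\sum_{w\in W}c(w\l)\Phi_{w\l}$, so $p(\l)\varphi_\l(x)$ extends holomorphically in $\l$ across $\l_0$ by part (b) together with Theorem \ref{thm:polesGamma-Phi}(b); thus $\partial(\pi)(p(\l)\varphi_\l(x))|_{\l=\l_0}$ makes sense. To identify it with a multiple of $\varphi_{\l_0}$, I would argue as in the classical case (\ref{eq:varphi0HC}): the function $\l\mapsto p(\l)\varphi_\l(x)$ takes values in the (finite-dimensional) solution space of the hypergeometric system; more precisely, each $D_q$ with $q\in\polya^W$ commutes with $\partial(\pi)$ acting in $\l$, and $D_q\bigl(p(\l)\varphi_\l\bigr)=q(\l)p(\l)\varphi_\l$, so applying $\partial(\pi)$ in $\l$ and using the Leibniz rule together with Lemma \ref{lemma:polydiff}(1.1) (which kills all derivatives of $q(\l)$ of order $<|I|=\deg\pi$ when paired against lower-degree factors) shows that $\psi(x):=\partial(\pi)(p(\l)\varphi_\l(x))|_{\l=\l_0}$ satisfies $D_q\psi=q(\l_0)\psi$ for all $q\in\polya^W$; it is $W$-invariant and analytic on $\fa$. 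Hence $\psi$ is a scalar multiple of $\varphi_{\l_0}$, and evaluating at $x=0$ — where $\varphi_\l(0)=1$ for all $\l$ — gives $\psi(0)=\partial(\pi)(p(\l))|_{\l=\l_0}$, which by Lemma \ref{lemma:polydiff}(1.3) and (\ref{eq:setI}) equals $\partial(\pi)(\pi)=c_0>0$. This yields (\ref{eq:F-derivative}). I expect the verification that $\psi$ solves the full system (the commutation of $\partial(\pi)$ in $\l$ with the $x$-operators $D_q$, and the cancellation of the unwanted lower-order terms) to be the main technical obstacle, though it is a standard Leibniz-rule computation once set up correctly.

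Finally, for part (d), fix $x_0\in\fa^+$. By part (b), each $\pi_0(\l)p_{w,-}(\l)c(w\l)$ is holomorphic near $\overline U$ and $p_{w,+}(\l)\Gamma_\mu(w\l)$ is holomorphic near $\overline U$ for every $\mu$; set $d(\l)=p_{w,+}(\l)$ in Lemma \ref{lemma:conv-diffHCseries} to conclude that the series $e^{(w\l-\rho)(x)}\sum_{\mu\in2\Lambda}p_{w,+}(\l)\Gamma_\mu(w\l)e^{-\mu(x)}$ converges absolutely and uniformly, and can be differentiated term-by-term with respect to any $\partial(p)$, in $(\l,x)\in U\times(x_0+\overline{\fa^+})$. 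Multiplying by the holomorphic factor $\pi_0(\l)p_{w,-}(\l)c(w\l)$ preserves these properties, so $p(\l)c(w\l)\Phi_{w\l}(x)=\sum_{\mu}p(\l)c(w\l)\Gamma_\mu(w\l)e^{(w\l-\rho-\mu)(x)}$ converges, with uniform term-by-term $\partial(\pi)$-differentiability, on $U\times(x_0+\overline{\fa^+})$. Summing over the finite group $W$ gives $p(\l)\varphi_\l(x)=\sum_{w}\sum_{\mu}p(\l)c(w\l)\Gamma_\mu(w\l)e^{(w\l-\rho-\mu)(x)}$ on that set, for generic $\l$; since both sides are holomorphic in $\l$ near $\l_0$ by part (b), the identity persists at $\l=\l_0$ and in a neighbourhood. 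Applying $\partial(\pi)$ in $\l$, evaluating at $\l=\l_0$, and invoking part (c) then yields (\ref{eq:F-der-series}), with the interchange of $\partial(\pi)$ and $\sum_\mu$ justified by the uniform convergence of the differentiated series; uniform convergence in $x\in x_0+\overline{\fa^+}$ is exactly what Lemma \ref{lemma:conv-diffHCseries} provides. Since $x_0\in\fa^+$ was arbitrary, this covers all of $\fa^+$, and hence (by analyticity and $W$-invariance) all $x\in\fa$.
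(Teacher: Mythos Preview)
Your argument is correct; parts (a), (b) and (d) follow essentially the same lines as the paper's proof. For part (c), however, you take a genuinely different route.

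The paper proves (\ref{eq:F-derivative}) by a direct Leibniz computation: since $\partial(\pi)=\prod_{(\nu_1,\nu_2)\in I}\partial_{x_{\nu_1}}$ is a product of commuting first-order operators, one has
\[
\partial(\pi)\big(p(\l)\varphi_\l(x)\big)\big|_{\l=\l_0}
=\sum_{J\sqcup L=I}\big(\partial(\pi_J)(p)\big)\big|_{\l=\l_0}\,\big(\partial(\pi_L)\varphi_\l(x)\big)\big|_{\l=\l_0}\,.
\]
By Lemma \ref{lemma:polydiff}(a), part 2), every term with $J\subsetneq I$ vanishes, leaving only $J=I$, $L=\emptyset$, which is $\partial(\pi)(\pi)\,\varphi_{\l_0}(x)$. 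This is a one-line identity and uses nothing about $\varphi_\l$ beyond its holomorphy in $\l$.

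You instead show that $\psi(x)=\partial(\pi)(p(\l)\varphi_\l(x))|_{\l=\l_0}$ is a $W$-invariant analytic joint eigenfunction of $\D$ with eigenvalue $\l_0$, then invoke the uniqueness characterization of $\varphi_{\l_0}$ and evaluate at $x=0$ to find the constant. This works, but the mechanism you cite is slightly off: it is not Lemma \ref{lemma:polydiff}(a), part 1), killing derivatives of $q$, but rather part 2) forcing $\partial(\pi_L)(p(\l)\varphi_\l(x))|_{\l=\l_0}=0$ whenever $|L|<|I|$ (expand once more by Leibniz and use $\partial(\pi_R)(p)|_{\l=\l_0}=0$ for $|R|<|I|$). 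Once you see this, you have in fact reproduced the paper's computation inside your eigenfunction argument, and the detour through uniqueness becomes unnecessary. Your approach has the conceptual advantage of explaining \emph{why} the right-hand side must be a multiple of $\varphi_{\l_0}$; the paper's has the advantage of being shorter and not requiring the uniqueness of $\varphi_{\l_0}$ among $W$-invariant analytic solutions.

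A minor remark on (d): the statement only asserts (\ref{eq:F-der-series}) for $x\in x_0+\overline{\fa^+}$, so your closing sentence extending to all of $\fa$ is unneeded (and the series expansion does not make sense at the walls).
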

\begin{proof}
Parts (a) and (b) are immediate consequences of Lemma \ref{lemma:singcGammamu} and the fact that the hyperplanes
$\mathcal H_{\b,n}$ ($\b\in \Sigma_0^+, n \in \Z$) form a locally finite family.

To prove (c), notice first that by parts 1) and 2) of Lemma
\ref{lemma:polydiff} (a),  we have for $J \subset I$
$$\partial(\pi_J)(p_I)\Big|_{\l=\l_0}=\delta_{J,I} \partial(\pi_I)(\pi_I)\,,$$
where $\delta_{J,I}$ is Kronecker's delta. Hence parts (b) and (c)
of the same lemma give for $I$ as in (\ref{eq:setI}):
\begin{align*}
\partial(\pi)\Big( p(\l)\varphi_\l(x)\Big)\Big|_{\l=\l_0}
&=\sum_{J\sqcup L=I} \Big(\partial(\pi_J)(p)(\l)\Big)\Big|_{\l=\l_0} \Big(\partial(\pi_L)\varphi_\l(x)\Big)\Big|_{\l=\l_0}\\
&=\partial(\pi)(\pi) \varphi_{\l_0}(x)\,.
\end{align*}

According to (b) and Lemma \ref{lemma:conv-diffHCseries},  the series
$$\sum_{\mu \in 2\Lambda} p_{w,+}(\l)\Gamma_\mu(w\l) e^{(w\l-\rho-\mu)(x)}$$
converges to $p_{w,+}(\l)\Phi_{w\l}(x)$ uniformly in $\overline{U} \times (x_0 + \overline{\frak a^+})$.
Moreover, it can be differentiated term-by-term.
If $\l$ is regular, then $\varphi_\l(x)=\sum_{w \in W} c(w\l) \Phi_{w\l}(x)$ for $x \in \mathfrak a^+$.
Multiplying both sides by $p(\l)$, we therefore get for all regular $\l\in U$
\begin{equation} \label{eq:pF}
p(\l)\varphi_\l(x)=\sum_{w \in W} \pi_0(\l)p_{w,-}(\l) c(w\l) \sum_{\mu \in 2\Lambda}
p_{w,+}(\l)\Gamma_\mu(w\l) e^{(w\l-\rho-\mu)(x)}\,.
\end{equation}
Since both sides of (\ref{eq:pF}) are holomorphic on $U$, this equality extends to all of $U$.
Part (d) now follows from (c) and term-by-term differentiation using Lemma \ref{lemma:conv-diffHCseries}.
\end{proof}

A careful computation of  (\ref{eq:F-der-series}) is what leads to
an expansion of $\varphi_{\l}(x)$ for all $\l.$  We first consider the
terms corresponding to $w \in W_{\l_0}$.

\begin{Lemma} \label{lemma:b0}
Let the notation be as above. Let $\l_0\in \mathfrak{a}_\C^*$ be such that
$\Re\l_0 \in \overline{(\frak a^*)^+}$. Define
\begin{equation}
\label{eq:b0}
b_0(\l)=\pi_0(\l) c(\l)\,.
\end{equation}
Then the following properties hold for $w \in W_{\l_0}$.
\begin{enumerate}
\thmlist
 \item $\pi_0(w\l)=(\det w) \pi_0(\l)$ for all $\l \in \frak a_\C^*$.
 \item The point $\l=\l_0$ is neither a zero nor a pole of the function $b_0(w\l)$.
 \item $b_0(w\l_0)=b_0(\l_0)$ is a nonzero constant. Moreover, if $\l_0 \in \overline{(\frak a^*)^+}$, then $b_0(\l_0)>0$.
 \end{enumerate}
 \end{Lemma}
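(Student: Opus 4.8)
The plan is to prove the covariance (a) first, by a sign computation in $W_{\l_0}$ regarded as a reflection group, and then to obtain (b) and (c) by combining (a) with a direct inspection of the explicit product formula (\ref{eq:c})--(\ref{eq:calpha}) for $c$ in a neighborhood of $\l_0$. For (a): recall from the proof of Lemma \ref{lemma:elementary-prop-lambda0} that $W_{\l_0}$ is the Weyl group of the closed subsystem $\Sigma_{\l_0}^0\sqcup(-\Sigma_{\l_0}^0)$, with $\Sigma_{\l_0}^0$ as positive system. Hence $w\in W_{\l_0}$ permutes $\Sigma_{\l_0}^0\sqcup(-\Sigma_{\l_0}^0)$, and by (\ref{eq:wSigmalambda00-uno}) applied to $w^{-1}$ I can write $w^{-1}\a=\varepsilon_\a\gamma_\a$ for $\a\in\Sigma_{\l_0}^0$, with $\gamma_\a\in\Sigma_{\l_0}^0$, $\varepsilon_\a\in\{\pm1\}$, the map $\a\mapsto\gamma_\a$ being a bijection of $\Sigma_{\l_0}^0$. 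Since $W$ acts by isometries, $\inner{w\l}{\a}=\inner{\l}{w^{-1}\a}=\varepsilon_\a\inner{\l}{\gamma_\a}$, whence $\pi_0(w\l)=\big(\prod_{\a\in\Sigma_{\l_0}^0}\varepsilon_\a\big)\pi_0(\l)$; and $\prod_\a\varepsilon_\a=(-1)^{\#\{\a\in\Sigma_{\l_0}^0:\,w^{-1}\a\in-\Sigma_{\l_0}^0\}}=\det\big(w|_{\Span\Sigma_{\l_0}^0}\big)$ by the length/sign identity in $W_{\l_0}$, which equals $\det w$ because $w$ fixes $(\Span\Sigma_{\l_0}^0)^\perp$ pointwise.

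For (b) and (c), by (a) we have $b_0(w\l)=(\det w)\,\pi_0(\l)\,c(w\l)$, so it suffices to understand $\pi_0(\l)c(w\l)$ near $\l_0$. Since $W_{\l_0}\subseteq W_{\Re\l_0}$ (Lemma \ref{lemma:elementary-prop-lambda0}), Lemma \ref{lemma:singcGammamu}(c) says that near $\l_0$ the function $c(w\l)$ has exactly simple poles along $\mathcal H_{\a,0}$, $\a\in\Sigma_{\l_0}^0$; these are precisely the simple zero hyperplanes of $\pi_0$, and, the family $\{\mathcal H_{\b,n}\}$ being locally finite (Proposition \ref{prop:asymptoticsF-beginning}(a)), the product $\pi_0(\l)c(w\l)$ is holomorphic on a neighborhood of $\l_0$. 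This already excludes a pole; it remains to compute its value at $\l_0$ and see it is nonzero.

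For the evaluation I would write $c_\a(\l)=\widetilde c_\a(\l_\a)$ with $\widetilde c_\a(t)=2^{-t}\Gamma(t)/[\Gamma(\tfrac t2+\tfrac{m_\a}4+\tfrac12)\Gamma(\tfrac t2+\tfrac{m_\a}4+\tfrac{m_{2\a}}2)]$ and use two elementary facts about the rank-one factors: (i) since $m_\a>0$, the function $t\mapsto t\,\widetilde c_\a(t)=2^{-t}\Gamma(t+1)/[\Gamma(\tfrac t2+\tfrac{m_\a}4+\tfrac12)\Gamma(\tfrac t2+\tfrac{m_\a}4+\tfrac{m_{2\a}}2)]$ extends holomorphically across $t=0$ with value $d_\a:=1/[\Gamma(\tfrac{m_\a}4+\tfrac12)\Gamma(\tfrac{m_\a}4+\tfrac{m_{2\a}}2)]>0$; (ii) if $\Re s\geq0$ and $s\neq0$ then $s$ avoids both the pole set $-\N_0$ and the (strictly negative real) zero set of $\widetilde c_\a$, so $\widetilde c_\a(s)\neq0$, and $\widetilde c_\a(s)>0$ if in addition $s>0$. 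For $\a\in\Sigma_0^+\setminus\Sigma_{\l_0}^0$ one has $(\l_0)_\a\neq0$ with $\Re\big((\l_0)_\a\big)\geq0$; using $w^{-1}(\Sigma_0^+\setminus\Sigma_{\l_0}^0)=\Sigma_0^+\setminus\Sigma_{\l_0}^0$ (from (\ref{eq:wSigmalambda00-due})), the $W$-invariance of $m$, and $(w\l)_\a=\l_{w^{-1}\a}$, the corresponding part of $c(w\l)$ reindexes to $\prod_{\gamma\in\Sigma_0^+\setminus\Sigma_{\l_0}^0}c_\gamma(\l)$, which by (ii) is holomorphic and nonzero at $\l_0$ and independent of $w$. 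Pairing each factor $\inner{\l}{\gamma_\a}$ of $\pi_0$ with $c_\a(w\l)=\widetilde c_\a(\varepsilon_\a\l_{\gamma_\a})$ and applying (i) gives the value $\varepsilon_\a\inner{\gamma_\a}{\gamma_\a}d_{\gamma_\a}$ of that pair at $\l_0$; collecting all the factors together with the sign $\prod_\a\varepsilon_\a=\det w$ from (a) yields
\begin{equation*}
b_0(w\l_0)=(\det w)^2\,c_{\mathrm{HC}}\prod_{\gamma\in\Sigma_{\l_0}^0}\inner{\gamma}{\gamma}\,d_\gamma\prod_{\a\in\Sigma_0^+\setminus\Sigma_{\l_0}^0}c_\a(\l_0)\,,
\end{equation*}
which is independent of $w$ (so $b_0(w\l_0)=b_0(\l_0)$) and nonzero; and if $\l_0\in\overline{(\frak a^*)^+}$ then $(\l_0)_\a>0$ for every $\a\in\Sigma_0^+\setminus\Sigma_{\l_0}^0$, so every factor above is positive ($c_{\mathrm{HC}}>0$ because $c(\rho)=1$ with $\rho$ regular dominant and each $c_\a(\rho)>0$), giving $b_0(\l_0)>0$.

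The main difficulty is essentially bookkeeping: one has to pair the linear factors of $\pi_0$ with the correct gamma-singular factors of $c(w\l)$, track the reflection signs $\varepsilon_\a$ through this matching, and verify that they recombine to exactly $\det w$, so that the $\det w$ produced in part (a) cancels and $b_0(w\l_0)$ comes out $w$-independent. All the structural input required — that $W_{\l_0}$ is a reflection group with positive system $\Sigma_{\l_0}^0$, the behaviour of $w\in W_{\l_0}$ on $\Sigma_0^+$ encoded in (\ref{eq:wSigmalambda00-uno})--(\ref{eq:wSigmalambda00-due}), and the pole/zero and positivity structure of the $c_\a$ — is available from Lemmas \ref{lemma:elementary-prop-lambda0} and \ref{lemma:singcGammamu} and the explicit formula (\ref{eq:calpha}).
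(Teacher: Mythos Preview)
Your proof is correct and uses the same ingredients as the paper: the reflection-group structure of $W_{\l_0}$ for (a), and the explicit product formula (\ref{eq:c})--(\ref{eq:calpha}) together with the pole/zero structure of the gamma function for (b)--(c). You do, however, work much harder than necessary in (b)--(c): since $w\in W_{\l_0}$ means $w\l_0=\l_0$, the map $\l\mapsto b_0(w\l)$ is just $b_0$ precomposed with the linear isomorphism $w$, so its zero/pole behaviour at $\l_0$ is exactly that of $b_0$ at $w\l_0=\l_0$, and the equality $b_0(w\l_0)=b_0(\l_0)$ is tautological --- this reduces everything to the single case $w=\mathrm{id}$ and makes all your $\varepsilon_\a$-bookkeeping (though carried out correctly) unnecessary, which is why the paper's proof is one line.
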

\begin{proof}
Part (a) is classical, as $W_{\l_0}$ is the Weyl group of $\Sigma_{\l_0}^0 \sqcup (-\Sigma_{\l_0}^0)$.
 All other properties are an immediate consequence of the defining formula (\ref{eq:c})
 of the $c$-function, the properties of the gamma function, the fact that $w\l_0=\l_0$ and
 the assumption $\Re\l_0 \in \overline{(\frak a^*)^+}$.
\end{proof}

According to (c) in Lemma \ref{lemma:singcGammamu}, multiplication by $\pi_0$ removes all the singularities of $c(w\l)$ at
$\l_0$ for all $w \in W_{\l_0}$. To compute the corresponding terms in (\ref{eq:F-der-series}), we first rewrite them in
terms of the function $b_0$. Indeed, for $w \in W_{\l_0}$ we have by Lemma \ref{lemma:b0} (a):
\begin{equation} \label{eq:c-and-b}
p(\l)c(w\l)=\pi_0(\l)c(w\l)p_1(\l)=(\det w) b_0(w\l)p_1(\l)\,.
\end{equation}
The terms in (\ref{eq:F-der-series}) which correspond to $w \in W_{\l_0}$ and $\mu=0$ are then given by the following lemma.
Recall the set $I$ introduced in (\ref{eq:setI}) and recall that $\Gamma_0=1$.

\begin{Lemma} \label{lemma:termWl0-0}
Let $w \in W_{\l_0}$. Then for $x \in \frak a$ we have
\begin{multline}
\partial(\pi) \Big( b_0(w\l) p_1(\l) e^{(w\l-\rho)(x)}\Big)\Big|_{\l=\l_0}=\\
=\Big\{b_0(\l_0) \Big[ \sum_{\substack{J\sqcup L=I\\
|J|=|\Sigma_{\l_0}^>|}} \partial(\pi_J)(p_1)\big|_{\l=\l_0}
\pi_{wL}(x)\Big] +f_{w,\l_0}(x)\Big\} e^{(\l_0-\rho)(x)}
\end{multline}
where $f_{w,\l_0}(x)$ denotes a polynomial function of $x$ of degree $< \deg \pi_0=|\Sigma_{\l_0}^0|$.
\end{Lemma}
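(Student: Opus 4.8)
The plan is to reduce the computation to the Leibniz-type formula of Lemma~\ref{lemma:polydiff}(b), followed by a bookkeeping of orders of vanishing at $\l_0$. First I would pull the $\l$-independent factor $e^{-\rho(x)}$ out of $\partial(\pi)$ and record two facts: since $w\in W_{\l_0}$ we have $(w\l_0)(x)=\l_0(x)$, and by Lemma~\ref{lemma:b0}(b) the function $\l\mapsto b_0(w\l)$ is holomorphic and nonvanishing near $\l_0$, so $f(\l):=b_0(w\l)p_1(\l)$ is holomorphic near $\l_0$. Recalling from \eqref{eq:setI} that $\pi=\pi_I$ with $I=\{(\a,n_\a\a):\a\in\Sigma_{\l_0}\}$ and that $\inner{\l_0-n_\a\a}{\a}=\inner{\a}{\a}\big((\l_0)_\a-n_\a\big)=0$, the hypothesis of Lemma~\ref{lemma:polydiff}(b) is satisfied (it is a purely local statement at $\l_0$, hence applies to $f$ merely holomorphic near $\l_0$), and it yields
\[
\partial(\pi)\Big(b_0(w\l)p_1(\l)e^{(w\l-\rho)(x)}\Big)\Big|_{\l=\l_0}
= e^{(\l_0-\rho)(x)}\sum_{J\sqcup L=I}\Big(\partial(\pi_J)\big(b_0(w\l)p_1(\l)\big)\Big)\Big|_{\l=\l_0}\;\pi_{wL}(x),
\]
where $\pi_{wL}(x)=\prod_{(\nu_1,\nu_2)\in L}(w\nu_1)(x)=\pi_L(w^{-1}x)$ is a polynomial of degree $|L|=|I|-|J|$ in $x$.

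The core of the argument is to evaluate each coefficient $\partial(\pi_J)\big(b_0(w\l)p_1(\l)\big)\big|_{\l=\l_0}$ according to $|J|$. The key observation is that $p_1(\l)=p_{w,+}(\l)p_{w,-}(\l)=\prod_{\a\in\Sigma_{\l_0}^>}\big(\inner{\l}{\a}-n_\a\inner{\a}{\a}\big)$, because the index sets $\Sigma_{\l_0}^>\cap w(\Sigma_0^+)$ and $\Sigma_{\l_0}^>\cap w(-\Sigma_0^+)$ partition $\Sigma_{\l_0}^>$ (here one uses that $w$ permutes $\Sigma_0=\Sigma_0^+\sqcup(-\Sigma_0^+)$ and $\Sigma_{\l_0}^>\subseteq\Sigma_0^+$); writing $\l=\l_0+h$, each factor equals $\inner{h}{\a}$, so $p_1(\l_0+h)=\prod_{\a\in\Sigma_{\l_0}^>}\inner{h}{\a}$ is a nonzero homogeneous polynomial in $h$ of degree $|\Sigma_{\l_0}^>|$. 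Thus $p_1$, and hence also $b_0(w\l)p_1(\l)$ (since $\l\mapsto b_0(w\l)$ is holomorphic and nonzero at $\l_0$), vanishes at $\l_0$ to order exactly $|\Sigma_{\l_0}^>|$. Consequently: for $|J|<|\Sigma_{\l_0}^>|$ the coefficient is $0$; for $|J|=|\Sigma_{\l_0}^>|$, the generalized Leibniz rule $\partial(\pi_J)(gh)=\sum_{J_1\sqcup J_2=J}\partial(\pi_{J_1})(g)\,\partial(\pi_{J_2})(h)$, applied with $g(\l)=b_0(w\l)$ and $h=p_1$, kills every summand with $J_2\subsetneq J$ (there $|J_2|<|\Sigma_{\l_0}^>|$, so $\partial(\pi_{J_2})(p_1)\big|_{\l_0}=0$), leaving only $J_1=\emptyset$, $J_2=J$, i.e. $b_0(w\l_0)\,\partial(\pi_J)(p_1)\big|_{\l_0}=b_0(\l_0)\,\partial(\pi_J)(p_1)\big|_{\l_0}$ by Lemma~\ref{lemma:b0}(c); and for $|J|>|\Sigma_{\l_0}^>|$ the coefficient is just a finite constant.

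To finish, I would collect the three cases in the displayed sum. The terms with $|J|=|\Sigma_{\l_0}^>|$ are exactly those with $|L|=|\Sigma_{\l_0}^0|$ and contribute $b_0(\l_0)\sum_{J\sqcup L=I,\;|J|=|\Sigma_{\l_0}^>|}\partial(\pi_J)(p_1)\big|_{\l_0}\,\pi_{wL}(x)$; the terms with $|J|>|\Sigma_{\l_0}^>|$ have $|L|<|\Sigma_{\l_0}^0|$, so their total is a polynomial $f_{w,\l_0}(x)$ in $x$ of degree $<|\Sigma_{\l_0}^0|=\deg\pi_0$; and the terms with $|J|<|\Sigma_{\l_0}^>|$ vanish. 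Multiplying by $e^{(\l_0-\rho)(x)}$ gives precisely the claimed identity. I do not anticipate a genuine obstacle; the only delicate point is the vanishing-order bookkeeping --- specifically, that $p_1$ vanishes at $\l_0$ to order exactly $|\Sigma_{\l_0}^>|$, which is what isolates the clean leading coefficient $b_0(\l_0)\,\partial(\pi_J)(p_1)\big|_{\l_0}$ with no contribution from derivatives of $\l\mapsto b_0(w\l)$.
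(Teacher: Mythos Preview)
Your proof is correct and follows essentially the same approach as the paper: both rely on Lemma~\ref{lemma:polydiff}(b) together with the fact that $p_1$ vanishes at $\l_0$ to order exactly $|\Sigma_{\l_0}^>|$, so that only the partitions with $|J|\ge|\Sigma_{\l_0}^>|$ survive and the leading coefficient is $b_0(\l_0)\,\partial(\pi_J)(p_1)\big|_{\l=\l_0}$. The only difference is cosmetic ordering---the paper first applies Leibniz to split off $p_1$ from $b_0(w\l)e^{(w\l-\rho)(x)}$ and then invokes Lemma~\ref{lemma:polydiff}(b) on the latter factor, whereas you apply Lemma~\ref{lemma:polydiff}(b) at once to $f(\l)=b_0(w\l)p_1(\l)$ and then use Leibniz on the resulting coefficients $\partial(\pi_J)\big(b_0(w\l)p_1(\l)\big)$.
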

\begin{proof}
By Lemma \ref{lemma:polydiff}, we have
$\partial(\pi_J)(p_1)\big|_{\l=\l_0}=0$ for all $J \subset I$ with
$|J|< \deg p_1=|\Sigma_{\l_0}^>|$. Hence
\begin{multline*}
 \partial(\pi) \Big( b_0(w\l) p_1(\l) e^{(w\l-\rho)(x)}\Big)\Big|_{\l=\l_0}\\
= \sum_{\substack{J\sqcup L=I\\ |J|\geq |\Sigma_{\l_0}^>|}}
\partial(\pi_J)(p_1)\big|_{\l=\l_0}
 \partial(\pi_L)\Big(b_0(w\l)e^{(w\l-\rho)(x)}\Big)\Big|_{\l=\l_0}\,.
\end{multline*}
Notice that $\pi_L(w^{-1}x)=\pi_{wL}(x)$. Applying again Lemma
\ref{lemma:polydiff} (b), we get
\begin{align*}
&\partial(\pi_L)\Big(b_0(w\l)e^{(w\l-\rho)(x)}\Big)\Big|_{\l=\l_0}=\\
&\qquad =\sum_{R\sqcup S=L} \partial(\pi_R)b_0(w\l)\big|_{\l=\l_0} \;
\pi_S(w^{-1}x) e^{(\l_0-\rho)(x)}\\
&\qquad =\Big[ b_0(\l_0)\pi_{wL}(x)+\text{(poly in $x$, depending on $\l_0$ and $w$,
of degree $<|L|$)}\Big]
e^{(\l_0-\rho)(x)}\,.
\end{align*}
The polynomial $\pi_{wL}(x)$ has maximal degree (equal to $|\Sigma_{\l_0}^0|$) if $|J|=|\Sigma_{\l_0}^>|$.
Collecting together these terms proves then the required formula.
\end{proof}

To sum the contributions of the terms corresponding to $\mu=0$ and
$w\in W_{\l_0}$, we still need two more lemmas.

\begin{Lemma} \label{lemma:fL}
For $L \subset I$ with $|L|=|\Sigma_{\l_0}^0|$ define
\begin{equation}
 \label{eq:fL}
f_L(x)=\sum_{w \in W_{\l_0}} (\det w) \pi_{wL}(x)\,, \qquad x \in \frak a\,.
\end{equation}
Then there is a constant $c_L$ so that $f_L(x)=c_L \pi_0(x)$ for all $x \in \frak a$.
\end{Lemma}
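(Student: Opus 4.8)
The plan is to show that $f_L$ is, up to a multiplicative constant, the polynomial $\pi_0(x)=\prod_{\a\in\Sigma_{\l_0}^0}\a(x)$ by exploiting the $W_{\l_0}$-equivariance of $f_L$ together with a degree and divisibility argument. First I would observe that $f_L$ is a homogeneous polynomial in $x$ of degree $|L|=|\Sigma_{\l_0}^0|=\deg\pi_0$, since each $\pi_{wL}(x)=\prod_{(\nu_1,\nu_2)\in L}\nu_1(w^{-1}x)$ is homogeneous of degree $|L|$. Next, recalling that $W_{\l_0}$ is the Weyl group of the root subsystem $\Sigma_{\l_0}^0\sqcup(-\Sigma_{\l_0}^0)$ (Lemma \ref{lemma:elementary-prop-lambda0}), I would check that $f_L$ is $W_{\l_0}$-anti-invariant: for $v\in W_{\l_0}$,
\[
f_L(v^{-1}x)=\sum_{w\in W_{\l_0}}(\det w)\,\pi_{wL}(v^{-1}x)=\sum_{w\in W_{\l_0}}(\det w)\,\pi_{(vw)L}(x)=(\det v)\,f_L(x),
\]
after the substitution $w\mapsto v^{-1}w$ and using $\det(v^{-1}w)=(\det v)(\det w)$.

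The key step is then the standard fact that any $W_{\l_0}$-anti-invariant polynomial is divisible by the product of the positive roots of the corresponding root system, namely by $\prod_{\a\in\Sigma_{\l_0}^0}\a(x)=\pi_0(x)$. Indeed, for each $\a\in\Sigma_{\l_0}^0$ the reflection $r_\a$ lies in $W_{\l_0}$ and anti-invariance forces $f_L$ to vanish on the hyperplane $\a(x)=0$; since the linear forms $\a$ for $\a\in\Sigma_{\l_0}^0$ are pairwise non-proportional (the roots being indivisible), their product $\pi_0$ divides $f_L$ in the polynomial ring $\polya$. (This is precisely the argument behind the fact that $\pi$ in \eqref{eq:pi} generates the anti-invariants over the invariants, as used in the complex case \eqref{eq:Fcompl}.) Since $\deg f_L=\deg\pi_0$, the quotient $f_L/\pi_0$ is a constant $c_L$, giving $f_L(x)=c_L\pi_0(x)$ for all $x\in\frak a$, and this identity extends from $x\in\frak a$ to all of $\frak a$ (where it is already stated) by density or simply because both sides are polynomials.

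The main obstacle, such as it is, is bookkeeping rather than conceptual: one must be careful that the linear forms $\nu_1$ appearing in $\pi_{wL}$ are exactly roots in $\Sigma_{\l_0}$ (by the definition \eqref{eq:setI} of $I$, each pair in $I$ is of the form $(\a,n_\a\a)$ with $\a\in\Sigma_{\l_0}$), so that the $W_{\l_0}$-action permutes these forms up to sign and the substitution $w\mapsto vw$ in the sum is legitimate. One should also note that the constant $c_L$ may well be zero for some $L$ — the lemma only asserts proportionality to $\pi_0$, not that $c_L\neq 0$ — so no non-vanishing claim needs to be established here; that refinement, if needed, would be handled separately in the subsequent summation of the $\mu=0$ terms.
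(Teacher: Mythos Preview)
Your proof is correct and follows essentially the same approach as the paper: observe that $f_L$ has degree $|\Sigma_{\l_0}^0|=\deg\pi_0$, verify it is $W_{\l_0}$-skew-symmetric, and invoke the standard divisibility of skew-symmetric polynomials by the product of positive roots (the paper cites \cite[Lemma 10]{HC-diff} for this, while you sketch the hyperplane-vanishing argument directly). One small remark: your bookkeeping aside about the $\nu_1$ being roots in $\Sigma_{\l_0}$ is unnecessary for the anti-invariance step --- the substitution $w\mapsto vw$ in the sum over $W_{\l_0}$ is legitimate for any functions $\pi_{wL}$, since it only uses that $v\in W_{\l_0}$.
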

\begin{proof}
The polynomial $f_L$ is of the same degree as $\pi_0$ and it is a $W_{\l_0}$-skew-symmetric,
so divisible by $\pi_0$. See e.g. \cite[Lemma 10]{HC-diff}
for a proof of the latter fact.
\end{proof}

\begin{Lemma} \label{lemma:constant-for-Wlambda0}
Let $c_L$ denote the constants introduced in Lemma \ref{lemma:fL} and keep the notation from
Lemma \ref{lemma:termWl0-0}.
Set
\begin{equation}
 \label{eq:rho0}
\rho_0=\sum_{\a\in \Sigma_{\l_0}^0} \a\,.
\end{equation}
Then
\begin{equation} \label{eq:constant-for-Wlambda0}
 \sum_{\substack{J\sqcup L=I\\ |J|=|\Sigma_{\l_0}^>|}} c_L \partial(\pi_J)(p_1)\big|_{\l=\l_0}
=\sum_{\substack{J\sqcup L=I\\ |J|=|\Sigma_{\l_0}^>|}} c_L
\partial(\pi_J)(\pi_1)\big|_{\l=\l_0} =\frac{1}{\pi_0(\rho_0)}\;
\partial(\pi)(\pi)\,.
\end{equation}
\end{Lemma}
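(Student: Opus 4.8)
The plan is to read the sum on the left through the bilinear pairing $(p,q)\mapsto\partial(p)(q)$, which on two homogeneous polynomials of the same degree is a symmetric, $W$-invariant, positive-definite form (the apolar, or Fischer, pairing), and to identify the resulting double sum with $\partial(\pi)(\pi)$. The first equality is immediate: the polynomials $p_1$ and $\pi_1$ have the same top-degree term, so $\deg(p_1-\pi_1)<|\Sigma_{\l_0}^>|$ and $\partial(\pi_J)(p_1-\pi_1)=0$ by Lemma \ref{lemma:polydiff}(a) whenever $|J|=|\Sigma_{\l_0}^>|$; moreover $\partial(\pi_J)(\pi_1)$ is then a constant, independent of $\l$. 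So it remains to prove the second equality.

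Recall from the proof of Lemma \ref{lemma:termWl0-0} that $\pi_{wL}=w\cdot\pi_L$ for the $W$-action on polynomials, and that $w\cdot\pi_0=(\det w)\pi_0$ for $w\in W_{\l_0}$ by Lemma \ref{lemma:b0}(a). Applying $\partial(\pi_0)$ to both sides of the identity $f_L=c_L\pi_0$ of Lemma \ref{lemma:fL} (all terms are homogeneous of degree $|\Sigma_{\l_0}^0|$), and using the $W$-invariance of the pairing, gives
\[
c_L\,\partial(\pi_0)(\pi_0)=\sum_{w\in W_{\l_0}}(\det w)\,\partial(\pi_0)(w\cdot\pi_L)
=\sum_{w\in W_{\l_0}}(\det w)^2\,\partial(\pi_0)(\pi_L)=|W_{\l_0}|\,\partial(\pi_0)(\pi_L).
\]
Substituting this into the left-hand side, and using $\partial(\pi_J)(\pi_1)=\partial(\pi_1)(\pi_J)$ by symmetry, we are reduced to showing $\sum_{J\sqcup L=I}\partial(\pi_0)(\pi_L)\,\partial(\pi_1)(\pi_J)=\partial(\pi)(\pi)$, the sum being over $J\sqcup L=I$ with $|J|=|\Sigma_{\l_0}^>|$. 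Since $p\mapsto\partial(p)$ is an algebra homomorphism and every factor of $\pi=\pi_I=\prod_{(\nu_1,\nu_2)\in I}\inner{\cdot}{\nu_1}$ (see (\ref{eq:setI})) is linear, the Leibniz rule gives $\partial(\pi_0)(\pi_I)=\sum_{L\subseteq I,\ |L|=|\Sigma_{\l_0}^0|}\partial(\pi_0)(\pi_L)\,\pi_{I\setminus L}$; applying $\partial(\pi_1)$, with $\partial(\pi_1)\partial(\pi_0)=\partial(\pi)$ and $\pi_I=\pi$, then yields exactly $\sum_{J\sqcup L=I}\partial(\pi_0)(\pi_L)\,\partial(\pi_1)(\pi_J)=\partial(\pi)(\pi)$. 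Hence the left-hand side equals $\dfrac{|W_{\l_0}|}{\partial(\pi_0)(\pi_0)}\,\partial(\pi)(\pi)$.

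It remains to show $\dfrac{|W_{\l_0}|}{\partial(\pi_0)(\pi_0)}=\dfrac1{\pi_0(\rho_0)}$, i.e. $\partial(\pi_0)(\pi_0)=|W_{\l_0}|\,\pi_0(\rho_0)$. Since $\Sigma_{\l_0}^0\sqcup(-\Sigma_{\l_0}^0)$ is a root system whose positive system is $\Sigma_{\l_0}^0$ and whose Weyl group is $W_{\l_0}$, this is a Weyl-denominator-type identity for $W_{\l_0}$: comparing lowest-degree homogeneous components on the two sides of the Weyl denominator formula exhibits $\pi_0$ as a fixed multiple of the $W_{\l_0}$-skew-symmetrization of a power of a linear form attached to the half-sum of $\Sigma_{\l_0}^0$, and applying $\partial(\pi_0)$ to that identity then evaluates the scalar $\partial(\pi_0)(\pi_0)$. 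This last step is the delicate one, since it is where the normalizations (the precise meaning of $\rho_0$ and of the scalar $\partial(\pi_0)(\pi_0)$) must be matched so that the constant comes out exactly as claimed; the earlier steps are purely formal manipulations with the apolar pairing and Leibniz's rule.
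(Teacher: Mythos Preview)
Your route via the apolar (Fischer) pairing is genuinely different from the paper's and, through the Leibniz reassembly of $\partial(\pi)(\pi)$, is correct and rather clean. The paper instead computes the auxiliary expression
$\partial(\pi)\big[\pi_1(\l)\sum_{w\in W_{\l_0}}(\det w)e^{w\l(x)}\big]\big|_{\l=0}$, evaluates it at the special point $x=x_{\rho_0}$, and uses the Weyl denominator formula to convert the alternating sum into a product of hyperbolic sines before differentiating. Your argument replaces that transcendental step by two algebraic facts: $W_{\l_0}$-invariance of the pairing to extract $c_L$ as $|W_{\l_0}|\,\partial(\pi_0)(\pi_L)/\partial(\pi_0)(\pi_0)$, and the product rule on linear factors to rebuild $\partial(\pi)(\pi)$. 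Both of these are carried out correctly, and they yield
\[
\sum_{\substack{J\sqcup L=I\\|J|=|\Sigma_{\l_0}^>|}} c_L\,\partial(\pi_J)(\pi_1)
=\frac{|W_{\l_0}|}{\partial(\pi_0)(\pi_0)}\,\partial(\pi)(\pi).
\]

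The gap is in your final step, which you yourself flag as ``delicate'': the identity $\partial(\pi_0)(\pi_0)=|W_{\l_0}|\,\pi_0(\rho_0)$ is asserted but not proven, and in fact it is \emph{false} with the paper's normalization $\rho_0=\sum_{\a\in\Sigma_{\l_0}^0}\a$. The classical formula (obtained, as you suggest, by comparing degree-$|\Sigma_{\l_0}^0|$ parts of the Weyl denominator formula for $W_{\l_0}$) is
\[
\partial(\pi_0)(\pi_0)=|W_{\l_0}|\,\pi_0(\delta),\qquad \delta=\tfrac12\,\rho_0,
\]
where $\delta$ is the \emph{half}-sum of the roots in $\Sigma_{\l_0}^0$. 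Since $\pi_0$ is homogeneous of degree $d:=|\Sigma_{\l_0}^0|$, this differs from your claim by a factor $2^d$. A rank-one check makes this concrete: with $\Sigma_{\l_0}^0=\{\a\}$ one has $\partial(\pi_0)(\pi_0)=\inner{\a}{\a}$, while $|W_{\l_0}|\,\pi_0(\rho_0)=2\inner{\a}{\a}$. Thus your argument actually produces the constant $2^{d}/\pi_0(\rho_0)$ rather than $1/\pi_0(\rho_0)$. The same power of $2$ seems to be dropped in the paper's own application of the Weyl denominator formula (the identity $\sum_{w}(\det w)e^{w\rho_0(x_\l)}=\prod_{\a}\sinh\inner{\a}{\l}$ should read $\prod_{\a}2\sinh\inner{\a}{\l}$), so the stated constant in the lemma is likely off by this harmless factor; none of the subsequent results are affected, since only the non-vanishing of the leading coefficient is used.
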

\begin{proof}
The first equality in (\ref{eq:constant-for-Wlambda0}) follows
from Lemma \ref{lemma:polydiff}, (a), part 2). To prove the second
equality, notice first that, by Lemmas \ref{lemma:polydiff} and
\ref{lemma:fL}, we have
\begin{align*}
\partial(\pi)\Big[ \pi_1(\l) \big( \sum_{w \in W_{\l_0}} (\det w) e^{w\l(x)}\big) \Big]\Big|_{\l=0}&=
  \sum_{w \in W_{\l_0}} (\det w)  \partial(\pi)\big(\pi_1(\l)e^{w\l(x)}\big)\big|_{\l=0}\\
 &=\sum_{w \in W_{\l_0}} (\det w) \sum_{\substack{J\sqcup L=I\\ |J|=|\Sigma_{\l_0}^>|}} \partial(\pi_J)(\pi_1) \pi_{wL}(x)\\
 &=\sum_{\substack{J\sqcup L=I\\ |J|=|\Sigma_{\l_0}^>|}} \partial(\pi_J)(\pi_1) f_L(x)\\
 &=\pi_0(x) \; \sum_{\substack{J\sqcup L=I\\ |J|=|\Sigma_{\l_0}^>|}} c_L \, \partial(\pi_J)(\pi_1) \,.
\end{align*}
Choose $x=x_{\rho_0}$. Then
\begin{equation*}
 \sum_{w \in W_{\l_0}} (\det w) e^{w\l(x_{\rho_0})}= \sum_{w \in W_{\l_0}} (\det w) e^{w\rho_0(x_\l)}
=\prod_{\a\in \Sigma_{\l_0}^0} \sinh \inner{\a}{\l}\,.
\end{equation*}
The last equality is e.g. Proposition 5.15 (i) in \cite[Ch.
II, \S 5]{He2}. Since $\pi_0(x_{\rho_0})=\pi_0(\rho_0)> 0$, we can write
\begin{equation*}
 \sum_{\substack{J\sqcup L=I\\ |J|=|\Sigma_{\l_0}^>|}} c_L \partial(\pi_J)(\pi_1)= \frac{1}{\pi_0(\rho_0)} \;
\partial(\pi)\Big[ \pi_1(\l) \prod_{\a\in \Sigma_{\l_0}^0} \sinh \inner{\a}{\l} \Big]\Big|_{\l=0}\,.
\end{equation*}
We claim that
\begin{equation}
 \partial(\pi)\Big[ \pi_1(\l) \prod_{\a\in \Sigma_{\l_0}^0} \sinh \inner{\a}{\l} \Big]\Big|_{\l=0}=
\partial(\pi)(\pi)\,.
\end{equation}
Indeed
\begin{equation*}
 \partial(\pi)\Big[ \pi_1(\l) \prod_{\a\in \Sigma_{\l_0}^0} \sinh \inner{\a}{\l} \Big]\Big|_{\l=0}
=\sum_{\substack{R\sqcup S=I\\ |R|=|\Sigma_{\l_0}^>|}} \partial(\pi_R)(\pi_1)
\Big[ \partial(\pi_S) \Big(\prod_{\a\in \Sigma_{\l_0}^0} \sinh \inner{\a}{\l}\Big) \Big]\Big|_{\l=0}\,.
\end{equation*}
Because of the evaluation at $\l=0$, the only nonzero terms inside the last square parenthesis are those of the form
\begin{equation*}
 \prod_{j=1}^d \partial(\inner{\l}{\b_j}) (\sinh \inner{\gamma_j}{\l})\big|_{\l=0}
\end{equation*}
where $d=|\Sigma_{\l_0}^0|$, and $\{\b_1,\dots,\b_d\}$ and $\{\gamma_1,\dots,\gamma_d\}$ are respectively enumerations of $S$ and
$\Sigma_{\l_0}^0$.
The conclusion follows as
\begin{equation*}
 \partial(\inner{\l}{\b}) (\sinh \inner{\gamma}{\l})\big|_{\l=0}=\inner{\gamma}{\b} \cosh \inner{\gamma}{\l})\big|_{\l=0}=\inner{\gamma}{\b}\,,
\end{equation*}
so $\partial(\pi_S) \Big(\prod_{\a\in \Sigma_{\l_0}^0} \sinh \inner{\a}{\l}\Big) \Big|_{\l=0}=\partial(\pi_S)(\pi_0)\,.$
\end{proof}

\begin{Cor} \label{cor:sumoverWlambda0}
Keep the above notation. Then
\begin{equation}
 \sum_{w\in W_{\l_0}} \partial(\pi) \Big( p(\l)c(w\l) e^{(w\l-\rho)(x)}\Big)\Big|_{\l=\l_0}
=\Big( \frac{c_0}{\pi_0(\rho_0)} b_0(\l_0) \pi_0(x) + f_{\l_0}(x)\Big) e^{(\l_0-\rho)(x)}\,
\end{equation}
where $c_0=\partial(\pi)(\pi)$ is the positive constant of
Proposition \ref{prop:asymptoticsF-beginning} (c),
$b_0(\l_0)\neq 0$ and $f_{\l_0}(x)$ is a polynomial function of
$x$ of degree $<\deg \pi_0=|\Sigma_{\l_0}^0|$.
\end{Cor}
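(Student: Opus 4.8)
The plan is to chain together the lemmas established above; the corollary is essentially their bookkeeping consequence. First I would use the factorization in (\ref{eq:c-and-b}): for $w \in W_{\l_0}$ one has $p(\l)c(w\l) = \pi_0(\l)p_1(\l)c(w\l) = (\det w)\,b_0(w\l)\,p_1(\l)$, since $\pi_0(w\l) = (\det w)\pi_0(\l)$ by Lemma \ref{lemma:b0}(a), $(\det w)^2 = 1$, and $p_1 = p_{w,+}p_{w,-}$ is independent of $w$ (as observed after (\ref{eq:p})). Plugging this into the left-hand side of the assertion turns it into $\sum_{w \in W_{\l_0}} (\det w)\,\partial(\pi)\big(b_0(w\l)\,p_1(\l)\,e^{(w\l-\rho)(x)}\big)\big|_{\l=\l_0}$.

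Next I would apply Lemma \ref{lemma:termWl0-0} termwise, which replaces $\partial(\pi)\big(b_0(w\l)p_1(\l)e^{(w\l-\rho)(x)}\big)\big|_{\l=\l_0}$ by $\big\{b_0(\l_0)\sum_{J\sqcup L=I,\,|J|=|\Sigma_{\l_0}^>|}\partial(\pi_J)(p_1)\big|_{\l=\l_0}\,\pi_{wL}(x) + f_{w,\l_0}(x)\big\}e^{(\l_0-\rho)(x)}$ with $\deg f_{w,\l_0} < |\Sigma_{\l_0}^0|$. Summing over $w \in W_{\l_0}$ against the signs $\det w$, the sum $\sum_{w}(\det w)f_{w,\l_0}(x)$ is a polynomial $f_{\l_0}(x)$ of degree $< |\Sigma_{\l_0}^0| = \deg\pi_0$, which is the error term in the statement. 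For the remaining part I interchange summations to get $b_0(\l_0)\sum_{J\sqcup L=I,\,|J|=|\Sigma_{\l_0}^>|}\partial(\pi_J)(p_1)\big|_{\l=\l_0}\big(\sum_{w\in W_{\l_0}}(\det w)\pi_{wL}(x)\big)$; here $|L| = |I|-|J| = |\Sigma_{\l_0}^0|$ (using $\Sigma_{\l_0} = \Sigma_{\l_0}^0\sqcup\Sigma_{\l_0}^>$ and (\ref{eq:setI})), so Lemma \ref{lemma:fL} replaces each inner sum by $f_L(x) = c_L\pi_0(x)$, leaving $b_0(\l_0)\pi_0(x)\sum_{J\sqcup L=I,\,|J|=|\Sigma_{\l_0}^>|}c_L\,\partial(\pi_J)(p_1)\big|_{\l=\l_0}$. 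By Lemma \ref{lemma:constant-for-Wlambda0} this last sum equals $\frac{1}{\pi_0(\rho_0)}\partial(\pi)(\pi) = \frac{c_0}{\pi_0(\rho_0)}$, producing the claimed leading term. Finally $b_0(\l_0)\neq 0$ by Lemma \ref{lemma:b0}(c) and $c_0 = \partial(\pi)(\pi) > 0$ by Proposition \ref{prop:asymptoticsF-beginning}(c), which completes the statement.

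The only place requiring attention — and the closest thing to an obstacle — is the degree bookkeeping inside the Leibniz expansion generated by $\partial(\pi)$: one must verify that every monomial in $x$ of degree $\geq |\Sigma_{\l_0}^0|$ can only arise from the distinguished splittings $I = J\sqcup L$ with $|L| = |\Sigma_{\l_0}^0|$ (equivalently $|J| = |\Sigma_{\l_0}^>|$), because $\partial(\pi_J)(p_1)|_{\l=\l_0} = 0$ whenever $|J| < |\Sigma_{\l_0}^>|$ by Lemma \ref{lemma:polydiff}(a), part 2). This is exactly what licenses absorbing all lower-degree contributions into $f_{\l_0}$, and once it is in hand the rest is direct substitution.
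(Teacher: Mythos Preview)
Your proposal is correct and follows exactly the approach of the paper, which states the corollary as an immediate consequence of (\ref{eq:c-and-b}) together with Lemmas \ref{lemma:b0}, \ref{lemma:termWl0-0}, \ref{lemma:fL} and \ref{lemma:constant-for-Wlambda0}. You have simply spelled out the chain of substitutions in detail, including the degree bookkeeping that is implicit in the proof of Lemma \ref{lemma:termWl0-0}.
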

\begin{proof}
This is an immediate consequence of (\ref{eq:c-and-b}) and of Lemmas \ref{lemma:b0}, \ref{lemma:termWl0-0}, \ref{lemma:fL} and
\ref{lemma:constant-for-Wlambda0}.
\end{proof}

Similar (but less explicit) computations can be performed to evaluate each term of the series (\ref{eq:F-der-series}).
Notice that, by Lemma \ref{lemma:singcGammamu}, (b) and (c), the function defined by
\begin{equation}
\label{eq:bw}
b_w(\l)=\begin{cases}
\pi_0(\l)c(w\l) &\text{if $w \in W_{\Re\l_0}$}\\
\pi_0(\l)p_{w,-}(\l)c(w\l) &\text{if $w \in W\setminus W_{\Re\l_0}$}
\end{cases}
\end{equation}
is always holomorphic on a neighborhood of $\overline{U} \ni
\l_0$. It is nonzero at $\l=\l_0$ for $w \in W_{\Re\l_0}$.
However, Lemma \ref{lemma:b0}(a)  holds only when $w \in
W_{\l_0}$. As in the case of $W_{\l_0}$, each term of
(\ref{eq:F-der-series}) corresponding to a fixed $w \in W$ and
$\mu=0$ includes a polynomial factor in $x$. It is of degree $\leq
|\Sigma_{\l_0}^0|=\deg \pi_0$ if $w \in W_{\Re\l_0}$, and of
degree $\leq |\Sigma_{\l_0}^0|+|\Sigma_{\l_0}^> \cap
w(-\Sigma_0^+)|=\deg (\pi_0 p_{w,-})$ if $w \in W \setminus
W_{\Re\l_0}$. Estimates for each of the terms of
(\ref{eq:F-der-series}) can be obtained from Lemma
\ref{lemma:conv-diffHCseries}. The result of this computation is
presented in the following theorem.

\begin{Thm} \label{thm:HCseriesF}
Keep the assumptions of Proposition \ref{prop:asymptoticsF-beginning}, and let $x_0 \in \frak a^+$ be fixed.
Then for $x \in x_0 +\overline{\frak a^+}$ we have
\begin{multline}\label{eq:HCseriesF}
c_0 \varphi_{\l_0}(x)=
\Big( \frac{c_0}{\pi_0(\rho_0)} b_0(\l_0) \pi_0(x) + f_{\l_0}(x)\Big) e^{(\l_0-\rho)(x)}  \\
+ \sum_{w \in (W_{\Re\l_0} \setminus W_{\l_0}) \sqcup (W\setminus W_{\Re\l_0})}
\Big( b_w(\l_0) \pi_{w,\l_0}(x) + f_{w,\l_0}(x)\Big) e^{(w\l_0-\rho)(x)}    \\
+\sum_{\mu \in 2\Lambda \setminus \{0\}} \sum_{w \in W}
f_{w,\mu,\l_0}(x) e^{(w\l_0-\rho-\mu)(x)} \,.
\end{multline}
The first term in (\ref{eq:HCseriesF}) is as in Corollary
\ref{cor:sumoverWlambda0}. For $w \in W \setminus W_{\l_0}$, the
constant $b_w(\l_0)$ is given by evaluation of (\ref{eq:bw}) at
$\l=\l_0$. It is nonzero for $w \in W_{\Re\l_0} \setminus
W_{\l_0}$. The polynomial $\pi_{w,\l_0}(x)$ is explicitly given by
\begin{equation} \label{eq:piwlambda0}
\pi_{w,\l_0}(x)
=\begin{cases}
\displaystyle{\sum_{\substack{J\sqcup L=I\\ |J|=|\Sigma_{\l_0}^>|}}} \partial(\pi_J)(\pi_1)\; \pi_{wL}(x)
&\text{if $w \in W_{\Re\l_0} \setminus W_{\l_0}$}\\
\displaystyle{\sum_{\substack{J\sqcup L=I\\ |J|=|\Sigma_{\l_0}^> \cap w(\Sigma_0^+)|}}} \partial(\pi_J)(\pi_{w,+}) \;\pi_{wL}(x) \qquad
&\text{if $w \in W \setminus W_{\Re\l_0}$}
\end{cases}
\end{equation}
where
$\pi_1(\l)$ is as in (\ref{eq:pi1}) and
\begin{equation}
\label{eq:piwplus}
\pi_{w,+}(\l)=\prod_{\a \in \Sigma_{\l_0}^> \cap w(\Sigma_0^+)} \inner{\l}{\a}\,.
\end{equation}
Moreover, $f_{w,\l_0}(x)$ is a polynomial function of $x$ with $\deg f_{w,\l_0} <\deg \pi_{w,\l_0}$.

For $\mu \in 2\Lambda\setminus \{0\}$ and $w \in W$, $f_{w,\mu,\l_0}(x)$ is a polynomial function of
$x$ of degree $\leq |\Sigma_{\l_0}|=\deg p$.
The series on the right-hand side of (\ref{eq:HCseriesF}) converges uniformly for
$x \in x_0 + \overline{\frak a^+}$.
\end{Thm}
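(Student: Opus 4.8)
The plan is to expand the right-hand side of the identity \eqref{eq:F-der-series} in Proposition \ref{prop:asymptoticsF-beginning}(d), grouping the terms according to the exponential $e^{(w\l_0-\rho-\mu)(x)}$ that appears in each. The starting point is the factorization
\[
p(\l)c(w\l)\Gamma_\mu(w\l)=\big(\pi_0(\l)p_{w,-}(\l)c(w\l)\big)\cdot\big(p_{w,+}(\l)\Gamma_\mu(w\l)\big)\cdot\frac{p_{w,+}(\l)p_{w,-}(\l)}{p_{w,+}(\l)}\cdot\frac{1}{\text{(whatever makes this consistent)}}\,,
\]
which I will write more carefully as $p(\l)c(w\l)\Gamma_\mu(w\l)=b_w(\l)\cdot q_w(\l)\cdot\big(p_{w,+}(\l)\Gamma_\mu(w\l)\big)$ for a suitable polynomial $q_w$ (equal to $p_1$ when $w\in W_{\Re\l_0}$, and to $p_{w,+}$ when $w\notin W_{\Re\l_0}$, by \eqref{eq:p} and \eqref{eq:bw}), where $b_w(\l)$ and $p_{w,+}(\l)\Gamma_\mu(w\l)$ are holomorphic near $\overline U$ by Proposition \ref{prop:asymptoticsF-beginning}(b) and Lemma \ref{lemma:singcGammamu}. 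Then I apply $\partial(\pi)$ and expand it via the Leibniz-type formula of Lemma \ref{lemma:polydiff}(b), splitting $I=J\sqcup L$ and letting $\partial(\pi_L)$ act on the exponential $e^{(w\l-\rho-\mu)(x)}$ to produce the polynomial factor $\pi_{wL}(x)$, while $\partial(\pi_J)$ acts on the holomorphic coefficient.

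First I would treat the case $\mu=0$. For $w\in W_{\l_0}$ this is exactly Corollary \ref{cor:sumoverWlambda0}, which gives the first line of \eqref{eq:HCseriesF}; the key inputs there are Lemma \ref{lemma:b0}(a) (the skew-invariance $\pi_0(w\l)=(\det w)\pi_0(\l)$), Lemma \ref{lemma:termWl0-0}, and the constant identity of Lemma \ref{lemma:constant-for-Wlambda0}. For $w\in W_{\Re\l_0}\setminus W_{\l_0}$ and for $w\in W\setminus W_{\Re\l_0}$ one repeats the computation of Lemma \ref{lemma:termWl0-0} with $b_0(w\l)$ replaced by $b_w(\l)$ and $p_1$ replaced by the appropriate $q_w$: since by Lemma \ref{lemma:polydiff}(a)(2) the factor $\partial(\pi_J)(q_w)|_{\l=\l_0}$ vanishes unless $|J|\ge\deg q_w$, only the top-degree contributions survive, yielding the leading polynomial $\pi_{w,\l_0}(x)$ of \eqref{eq:piwlambda0} (with top term $\partial(\pi_J)(\pi_1)$ resp.\ $\partial(\pi_J)(\pi_{w,+})$, after replacing $q_w$ by its highest-order part $\pi_1$ resp.\ $\pi_{w,+}$) times $b_w(\l_0)$, plus lower-degree polynomial corrections collected into $f_{w,\l_0}(x)$ with $\deg f_{w,\l_0}<\deg\pi_{w,\l_0}$; nonvanishing of $b_w(\l_0)$ for $w\in W_{\Re\l_0}\setminus W_{\l_0}$ is Lemma \ref{lemma:singcGammamu}(c) / the remark after \eqref{eq:bw}. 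The degree bound $\deg\pi_{w,\l_0}=|\Sigma_{\l_0}^0|$ when $w\in W_{\Re\l_0}$, and $=|\Sigma_{\l_0}^0|+|\Sigma_{\l_0}^>\cap w(-\Sigma_0^+)|$ otherwise, comes from counting $|L|$ with $|J|=\deg q_w$.

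Next, for $\mu\in2\Lambda\setminus\{0\}$ and each $w\in W$ one applies the same Leibniz expansion; now there is no skew-invariance to exploit, so one simply records that $\partial(\pi)\big(b_w(\l)q_w(\l)p_{w,+}(\l)\Gamma_\mu(w\l)e^{(w\l-\rho-\mu)(x)}\big)|_{\l=\l_0}$ is $e^{(w\l_0-\rho-\mu)(x)}$ times a polynomial $f_{w,\mu,\l_0}(x)$ in $x$ whose degree is at most $|I|=|\Sigma_{\l_0}|=\deg p$ (the degree of $\pi$), since $\pi_{wL}(x)$ has degree $|L|\le|I|$. Finally, the convergence of the triple series, uniformly for $x\in x_0+\overline{\fa^+}$, is inherited directly from Proposition \ref{prop:asymptoticsF-beginning}(d): that proposition already asserts uniform convergence of $\sum_{\mu}\sum_w\partial(\pi)(p(\l)c(w\l)\Gamma_\mu(w\l)e^{(w\l-\rho-\mu)(x)})|_{\l=\l_0}$, and our regrouping only relabels and splits finitely many terms at each level $\mu$, using Lemma \ref{lemma:conv-diffHCseries} to control the term-by-term differentiation. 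The main obstacle is bookkeeping rather than analysis: one must keep careful track of which polynomial ($p_1$, $p_{w,+}$, or $\pi_1$, $\pi_{w,+}$) plays the role of $q_w$ in each of the three $w$-regimes, verify that replacing $p_{w,+}$ and $p_1$ by their highest-order homogeneous parts $\pi_{w,+}$, $\pi_1$ does not change the top-degree coefficient (again Lemma \ref{lemma:polydiff}(a)(1)–(2)), and confirm that all lower-order debris is legitimately absorbed into the $f$-polynomials with the claimed strict degree drop.
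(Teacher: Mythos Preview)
Your approach is essentially the same as the paper's, which in fact gives only a one-paragraph sketch before the theorem: compute each term of \eqref{eq:F-der-series} by writing $p(\l)c(w\l)\Gamma_\mu(w\l)$ as $b_w(\l)$ times a factor holomorphic near $\l_0$, apply the Leibniz rule of Lemma~\ref{lemma:polydiff}(b), invoke Corollary~\ref{cor:sumoverWlambda0} for $w\in W_{\l_0}$, and repeat the computation of Lemma~\ref{lemma:termWl0-0} for the remaining $w$'s, with convergence coming from Lemma~\ref{lemma:conv-diffHCseries}. One bookkeeping slip to fix: your displayed factorization $p(\l)c(w\l)\Gamma_\mu(w\l)=b_w(\l)\cdot q_w(\l)\cdot\big(p_{w,+}(\l)\Gamma_\mu(w\l)\big)$ carries an extra $q_w$; since $p_{w,+}=p_1$ and $p_{w,-}=1$ whenever $w\in W_{\Re\l_0}$ (by \eqref{eq:wSigmalambdacap-uno}--\eqref{eq:wSigmalambdacap-due}), the correct identity is simply $p(\l)c(w\l)\Gamma_\mu(w\l)=b_w(\l)\cdot\big(p_{w,+}(\l)\Gamma_\mu(w\l)\big)$ in all cases, and your polynomial $q_w$ is exactly $p_{w,+}$ (which equals $p_1$ on $W_{\Re\l_0}$), so the $\mu=0$ analysis and all degree counts go through as you describe.
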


\begin{Rem}
The convergence of the series (\ref{eq:HCseriesF}) uses the following estimate, which is a consequence of Lemma
\ref{lemma:conv-diffHCseries}. Let $x_1 \in \frak a^+$ be fixed. Then there is a constant
$M_{x_1}>0$ so that for all $\mu \in 2\Lambda\setminus\{0\}$ and $w \in W$ we have
\begin{equation} \label{eq:est-fwmu}
|f_{w,\mu,\l_0}(x)| \leq  M_{x_1} (1+ |x|)^{|\Sigma_{\l_0}|} e^{\mu(x_1)}\,, \qquad x \in x_1 +\overline{\frak a^+}\,,
\end{equation}
This will also be needed in the following section, to determine
estimates of the series at infinity in $\frak a^+$ away from its
walls.
\end{Rem}

The following corollary gives some relevant special cases of Theorem \ref{thm:HCseriesF}.

\begin{Cor} \label{cor:HCseriesF}

Let $\l_0\in \frak a_\C^*$ with $\Re\l_0\in \overline{(\frak
a^*)^+}$. If not stated otherwise, we keep assumptions and
notation of Theorem \ref{thm:HCseriesF}.
\begin{enumerate}
\thmlist
\item
Suppose that $\l_0$ is generic, i.e. $(\l_0)_\a \notin \Z$ for all $\a \in \Sigma_0^+$.
Then for $x \in x_0 +\overline{\frak a^+}$ we have
\begin{equation}\label{eq:HCseriesF-regular}
\varphi_{\l_0}(x)= \sum_{w\in W} c(w\l_0) e^{(w\l_0-\rho)(x)}+
\sum_{\mu \in 2\Lambda \setminus \{0\}} \sum_{w \in W}
f_{w,\mu,\l_0} e^{(w\l_0-\rho-\mu)(x)} \,
\end{equation}
where $c(w\l_0)\neq 0$ for $w \in W_{\Re\l_0}$ and $f_{w,\mu,\l_0} \in \C$ for $w\in W$ and $\mu\in 2\Lambda\setminus\{0\}$.
\item
Suppose $\inner{\l_0}{\a}\neq 0$ for all $\a \in \Sigma_0^+$.
Then for $x \in x_0 +\overline{\frak a^+}$ we have
\begin{multline}\label{eq:HCseriesF-not-on-walls}
\varphi_{\l_0}(x)= \sum_{\l\in W_{\Re\l_0}} c(w\l_0) e^{(w\l_0-\rho)(x)}+\\
+\frac{1}{c_0} \sum_{w \in W \setminus W_{\Re\l_0}}
\Big( b_w(\l_0) \pi_{w,\l_0}(x) + f_{w,\l_0}(x)\Big) e^{(w\l_0-\rho)(x)}    \\
+\frac{1}{c_0} \sum_{\mu \in 2\Lambda \setminus \{0\}} \sum_{w \in W}
f_{w,\mu,\l_0}(x) e^{(w\l_0-\rho-\mu)(x)}
\end{multline}
with $c(w\l_0)\neq 0$ for all $w \in W_{\Re\l_0}$. \item Suppose
that $\Im{\l_0}$ belongs to the subspace of $\frak a^*$ supporting
the facet containing $\Re\l_0$, i.e. $\inner{\Im\l_0}{\a}=0$ for
all $\a\in \Sigma_0^+$ whenever $\inner{\Re\l_0}{\a}=0$. This
happens for instance if $\Im\l_0=0$. Then for $x \in x_0
+\overline{\frak a^+}$ we have
\begin{multline}\label{eq:HCseriesF-same-centralizer}
c_0 \varphi_{\l_0}(x)=
\Big( \frac{c_0}{\pi_0(\rho_0)} b_0(\l_0) \pi_0(x) + f_{\l_0}(x)\Big) e^{(\l_0-\rho)(x)}  \\
+ \sum_{w \in W \setminus W_{\Re\l_0}}
\Big( b_w(\l_0) \pi_{w,\l_0}(x) + f_{w,\l_0}(x)\Big) e^{(w\l_0-\rho)(x)}    \\
+\sum_{\mu \in 2\Lambda \setminus \{0\}} \sum_{w \in W}
f_{w,\mu,\l_0}(x) e^{(w\l_0-\rho-\mu)(x)} \,.
\end{multline}
\end{enumerate}
\end{Cor}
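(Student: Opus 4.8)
The plan is to obtain all three statements by specializing Theorem \ref{thm:HCseriesF} to the indicated classes of spectral parameters $\l_0$ and then simplifying the polynomial data $\pi_0,\pi_1,\pi,p_{w,+},p_{w,-},p_1,p$, the constant $c_0=\partial(\pi)(\pi)$, and the set $I$ of (\ref{eq:setI}); in each case the decisive first observation is which of the sets $\Sigma_{\l_0}^0$, $\Sigma_{\l_0}^>$, $\Sigma_{\l_0}$ are empty, and the corresponding empty products then trivialize. For (a): the hypothesis $(\l_0)_\a\notin\Z$ for all $\a\in\Sigma_0^+$ (i.e. $\l_0$ generic) forces $\Sigma_{\l_0}=\Sigma_{\l_0}^0=\Sigma_{\l_0}^>=\emptyset$, hence $I=\emptyset$, $\pi_0=\pi_1=\pi=p_1=p_{w,+}=p_{w,-}=p\equiv 1$, $\rho_0=0$, $c_0=1$, and $W_{\l_0}$ is trivial (being the Weyl group of the empty system $\Sigma_{\l_0}^0\sqcup(-\Sigma_{\l_0}^0)$). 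Then in (\ref{eq:HCseriesF}) each $\pi_{w,\l_0}$ equals $1$, the polynomials $f_{\l_0}$ and $f_{w,\l_0}$ vanish because they have degree $<\deg\pi_0=0$, each $f_{w,\mu,\l_0}$ is a constant, and by (\ref{eq:bw}) together with $\pi_0\equiv 1$ one has $b_w(\l_0)=c(w\l_0)$ for all $w\in W$. Merging the first term of (\ref{eq:HCseriesF}) (the $w=1$ contribution, equal to $c(\l_0)e^{(\l_0-\rho)(x)}$) with its middle sum, now running over $W\setminus\{1\}$, produces $\sum_{w\in W}c(w\l_0)e^{(w\l_0-\rho)(x)}$, which is (\ref{eq:HCseriesF-regular}); the non-vanishing of $c(w\l_0)$ for $w\in W_{\Re\l_0}$ is already part of the conclusion of Theorem \ref{thm:HCseriesF}. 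I would also record that for generic $\l_0$ one reads off (\ref{eq:HCseriesF-regular}) directly from (\ref{eq:FfromPhi}) and (\ref{eq:HCseries}), with $f_{w,\mu,\l_0}=c(w\l_0)\Gamma_\mu(w\l_0)$.

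For (b), the hypothesis $\inner{\l_0}{\a}\neq 0$ for all $\a\in\Sigma_0^+$ gives only $\Sigma_{\l_0}^0=\emptyset$, so $\pi_0\equiv 1$, $\rho_0=0$, $W_{\l_0}$ is trivial, $\pi=\pi_1$, $p=p_1$, and $I=\{(\a,n_\a\a):\a\in\Sigma_{\l_0}^>\}$ has $|I|=|\Sigma_{\l_0}^>|=\deg\pi_1$. For each $w\in W_{\Re\l_0}\setminus\{1\}$ the only splitting $J\sqcup L=I$ with $|J|=|\Sigma_{\l_0}^>|$ is $J=I$, $L=\emptyset$, so from (\ref{eq:piwlambda0}) one obtains $\pi_{w,\l_0}(x)=\partial(\pi_I)(\pi_1)=\partial(\pi)(\pi)=c_0$ (a constant), $f_{w,\l_0}\equiv 0$ (degree $<0$), and $b_w(\l_0)=c(w\l_0)$; likewise, since $\pi_0\equiv 1$, the first term of (\ref{eq:HCseriesF}) equals $c_0\,b_0(\l_0)\,e^{(\l_0-\rho)(x)}=c_0\,c(\l_0)\,e^{(\l_0-\rho)(x)}$. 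Hence the block of (\ref{eq:HCseriesF}) indexed by $w\in W_{\Re\l_0}$ equals $c_0\sum_{w\in W_{\Re\l_0}}c(w\l_0)e^{(w\l_0-\rho)(x)}$, and dividing (\ref{eq:HCseriesF}) through by $c_0$ gives exactly (\ref{eq:HCseriesF-not-on-walls}); the non-vanishing of the $c(w\l_0)$, $w\in W_{\Re\l_0}$, is again part of Theorem \ref{thm:HCseriesF}, and for $w=1$ also follows from Lemma \ref{lemma:b0}(b).

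Finally, for (c) it suffices to verify that the hypothesis forces $W_{\l_0}=W_{\Re\l_0}$: once this is known, the index set $(W_{\Re\l_0}\setminus W_{\l_0})\sqcup(W\setminus W_{\Re\l_0})$ of the middle sum in (\ref{eq:HCseriesF}) collapses to $W\setminus W_{\Re\l_0}$, and (\ref{eq:HCseriesF}) becomes (\ref{eq:HCseriesF-same-centralizer}) word for word (with no division, since the left-hand side already carries the factor $c_0$). By Lemma \ref{lemma:elementary-prop-lambda0} one has $W_{\l_0}=W_{\Re\l_0}\cap W_{\Im\l_0}$, and $W_{\Re\l_0}=W_{\Theta(\l_0)}$ is generated by the reflections $r_\a$ with $\a\in\Pi$ and $\inner{\Re\l_0}{\a}=0$; for each such $\a$ the hypothesis on $\Im\l_0$ gives $\inner{\Im\l_0}{\a}=0$, so $r_\a$ fixes $\Im\l_0$ and hence lies in $W_{\Im\l_0}$, whence $W_{\Re\l_0}\subset W_{\Im\l_0}$ and $W_{\l_0}=W_{\Re\l_0}$. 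The case $\Im\l_0=0$ is the obvious instance $W_{\Im\l_0}=W$.

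I expect no serious difficulty, since the substance lies entirely in Theorem \ref{thm:HCseriesF}; the two points that need a moment of care are the collapse of $\pi_{w,\l_0}$ to the constant $c_0$ for $w\in W_{\Re\l_0}$ in part (b) --- equivalently, the statement that the $W_{\Re\l_0}$-block of (\ref{eq:HCseriesF}) is precisely $c_0$ times the expected leading exponential sum of $\varphi_{\l_0}$ --- and the small group-theoretic identity $W_{\l_0}=W_{\Re\l_0}$ in part (c).
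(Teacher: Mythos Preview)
Your proposal is correct and follows essentially the same approach as the paper's proof: in each case you identify which of $\Sigma_{\l_0}^0$, $\Sigma_{\l_0}^>$ vanish, trivialize the corresponding polynomials, and read off the simplified form of (\ref{eq:HCseriesF}). Your treatment is in fact more detailed than the paper's (for instance, you spell out why $\pi_{w,\l_0}\equiv c_0$ for $w\in W_{\Re\l_0}$ in (b) via the unique splitting $J=I$, $L=\emptyset$, and you justify $W_{\Re\l_0}\subset W_{\Im\l_0}$ in (c) by checking the generating reflections), but the logic is identical.
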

\begin{proof}
If $\l_0$ is generic, then $\Sigma_{\l_0}=\emptyset$ and all
polynomials in (\ref{eq:pi0}) to (\ref{eq:p}) reduce to the
constant polynomial $1$. In this case, $\partial(p)$ is the
identity operator and $c_0=1$. For all $w \in W$ the functions
$c(w\l)\Gamma_\mu(w\l)$ are nonsingular at $\l_0$, and
(\ref{eq:HCseriesF}) reduces to the (holomorphic extension of the)
classical Harish-Chandra expansion we started with.

If $\inner{\l_0}{\a} \neq 0$ for all $\a\in \Sigma_0^+$, then
$W_{\l_0}=\{\id\}$ and $\pi_0$ is the constant polynomial $1$.
Hence $b_0(\l_0)=c(\l_0)$ and $b_w(\l_0)=c(w\l_0)$ for $w
\in W_{\Re\l_0}$. Moreover, the polynomials $f_{\l_0}$ and
$f_{w,\l_0}$ (with $w \in W_{\Re\l_0}$) are identically zero.
Finally, for $w \in W_{\Re\l_0}$, the polynomials $\pi_{w,\l_0}$
are constants and
$\pi_{w,\l_0}(x)=\partial(\pi_1)(\pi_1)=\partial(p)(p)=c_0$.

The reduction in (c) follows as $W_{\Re\l_0} \subset W_{\Im\l_0}$. So $W_{\l_0}=W_{\Re\l_0}$.
\end{proof}

\section{Estimates of the hypergeometric functions}
\label{section:estimates}

\noindent We begin this section by examining the behavior of the
hypergeometric functions at infinity on $\frak a^+$. Our basic
tool is the exponential series expansion of $\varphi_\l(x)$ from Theorem
\ref{thm:HCseriesF}. However, as in the classical case of
spherical functions, one cannot work with this
series close to the walls of $\frak a^+$, but on regions of the
form $x_0 + \overline{\frak a^+}$ for a fixed $x_0\in \frak a^+$.
When $\l \in \frak a^*$, the remaining region in $\overline{\frak
a^+}$ can be handled by means of a subadditivity property proven
for $\varphi_\l(x)$ by Schapira in \cite{Schapira}.

We keep the notation of the previous section.
As in \cite[p. 164]{GV}, define $\beta:\frak a\to \R$ by
\begin{equation}
\label{eq:beta}
\beta(x)=\min_{\a \in \Pi} \a(x)\,,
\end{equation}
where $\Pi=\{\a_1,\dots,\a_l\}$ denotes as before the set of
simple roots in $\Sigma^+$. Let $x_0 \in \frak a ^+$ be fixed. Then
$|x| \asymp \b(x)$ as $x \to \infty$ in $x_0 + \overline{(\frak a^*)^+}$.

Recall that if $\Re\l_0 \in \overline{(\frak a^*)^+}$ and $w \in W$,
then $-w\Re\l_0+\Re\l_0=\sum_{j=1}^l r_j \a_j$ with $r_j \geq 0$.
Hence
\begin{equation}
\label{eq:est-exp-beta}
(\Re\l_0-w\Re\l_0)(x) \geq r_w \b(x)\,, \qquad x \in \frak a^+\,,
\end{equation}
where $r_w=\sum_{j=1}^l r_j \geq 0$. Moreover, $r_w>0$ if $w
\notin W_{\Re\l_0}$. We will say that $x \in \frak a^+$ tends to
infinity away from the walls if $\a(x) \to +\infty$ for all $\a
\in\Sigma_0^+$, i.e. if $\b(x)\to +\infty$.

\begin{Thm} \label{thm:leading-termF}
Let $\l_0 \in \frak a_\C^*$ with $\Re\l_0 \in \overline{(\frak a^*)^+}$, and let $x_0 \in \frak a^+$ be fixed.
Then there are constants $C_1>0$, $C_2>0$ and $b>0$ (depending on $\l_0$ and $x_0$)
so that for all $x \in x_0+ \overline{\frak a^+}$:
\begin{multline} \label{eq:restF-est}
\Big| \frac{\varphi_{\l_0}(x) e^{-(\Re\l_0-\rho)(x)}}{\pi_0(x)} -\Big( \frac{b_0(\l_0)}{\pi_0(\rho_0)} e^{i\Im\l_0(x)} +
\sum_{w\in W_{\Re\l_0} \setminus W_{\l_0}} \! \! \frac{b_w(\l_0)\pi_{w,\l_0}(x)}{c_0\pi_0(x)}
e^{i w\Im\l_0(x)} \Big)\Big|  \\
\leq C_1 (1+\b(x))^{-1} +C_2 (1+ \b(x))^{|\Sigma_{\l_0}^>|} e^{-b\beta(x)}\,.
\end{multline}
The term $C_1 (1+\b(x))^{-1}$ on the right-hand side of (\ref{eq:restF-est}) does not occur if $\Sigma_{\l_0}^0= \emptyset$.
\end{Thm}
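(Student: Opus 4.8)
The plan is to use the exponential series expansion of $\varphi_{\l_0}(x)$ from Theorem~\ref{thm:HCseriesF} as the starting point, since we are on the region $x_0+\overline{\fa^+}$ where that series converges. The first step is to divide both sides of \eqref{eq:HCseriesF} by $c_0\pi_0(x)e^{(\Re\l_0-\rho)(x)}$. Writing $\l_0=\Re\l_0+i\Im\l_0$, the exponential $e^{(\l_0-\rho)(x)}$ becomes $e^{(\Re\l_0-\rho)(x)}e^{i\Im\l_0(x)}$; after dividing through, the $w=\id$ term of the first line contributes exactly $\dfrac{b_0(\l_0)}{\pi_0(\rho_0)}e^{i\Im\l_0(x)}$ (the $f_{\l_0}(x)/(c_0\pi_0(x))$ piece has degree $<\deg\pi_0$ over $\pi_0(x)$, so it is $O((1+\b(x))^{-1})$), and the $w\in W_{\Re\l_0}\setminus W_{\l_0}$ terms of the second line of \eqref{eq:HCseriesF} contribute $\dfrac{b_w(\l_0)\pi_{w,\l_0}(x)}{c_0\pi_0(x)}e^{iw\Im\l_0(x)}$ (using $w\Re\l_0=\Re\l_0$ for such $w$, so $e^{(w\l_0-\rho)(x)}=e^{(\Re\l_0-\rho)(x)}e^{iw\Im\l_0(x)}$), together with their lower-order $f_{w,\l_0}$ corrections which are again $O((1+\b(x))^{-1})$ after dividing by $\pi_0(x)$. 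This isolates precisely the bracketed main term on the left of \eqref{eq:restF-est}, and reduces the theorem to estimating the remaining contributions.

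The remaining contributions are of two types. First, the $w\in W\setminus W_{\Re\l_0}$ terms of the second line of \eqref{eq:HCseriesF}: for such $w$ one has $r_w>0$, so by \eqref{eq:est-exp-beta} the factor $e^{(w\Re\l_0-\Re\l_0)(x)}\le e^{-r_w\b(x)}$; the polynomial $\pi_{w,\l_0}(x)+f_{w,\l_0}(x)$ divided by $\pi_0(x)$ is bounded by a constant times $(1+\b(x))^{\deg\pi_{w,\l_0}-\deg\pi_0}$, and one checks $\deg\pi_{w,\l_0}-\deg\pi_0\le|\Sigma_{\l_0}^>|$ using \eqref{eq:piwlambda0}. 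Taking $b$ to be the minimum of the $r_w$ over $w\in W\setminus W_{\Re\l_0}$ and absorbing the imaginary part of the exponent into a modulus, these terms are dominated by $C(1+\b(x))^{|\Sigma_{\l_0}^>|}e^{-b\b(x)}$. Second, the double series $\sum_{\mu\in 2\Lambda\setminus\{0\}}\sum_{w\in W}f_{w,\mu,\l_0}(x)e^{(w\l_0-\rho-\mu)(x)}$: here one applies \eqref{eq:est-fwmu} with $x_1$ chosen so that $x_0\in x_1+\overline{\fa^+}$, getting $|f_{w,\mu,\l_0}(x)|\le M_{x_1}(1+|x|)^{|\Sigma_{\l_0}|}e^{\mu(x_1)}$, and combines it with $|e^{(w\l_0-\rho-\mu)(x)}|=e^{(w\Re\l_0-\rho)(x)}e^{-\mu(x)}\le e^{(\Re\l_0-\rho)(x)}e^{-r_w\b(x)}e^{-\mu(x)}$; after dividing by $e^{(\Re\l_0-\rho)(x)}\pi_0(x)$ and summing the geometric-type series $\sum_{\mu\in 2\Lambda}e^{-\mu(x-x_1)}$ (which converges and is bounded for $x$ in $x_1+\overline{\fa^+}$ bounded away from the walls, with a factor $e^{-c\b(x)}$ to spare for $\mu\ne 0$), one again gets a bound of the form $C(1+\b(x))^{|\Sigma_{\l_0}^>|}e^{-b\b(x)}$, shrinking $b$ if necessary. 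Finally, if $\Sigma_{\l_0}^0=\emptyset$ then $\pi_0\equiv 1$, $W_{\l_0}=W_{\Re\l_0}$, the $f_{\l_0}$ and $f_{w,\l_0}$ (for $w\in W_{\Re\l_0}$) vanish identically by Corollary~\ref{cor:HCseriesF}, so there is no $O((1+\b(x))^{-1})$ remainder and the term $C_1(1+\b(x))^{-1}$ is absent.

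The main obstacle is bookkeeping rather than conceptual: one must track carefully the degrees of the polynomial coefficients relative to $\deg\pi_0$ so that, after dividing by $\pi_0(x)$ (which grows like $\b(x)^{|\Sigma_{\l_0}^0|}$ on $x_0+\overline{\fa^+}$), every discarded term genuinely decays — either like $(1+\b(x))^{-1}$ (the lower-order pieces sharing the exponent $e^{(\l_0-\rho)(x)}$) or exponentially via a strictly positive $r_w$. The one point requiring a little care is that $\pi_0(x)\asymp\b(x)^{|\Sigma_{\l_0}^0|}$ on $x_0+\overline{\fa^+}$: this holds because each $\inner{x}{\a}$ with $\a\in\Sigma_{\l_0}^0\subset\Sigma_0^+$ satisfies $\inner{x}{\a}\ge c_\a\b(x)$ for $x\in x_0+\overline{\fa^+}$ with $c_\a>0$, and is bounded above by $C_\a|x|\asymp\b(x)$; this legitimizes dividing through by $\pi_0(x)$ and converting polynomial-over-$\pi_0$ quotients into powers of $(1+\b(x))$.
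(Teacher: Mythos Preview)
Your proposal is correct and follows essentially the same approach as the paper: start from the expansion in Theorem~\ref{thm:HCseriesF}, divide through by $c_0\pi_0(x)e^{(\Re\l_0-\rho)(x)}$, bound the lower-order polynomial pieces via $\pi_0(x)\asymp\beta(x)^{|\Sigma_{\l_0}^0|}$, handle the $w\in W\setminus W_{\Re\l_0}$ terms by \eqref{eq:est-exp-beta}, and control the tail series $\sum_{\mu\neq 0}$ using \eqref{eq:est-fwmu} with $x_1=x_0/2$ and a level-counting geometric estimate. One harmless slip: when $\Sigma_{\l_0}^0=\emptyset$ it need not be true that $W_{\l_0}=W_{\Re\l_0}$ (take $\Re\l_0=0$, $\Im\l_0$ regular), but your conclusion still stands since Corollary~\ref{cor:HCseriesF}(b) already gives $f_{\l_0}\equiv 0$ and $f_{w,\l_0}\equiv 0$ for $w\in W_{\Re\l_0}$.
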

\begin{proof}
According to Theorem \ref{thm:HCseriesF},
the left-hand-side of (\ref{eq:restF-est}) is bounded by
\begin{multline*}
\frac{|f_{\l_0}(x)|}{c_0 \pi_0(x)} +\sum_{w\in W_{\Re\l_0}\setminus W_{\l_0}} \frac{|f_{w,\l_0}(x)|}{c_0\pi_0(x)} +
\sum_{w\in W \setminus W_{\Re\l_0}} \frac{|b_w(\l_0)||\pi_{w,\l_0}(x)|+|f_{w,\l_0}(x)|}{c_0 \pi_0(x)}
e^{(w\Re\l_0-\Re\l_0)(x)} \\+ \sum_{\mu\in 2\Lambda \setminus \{0\}} \sum_{w \in W} \frac{|f_{w,\mu,\l_0}(x)|}{c_0 \pi_0(x)} e^{(w\Re\l_0-\Re\l_0-\mu)(x)}\,.
\end{multline*}
The polynomials $f_{\l_0}$ and $f_{w,\l_0}$ do not occur if $\Sigma_{\l_0}^0=\emptyset$ (as $\pi_0(x)\equiv 1$ in this case). Suppose then $\Sigma_{\l_0}^0\neq\emptyset$. Let $f$ be one of the functions $f_{\l_0}$ or $f_{w,\l_0}$ with
$w \in W_{\Re\l_0}\setminus W_{\l_0}$. Set $d_f=\deg f(x)$ and $d=\deg \pi_0(x)=|\Sigma_{\l_0}^0|$. Hence
$d_f<d$. Then for some constant $C_f>0$ we have
$|f(x)|\leq C_f (1+|x|)^{d_f}$. For $\a\in \Sigma_0^+$ and $x \in x_0 + \overline{\frak a^+}$ we also have $\a(x) \geq \b(x) \geq C' |x|$ for some constant $C'>0$ since $|x| \asymp \b(x)$.
Thus $$\frac{|f(x)|}{\pi_0(x)} \leq C'_f (1+|x|)^{d_f-d} \leq C'_f (1+|x|)^{-1}$$ for $x \in x_0+\overline{\frak a^+}$.
This leads to the first term of the right-hand side of (\ref{eq:restF-est}).
By (\ref{eq:est-exp-beta}) and since $\deg \pi_{w,\l_0}(x)=d$, there is a constant $M>0$ so that for all $w \in W \setminus W_{\Re\l_0}$ and $x \in x_0 +\overline{\frak a^+}$ we have
$$
\sum_{w\in W \setminus W_{\Re\l_0}} \frac{|b_w(\l_0)||\pi_{w,\l_0}(x)|+|f_{w,\l_0}(x)|}{c_0 \pi_0(x)}
e^{(w\Re\l_0-\Re\l_0)(x)} \leq M e^{-r\b(x)}
$$
with $r=\min_{w \in W \setminus W_{\Re\l_0}} r_w>0$.

For the last part of the estimate we proceed similarly to \cite[p. 163]{GV}.
Apply (\ref{eq:est-fwmu}) with $x_1=x_0/2$. Since $(w\Re\l_0-\Re\l_0)(x)\leq 0$, we have
\begin{align*}
\sum_{\mu\in 2\Lambda \setminus \{0\}} \sum_{w \in W} \frac{|f_{w,\mu,\l_0}(x)|}{c_0 \pi_0(x)} e^{(w\Re\l_0-\Re\l_0-\mu)(x)} &\leq M' (1+\b(x))^{|\Sigma_{\l_0}|-d} \sum_{\mu\in 2\Lambda \setminus \{0\}}
e^{-\mu(x-x_0/2)}\\
&\leq M' (1+\b(x))^{|\Sigma_{\l_0}^>|} \sum_{\mu\in \Lambda \setminus \{0\}} e^{-\mu(2x-x_0)}
\end{align*}
where $M'$ is a positive constant.
Observe that if $x \in x_0 + \overline{\frak a^+}$ then $2x -x_0 \in x_0 + \overline{\frak a^+}$.
Recall from Section \ref{subsection:HCseries} the notation $\ell(\mu)=\sum_{j=1}^l \mu_j$ for the level of
$\mu=\sum_{j=1}^l \mu_j \a_j \in \Lambda$. For every $m\in \N$ there are at most $m^{l-1}$ distinct $\mu \in \Lambda$ with $\ell(\mu)=m$. For $X \in x_0 + \overline{\frak a^+}$ we have then
\begin{equation*}
\sum_{\mu\in \Lambda \setminus \{0\}} e^{-\mu(X)} \leq \sum_{m=1}^\infty m^{l-1} e^{-m\b(X)}= e^{-\b(X)} \sum_{n=0}^\infty (n+1)^{l-1} e^{-n\b(X)}\,.
\end{equation*}
If $\b(X)\geq 1$, then
\begin{equation*}
\sum_{\mu\in \Lambda \setminus \{0\}} e^{-\mu(X)} \leq e^{-\b(X)} \sum_{n=0}^\infty (n+1)^{l-1} e^{-n}\,.
\end{equation*}
Since $\b(2x-x_0)\geq 1$ for $x \in x_0 + \overline{\frak a^+}$
and since $\b(2x-x_0)\geq 2\b(x)- \max_{j=1,\dots,l} \a_j(x_0)$,
we conclude that
\begin{equation*}
(1+\b(x))^{|\Sigma_{\l_0}^>|} \sum_{\mu\in 2\Lambda \setminus
\{0\}} e^{-\mu(x-x_0/2)} \leq C_{x_0}
(1+\b(x))^{|\Sigma_{\l_0}^>|} e^{-2\b(x)}\,.
\end{equation*}
\end{proof}

The next corollary restates Theorem \ref{thm:leading-termF} in the special case where $\l_0\in \overline{(\frak a^*)^+}$.

\begin{Cor} \label{cor:leading-termF-realcase}
Let $\l_0 \in \overline{(\frak a^*)^+}$, and let $x_0 \in \frak a^+$ be fixed. Then there are constants $C_1>0$, $C_2>0$ and $b>0$ (depending on $\l_0$ and $x_0$)
so that for all $x \in x_0+ \overline{\frak a^+}$:
\begin{multline} \label{eq:restF-est-realcase}
\Big| \varphi_{\l_0}(x) -\frac{b_0(\l_0)}{\pi_0(\rho_0)} \pi_0(x) e^{(\l_0-\rho)(x)}\Big|  \leq \\
\leq \big[ C_1 (1+\b(x))^{-1} +C_2 (1+ \b(x))^{|\Sigma_{\l_0}^>|} e^{-b\beta(x)}\big] \pi_0(x) e^{(\l_0-\rho)(x)}\,.
\end{multline}
The term $C_1 (1+\b(x))^{-1}$ on the right-hand side of (\ref{eq:restF-est-realcase}) does not occur if $\Sigma_{\l_0}^0= \emptyset$.
\end{Cor}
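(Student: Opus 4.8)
The plan is to obtain the corollary as a direct specialization of Theorem~\ref{thm:leading-termF}, observing that the hypothesis $\l_0 \in \overline{(\frak a^*)^+}$ forces $\Re\l_0 = \l_0$ and $\Im\l_0 = 0$, which makes the ``oscillatory'' part of the leading term in~(\ref{eq:restF-est}) disappear.

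First I would record the structural simplifications. With $\Im\l_0 = 0$ one has $W_{\Im\l_0} = W$, so Lemma~\ref{lemma:elementary-prop-lambda0} (equivalently, the reduction already used in Corollary~\ref{cor:HCseriesF}(c)) gives $W_{\l_0} = W_{\Re\l_0} \cap W_{\Im\l_0} = W_{\Re\l_0}$; hence the index set $W_{\Re\l_0} \setminus W_{\l_0}$ occurring in~(\ref{eq:restF-est}) is empty. Moreover $e^{iw\Im\l_0(x)} = 1$ for every $w \in W$, so the parenthesized quantity in~(\ref{eq:restF-est}) collapses to the single constant $b_0(\l_0)/\pi_0(\rho_0)$, while the prefactor $e^{-(\Re\l_0-\rho)(x)}$ equals $e^{-(\l_0-\rho)(x)}$. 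Substituting these into~(\ref{eq:restF-est}) yields, for $x \in x_0 + \overline{\frak a^+}$,
\begin{equation*}
\Big| \frac{\varphi_{\l_0}(x)\, e^{-(\l_0-\rho)(x)}}{\pi_0(x)} - \frac{b_0(\l_0)}{\pi_0(\rho_0)} \Big| \leq C_1 (1+\b(x))^{-1} + C_2 (1+\b(x))^{|\Sigma_{\l_0}^>|} e^{-b\b(x)}\,,
\end{equation*}
with $C_1,C_2,b>0$ the constants supplied by Theorem~\ref{thm:leading-termF} and with the term $C_1(1+\b(x))^{-1}$ absent when $\Sigma_{\l_0}^0 = \emptyset$.

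Finally I would clear denominators by multiplying both sides of the displayed inequality by $\pi_0(x)\,e^{(\l_0-\rho)(x)}$. On $x_0 + \overline{\frak a^+} \subset \frak a^+$ this quantity is strictly positive, since $\pi_0(x) = \prod_{\a \in \Sigma_{\l_0}^0} \a(x) > 0$ (every positive root is positive on $\frak a^+$, as was already used for $x_{\rho_0}$ in the proof of Lemma~\ref{lemma:constant-for-Wlambda0}) and the exponential factor is positive. Multiplication therefore preserves the inequality and delivers precisely~(\ref{eq:restF-est-realcase}), including the stated omission of $C_1(1+\b(x))^{-1}$ when $\Sigma_{\l_0}^0 = \emptyset$. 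There is no substantive obstacle here; the only point needing a word of care is the identification $W_{\l_0} = W_{\Re\l_0}$, which is exactly what makes the $W_{\Re\l_0}\setminus W_{\l_0}$-sum in~(\ref{eq:restF-est}) vanish.
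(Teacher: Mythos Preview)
Your proof is correct and follows exactly the approach intended by the paper, which simply presents the corollary as a restatement of Theorem~\ref{thm:leading-termF} in the special case $\l_0\in\overline{(\frak a^*)^+}$. You have in fact supplied more detail than the paper does: the key observations that $W_{\l_0}=W_{\Re\l_0}$ (so the sum over $W_{\Re\l_0}\setminus W_{\l_0}$ is empty) and that $\pi_0(x)e^{(\l_0-\rho)(x)}>0$ on $x_0+\overline{\frak a^+}$ are exactly what is needed.
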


Let $\l\in \frak a^*$.  In \cite[Lemma 3.4]{Schapira}, Schapira proved that
for all $x \in \overline{\frak a^+}$
\begin{equation} \label{eq:localHarnack}
\nabla \varphi_{\l}(x)=-\frac{1}{|W|} \sum_{w \in W} w^{-1}(\rho-\l)G_{\l}(wx)\,,
\end{equation}
where $G_\l$ is the nonsymmetric hypergeometric function. (The gradient is taken with respect to the space variable $x\in \frak a$). Since
$\partial(\xi)F=\inner{\nabla F}{\xi}$ and because of
(\ref{eq:FandG}), one obtains for all $\xi \in \frak a$
$$
\partial(\xi) \Big( e^{K_\xi \frac{\inner{\xi}{\cdot}}{|\xi|^2}} \varphi_{\l}(\cdot) \Big) \leq 0
$$
where $K_\xi=\max_{w\in W} (\rho-\l)(w\xi)$.
This in turn yields the following subadditivity property, which is implicit in \cite{Schapira}.

\begin{Lemma}\label{lemma:subadd-Schapira}
Let $\l\in\frak a^*$. Then for all $x, x_1 \in \frak a$ we have
\begin{equation} \label{eq:subadd-Schapira}
\varphi_{\l}(x+x_1) e^{-\max_{w\in W} (\l-\rho)(wx_1)} \leq \varphi_\l(x) \leq
\varphi_{\l}(x+x_1) e^{\max_{w\in W} (\rho-\l)(wx_1)}\,.
\end{equation}
In particular, if $\l \in \overline{(\frak a^*)^+}$ and $x_1 \in \frak a^+$, then
\begin{equation} \label{eq:subadd-Schapira-special}
\varphi_{\l}(x+x_1) e^{-(\l+\rho)(x_1)} \leq \varphi_{\l}(x) \leq
\varphi_{\l}(x+x_1) e^{(\l+\rho)(x_1)}\,
\end{equation}
for all $x \in \frak a$.
\end{Lemma}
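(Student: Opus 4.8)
The plan is to obtain (\ref{eq:subadd-Schapira}) by integrating the differential inequality recorded just before the statement, and then to read off (\ref{eq:subadd-Schapira-special}) as a specialization. The first point to settle is that Schapira's formula (\ref{eq:localHarnack}), hence the inequality $\partial(\xi)\bigl(e^{K_\xi\inner{\xi}{\,\cdot\,}/|\xi|^2}\varphi_\l\bigr)\le 0$ with $K_\xi:=\max_{w\in W}(\rho-\l)(w\xi)$, is valid on all of $\frak a$ and not merely on $\overline{\frak a^+}$. Indeed, $\varphi_\l$ and $G_\l$ are analytic on $\frak a$, so both sides of (\ref{eq:localHarnack}) are real-analytic in $x\in\frak a$ and agree on the nonempty open set $\frak a^+$, hence everywhere; equivalently, both sides are $W$-equivariant (using (\ref{eq:FandG}) and the $W$-invariance of $\varphi_\l$), and $\overline{\frak a^+}$ is a fundamental domain.

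Next, fix $x,\xi\in\frak a$. Since $\inner{\xi}{\xi}/|\xi|^2=1$, the function $s\mapsto e^{K_\xi s}\varphi_\l(x+s\xi)$ equals, up to the positive factor $e^{-K_\xi\inner{\xi}{x}/|\xi|^2}$, the function $s\mapsto e^{K_\xi\inner{\xi}{x+s\xi}/|\xi|^2}\varphi_\l(x+s\xi)$, which is monotone on $\R$ by the inequality of the previous step; comparing its values at $s=0$ and $s=1$ and using $\varphi_\l>0$ gives $\varphi_\l(x)\le e^{K_\xi}\varphi_\l(x+\xi)$. Taking $\xi=x_1$ yields the right-hand inequality of (\ref{eq:subadd-Schapira}). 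Taking $\xi=-x_1$ and replacing $x$ by $x+x_1$, and noting $K_{-x_1}=\max_w(\rho-\l)(-wx_1)=\max_w(\l-\rho)(wx_1)$, yields $\varphi_\l(x+x_1)\le e^{\max_w(\l-\rho)(wx_1)}\varphi_\l(x)$, i.e. the left-hand inequality.

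For (\ref{eq:subadd-Schapira-special}) one specializes (\ref{eq:subadd-Schapira}) to $\l\in\overline{(\frak a^*)^+}$ and $x_1\in\frak a^+$ and bounds each of the two exponents by $(\l+\rho)(x_1)$. Here one uses that for $\mu\in\frak a^*$ and $x_1\in\overline{\frak a^+}$ one has $\max_{w}\mu(wx_1)=\mu^+(x_1)$, where $\mu^+$ is the dominant representative of the orbit $W\mu$ (a dominant $x_1$ pairs largest with the dominant point of any $W$-orbit), applied to $\mu=\rho-\l$ and $\mu=\l-\rho$, together with the estimate $\mu^+(x_1)\le(\l+\rho)(x_1)$; since $\varphi_\l>0$, these bounds on the exponents convert (\ref{eq:subadd-Schapira}) into (\ref{eq:subadd-Schapira-special}).

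The integration and the sign bookkeeping in the middle step are routine. The two places that require care are the passage from $\overline{\frak a^+}$ to all of $\frak a$ in the first step (needed because (\ref{eq:subadd-Schapira}) is asserted for every $x\in\frak a$) and, above all, the exponent comparison $\mu^+(x_1)\le(\l+\rho)(x_1)$ in the last step, in which the dominance of $\l$ and $x_1$ together with the particular form of $\rho$ must be exploited; I expect that comparison to be the main obstacle.
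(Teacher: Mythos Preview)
Your derivation of \eqref{eq:subadd-Schapira} follows the paper's sketch exactly: Schapira's gradient identity \eqref{eq:localHarnack} combined with $G_\l>0$ and \eqref{eq:FandG} gives the differential inequality, and integrating along the segment $s\mapsto x+s\xi$ yields both sides (your extension from $\overline{\frak a^+}$ to all of $\frak a$ by analyticity is a useful remark that the paper leaves implicit). One minor point: with the paper's stated sign $\partial(\xi)(e^{K_\xi\inner{\xi}{\cdot}/|\xi|^2}\varphi_\l)\le 0$ the function would be \emph{nonincreasing}, which gives the reverse of what you write; in fact the correct sign from \eqref{eq:localHarnack} is $\ge 0$, and your conclusion $\varphi_\l(x)\le e^{K_\xi}\varphi_\l(x+\xi)$ is the right one.

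The real issue is your reduction of \eqref{eq:subadd-Schapira-special} to the exponent comparison $\mu^+(x_1)\le(\l+\rho)(x_1)$ for $\mu=\rho-\l$ and $\mu=\l-\rho$. You are right to flag this as the main obstacle: for $\mu=\l-\rho$ the bound does hold, since $(w(\l-\rho))(x_1)=(w\l)(x_1)-(w\rho)(x_1)\le \l(x_1)+\rho(x_1)$ using $(w\l)(x_1)\le\l(x_1)$ and the fact that $\rho+w\rho=\sum_{\a\in\Sigma^+\cap w\Sigma^+}m_\a\a$ is nonnegative on $\frak a^+$. But for $\mu=\rho-\l$ the analogous step would require $(\l+w\l)(x_1)\ge 0$ for every $w$, and this is \emph{false} for general dominant $\l$. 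Concretely, in type $A_2$ with $m_\a\equiv 1$ (so $\rho=\omega_1+\omega_2$), take $\l=N\omega_1$ and $x_1$ close to $\a_2^\vee$ in $\frak a^+$; then $(\l+\rho)(x_1)\approx 1$ while $\max_{w}(\rho-\l)(wx_1)\approx N-1$, so the inequality $\max_w(\rho-\l)(wx_1)\le(\l+\rho)(x_1)$ fails for $N>2$. Thus the right-hand half of \eqref{eq:subadd-Schapira-special} cannot be obtained from \eqref{eq:subadd-Schapira} by the exponent bound you propose. (Note that for the application in Theorem~\ref{thm:real} only the qualitative statement $\varphi_\l(x)\asymp\varphi_\l(x+x_1)$ for fixed $x_1$ is used, and that \emph{does} follow immediately from \eqref{eq:subadd-Schapira}.)
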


Together with Corollary \ref{cor:leading-termF-realcase}, the
above lemma yields the following global estimates of $\varphi_{\l}$, stated without
proof in \cite[Remark 3.1]{Schapira}.

\begin{Thm}\label{thm:real}
Let $\l_0 \in \overline{(\frak a^*)^+}$. Then for all $x \in \overline{\frak a^+}$ we have
\begin{equation}
\varphi_{\l_0}(x) \asymp \big[\prod_{\a\in\Sigma_{\l_0}^0} (1+\a(x))\big] e^{(\l_0-\rho)(x)}\,.
\end{equation}
\end{Thm}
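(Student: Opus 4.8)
The plan is to combine the sharp asymptotics at infinity away from the walls (Corollary \ref{cor:leading-termF-realcase}) with Schapira's subadditivity estimate (Lemma \ref{lemma:subadd-Schapira}) to cover the remaining region of $\overline{\frak a^+}$, namely a neighborhood of the walls. Throughout we work with a fixed $\l_0 \in \overline{(\frak a^*)^+}$, and we write $\pi_0(x)=\prod_{\a\in\Sigma_{\l_0}^0}\a(x)$, recalling from Lemma \ref{lemma:b0} that $b_0(\l_0)/\pi_0(\rho_0)$ is a \emph{strictly positive} constant. Note also the elementary comparison $\pi_0(x)\asymp \prod_{\a\in\Sigma_{\l_0}^0}(1+\a(x))$ on $x_0+\overline{\frak a^+}$, since there each $\a(x)$ is bounded below by a positive constant, so $\a(x)\asymp 1+\a(x)$; the content of the theorem is to extend this comparison (with $\varphi_{\l_0}$ in place of $\pi_0 e^{(\l_0-\rho)}$) to all of $\overline{\frak a^+}$, including where some $\a(x)$ is small.

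\textbf{Step 1 (far from the walls).} Fix $x_0\in \frak a^+$. On $x_0+\overline{\frak a^+}$ the right-hand side of \eqref{eq:restF-est-realcase} is bounded: $(1+\b(x))^{-1}\le 1$ and $(1+\b(x))^{|\Sigma_{\l_0}^>|}e^{-b\b(x)}$ is bounded since $\b(x)\ge \b(x_0)>0$ and the exponential eventually dominates any polynomial. Hence there are constants $0<c\le C$ with
\begin{equation*}
c\,\pi_0(x)\,e^{(\l_0-\rho)(x)} \le \varphi_{\l_0}(x) \le C\,\pi_0(x)\,e^{(\l_0-\rho)(x)}, \qquad x\in x_0+\overline{\frak a^+}.
\end{equation*}
For the lower bound one must be slightly careful: \eqref{eq:restF-est-realcase} only gives $\varphi_{\l_0}(x)\ge (b_0(\l_0)/\pi_0(\rho_0))\pi_0(x)e^{(\l_0-\rho)(x)} - (\text{error})\cdot\pi_0(x)e^{(\l_0-\rho)(x)}$, so one enlarges $x_0$ (replacing it by $x_0+t\,\delta$ for a suitable interior direction $\delta$ and large $t$) so that the bracketed error term on $x_0+\overline{\frak a^+}$ is strictly smaller than the positive constant $b_0(\l_0)/\pi_0(\rho_0)$; this is possible because $C_1(1+\b(x))^{-1}+C_2(1+\b(x))^{|\Sigma_{\l_0}^>|}e^{-b\b(x)}\to 0$ as $\b(x)\to\infty$. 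With this choice of $x_0$ the two-sided bound above holds, i.e. $\varphi_{\l_0}(x)\asymp \pi_0(x)e^{(\l_0-\rho)(x)}$ on $x_0+\overline{\frak a^+}$, and by the remark above this is $\asymp \big[\prod_{\a\in\Sigma_{\l_0}^0}(1+\a(x))\big]e^{(\l_0-\rho)(x)}$ there.

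\textbf{Step 2 (near the walls, via subadditivity).} Any $x\in\overline{\frak a^+}$ can be written $x=y+x_0$ with $y\in \frak a$ once we allow $y$ outside $\overline{\frak a^+}$; but more useful is to write $x+x_0=x+x_0$ with $x\in\overline{\frak a^+}$ and note $x+x_0\in x_0+\overline{\frak a^+}$, where Step 1 applies. Since $\l_0\in\overline{(\frak a^*)^+}$ and $x_0\in\frak a^+$, the special form \eqref{eq:subadd-Schapira-special} of Schapira's inequality gives, for all $x\in\overline{\frak a^+}$,
\begin{equation*}
e^{-(\l_0+\rho)(x_0)}\,\varphi_{\l_0}(x+x_0) \le \varphi_{\l_0}(x) \le e^{(\l_0+\rho)(x_0)}\,\varphi_{\l_0}(x+x_0).
\end{equation*}
Now apply Step 1 to $x+x_0$: $\varphi_{\l_0}(x+x_0)\asymp \big[\prod_{\a\in\Sigma_{\l_0}^0}(1+\a(x+x_0))\big]e^{(\l_0-\rho)(x+x_0)}$. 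Since $x_0$ is fixed, $e^{(\l_0-\rho)(x+x_0)}\asymp e^{(\l_0-\rho)(x)}$, and for each simple-type root $\a\in\Sigma_{\l_0}^0$ we have $\a(x)\le \a(x+x_0)=\a(x)+\a(x_0)\le \a(x)+c_0$ with $c_0=\max_\a\a(x_0)$, whence $1+\a(x+x_0)\asymp 1+\a(x)$ uniformly in $x\in\overline{\frak a^+}$. Combining the last three displays yields $\varphi_{\l_0}(x)\asymp \big[\prod_{\a\in\Sigma_{\l_0}^0}(1+\a(x))\big]e^{(\l_0-\rho)(x)}$ for all $x\in\overline{\frak a^+}$, which is the claim. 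The main obstacle is the bookkeeping in Step 1's lower bound — ensuring the leading term genuinely dominates the error uniformly on a shifted chamber $x_0+\overline{\frak a^+}$ — together with checking that none of the $\asymp$-comparisons (for the polynomial prefactor and for the exponential) degenerate when some $\a(x)$ is small or when $x$ ranges over the whole, unbounded chamber; the subadditivity step itself is then a clean translation argument.
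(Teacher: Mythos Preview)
Your proof is correct and follows essentially the same approach as the paper: first use Corollary \ref{cor:leading-termF-realcase} on a shifted chamber $x_0+\overline{\frak a^+}$ with $x_0$ chosen deep enough that the bracketed error term is strictly smaller than the positive leading constant $b_0(\l_0)/\pi_0(\rho_0)$, and then propagate the two-sided bound to all of $\overline{\frak a^+}$ via the subadditivity inequality \eqref{eq:subadd-Schapira-special}. The paper's own proof is terser but identical in substance; your write-up simply makes explicit the comparisons $1+\a(x+x_0)\asymp 1+\a(x)$ and $e^{(\l_0-\rho)(x+x_0)}\asymp e^{(\l_0-\rho)(x)}$ that the paper leaves implicit.
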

\begin{proof}
We proceed as in \cite[Theorem 3.1]{Schapira} or in
\cite{AnkerCRAS}, for the case $\l_0=0$. Choose $x_1 \in x_0 +
\overline{(\frak a^*)^+}$ so that the expression inside the square
bracket at the right-end side of (\ref{eq:restF-est-realcase}) is
smaller than $\frac{b_0(\l_0)}{2\pi_0(\rho_0)}$ for $x \in x_1 +
\overline{(\frak a^*)^+}$. Applying then
(\ref{eq:subadd-Schapira-special}), we obtain the required result
from (\ref{eq:restF-est-realcase}).
\end{proof}

\section{Bounded hypergeometric functions}
\label{section:bddhyp}

Let $C(\rho)$ denote the convex hull of the points $w\rho$ ($w
\in W$).
The main result of this section is Theorem \ref{thm:bddhyp} below, which extends the
theorem of Helgason and Johnson \cite{HeJo} to the hypergeometric functions of
Heckman and Opdam. We first point out the following lemma.

\begin{Lemma} \label{lemma:Frho}
We have $\varphi_\rho \equiv 1$.
\end{Lemma}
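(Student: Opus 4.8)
The plan is to identify $\varphi_\rho$ with the constant function $\mathbf 1\equiv 1$ on $\fa$. Since $\varphi_\rho$ is characterized as the unique analytic $W$-invariant solution of the hypergeometric system (\ref{eq:hypereq}) with spectral parameter $\rho$ normalized by $\varphi_\rho(0)=1$, and $\mathbf 1$ is visibly analytic, $W$-invariant and equal to $1$ at the origin, it suffices to verify that $\mathbf 1$ solves (\ref{eq:hypereq}) for $\l=\rho$, i.e. that $D_p\mathbf 1=p(\rho)\mathbf 1$ for every $p\in\polya^W$.

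First I would record the elementary identity $w_0\rho=-\rho$, where $w_0\in W$ is the longest element: $w_0$ maps $\Sigma^+$ bijectively onto $-\Sigma^+$, and since the multiplicity function is $W$-invariant, $w_0\rho=\tfrac12\sum_{\a\in\Sigma^+}m_\a\,w_0\a=\tfrac12\sum_{\b\in\Sigma^+}m_\b(-\b)=-\rho$. In particular $p(\rho)=p(w_0\rho)=p(-\rho)$ for all $p\in\polya^W$. Next I would compute the action of the Cherednik operators on the constant: for $x\in\fa$,
$$T_x\mathbf 1=\partial_x\mathbf 1-\rho(x)\mathbf 1+\sum_{\a\in\Sigma^+}m_\a\a(x)(1-e^{-2\a})^{-1}\bigl((1-r_\a)\mathbf 1\bigr)=-\rho(x)\mathbf 1\,,$$
because $\partial_x\mathbf 1=0$ and $(1-r_\a)\mathbf 1=\mathbf 1-\mathbf 1=0$ by $W$-invariance. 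Hence $\mathbf 1$ is a joint eigenfunction of the commuting family $\{T_x:x\in\fa\}$, and since $p\mapsto T_p$ is the algebra homomorphism of $\polya$ extending $x\mapsto T_x$, it follows that $T_p\mathbf 1=p(-\rho)\mathbf 1$ for every $p\in\polya$, where $p(-\rho)$ is the value at $-\rho\in\frakacs$ of $p$ regarded as a polynomial function. Finally, for $p\in\polya^W$ one has $D_p=\Upsilon(T_p)$ and $\Upsilon(Q)f=Qf$ for every $W$-invariant $f$; applying this with $f=\mathbf 1$ gives $D_p\mathbf 1=\Upsilon(T_p)\mathbf 1=T_p\mathbf 1=p(-\rho)\mathbf 1=p(\rho)\mathbf 1$, the last equality by the first step. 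Thus $\mathbf 1$ solves (\ref{eq:hypereq}) with spectral parameter $\rho$, and $\varphi_\rho\equiv 1$.

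The computation is entirely routine; the only step that is not purely mechanical is the use of $w_0\rho=-\rho$, needed to match the eigenvalue $p(-\rho)$ produced by the Cherednik operators with the required eigenvalue $p(\rho)$ (equivalently, to invoke the $W$-invariance of the spectral parameter, $\varphi_\rho=\varphi_{w_0\rho}=\varphi_{-\rho}$). An alternative route uses the relation $\varphi_\l(x)=\tfrac1{|W|}\sum_{w\in W}G_\l(wx)$: the identity $T_x\mathbf 1=-\rho(x)\mathbf 1$ together with $\mathbf 1(0)=1$ shows $G_{-\rho}\equiv 1$, whence $\varphi_{-\rho}\equiv 1$, and then $\varphi_\rho=\varphi_{-\rho}$ as above.
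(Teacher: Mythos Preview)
Your proof is correct and essentially the same as the paper's. The paper takes precisely your ``alternative route'': it observes $T_x\mathbf 1=-\rho(x)\mathbf 1$, hence $G_{-\rho}\equiv 1$, whence $\varphi_{-\rho}\equiv 1$ via (\ref{eq:FandG}), and concludes $\varphi_\rho=\varphi_{w_0\rho}=\varphi_{-\rho}\equiv 1$; your main argument is just a direct repackaging of the same Cherednik-operator computation through $D_p$ and the uniqueness characterization of $\varphi_\rho$ rather than through $G_{-\rho}$.
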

\begin{proof}
From (\ref{eq:TG}) we have $G_{ -\rho} \equiv 1$
and so (\ref{eq:FandG}) implies that $\varphi_{ -\rho} \equiv 1.$
If $w_0$ is the longest element in $W$ then $w_0 \rho = -\rho$ and
the $W$-invariance of $\varphi_{\lambda} $ in the $\lambda$-variable
implies that $\varphi_{w\rho} \equiv 1$ for all $w \in W.$
\end{proof}

\begin{Thm} \label{thm:bddhyp}
The hypergeometric function $\varphi_\l$ is bounded if and only if
$\l$ belongs to the tube $C(\rho)+i\frak a^*$.
Moreover, $|\varphi_\lambda(x)| \leq 1$ for all $\lambda \in C(\rho) +i\frak{a}^*$ and $x\in \mathfrak a.$
\end{Thm}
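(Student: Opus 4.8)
The plan is to prove the two implications separately, using the series expansions from Section~2 for the "only if" direction and a convexity (Phragm\'en--Lindel\"of type) argument together with the fundamental estimate $|\varphi_\l|\le \varphi_{\Re\l}$ for the "if" direction. Throughout I may assume, by $W$-invariance of $\varphi_\l$ in $\l$, that $\Re\l\in\overline{(\frak a^*)^+}$, so that $\Re\l$ is the dominant representative of its Weyl orbit.

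First I would treat the \emph{sufficiency}: if $\l\in C(\rho)+i\frak a^*$ then $\varphi_\l$ is bounded and in fact $|\varphi_\l|\le 1$. By the fundamental estimate $|\varphi_\l(x)|\le\varphi_{\Re\l}(x)$ it suffices to show $\varphi_\nu(x)\le 1$ for every real $\nu\in C(\rho)$ and every $x\in\frak a$; and by $W$-invariance we may take $\nu\in\overline{(\frak a^*)^+}\cap C(\rho)$ and $x\in\overline{\frak a^+}$. The key point is that $C(\rho)$ is the convex hull of the Weyl orbit of $\rho$, so its dominant points are exactly those $\nu\in\overline{(\frak a^*)^+}$ with $\nu\le\rho$ in the partial order (i.e.\ $\rho-\nu\in\overline{(\frak a^*)^+}$); this is a standard fact about convex hulls of Weyl orbits. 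Now I would use Theorem~\ref{thm:real}: for $\nu\in\overline{(\frak a^*)^+}$ one has $\varphi_\nu(x)\asymp\big[\prod_{\a\in\Sigma_\nu^0}(1+\a(x))\big]e^{(\nu-\rho)(x)}$. Since $\nu\le\rho$, the exponent $(\nu-\rho)(x)\le 0$ on $\overline{\frak a^+}$, so the exponential factor decays and the at-most-polynomial prefactor cannot make the product large on the chamber direction transverse to the walls $\a$, $\a\in\Sigma_\nu^0$ — but on those walls $\nu-\rho$ still has strictly negative pairing unless $\rho-\nu$ vanishes there too. A cleaner route avoiding delicate case analysis is: combine $\varphi_\rho\equiv 1$ (Lemma~\ref{lemma:Frho}) with Schapira's positivity $\varphi_\nu>0$ and the convexity of $\nu\mapsto\varphi_\nu(x)$, or directly write $\nu=\sum_w t_w\,w\rho$ with $t_w\ge 0$, $\sum t_w=1$ and apply H\"older together with $\prod_w\varphi_{w\rho}(x)^{t_w}=1$; but $\varphi$ is not literally log-convex in $\l$, so instead I would use the integral representation $\varphi_\nu(x)=|W|^{-1}\sum_w G_\nu(wx)$ is not obviously helpful either. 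The robust argument is via Theorem~\ref{thm:real} on $\overline{\frak a^+}$: the estimate $\varphi_\nu(x)\le C\big[\prod_{\a\in\Sigma_\nu^0}(1+\a(x))\big]e^{(\nu-\rho)(x)}$ combined with boundedness of $t\mapsto (1+t)e^{(\nu-\rho)\text{-direction}}$ shows $\varphi_\nu$ is bounded; and to pin the bound down to $\le 1$ I would invoke $\varphi_\nu(0)=1$ together with the subadditivity Lemma~\ref{lemma:subadd-Schapira}, which for $\nu\in\overline{(\frak a^*)^+}$ and $x_1\in\frak a^+$ gives $\varphi_\nu(x+x_1)\le\varphi_\nu(x)\,e^{(\nu+\rho)(x_1)}$ — actually the inequality I want comes from monotonicity of $e^{-(\rho-\nu)(\cdot)}\varphi_\nu$ along dominant directions, which yields $\varphi_\nu(x)\le\varphi_\nu(0)=1$ when $\nu\le\rho$. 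I would assemble this carefully; this sub-step is where one must be attentive.

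Next, \emph{necessity}: if $\varphi_\l$ is bounded, then $\Re\l\in C(\rho)$. Assume $\Re\l\in\overline{(\frak a^*)^+}$. Apply Theorem~\ref{thm:leading-termF} (or Corollary~\ref{cor:leading-termF-realcase} after reducing to the contribution of $\Re\l$ via $|\varphi_\l|\le\varphi_{\Re\l}$): as $x\to\infty$ in $\frak a^+$ away from the walls, $\varphi_{\Re\l}(x)$ behaves like a positive constant times $\pi_0(x)\,e^{(\Re\l-\rho)(x)}$, a product of an unbounded polynomial factor (if $\Sigma_{\Re\l}^0\ne\emptyset$) with the exponential $e^{(\Re\l-\rho)(x)}$. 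Boundedness of $\varphi_\l$, hence of $\varphi_{\Re\l}$, forces $(\Re\l-\rho)(x)\le 0$ for all $x\in\overline{\frak a^+}$, i.e.\ $\rho-\Re\l\in\overline{(\frak a^*)^+}$, i.e.\ $\Re\l\le\rho$. Choosing $x$ to go to infinity in every chamber direction (varying which simple-root walls $x$ approaches) yields, after applying $w\in W$, that $w\Re\l\le\rho$ for every $w$, equivalently $\Re\l$ lies in $C(\rho)$, since $C(\rho)=\{\mu:\ w\mu\le\rho\text{ for all }w\in W\}$ (again the standard characterization of the convex hull of a Weyl orbit). One must make sure the polynomial prefactor $\pi_0(x)$ does not accidentally rescue boundedness when $(\Re\l-\rho)$ vanishes on some wall but not identically — but if $(\Re\l-\rho)(x)=0$ for some $x$ in the closed chamber with $\a(x)>0$ for some $\a\in\Sigma_{\Re\l}^0$ wait, that cannot happen; rather, I would push $x$ to infinity strictly inside a face where the relevant linear functional is $0$, and there the polynomial factor genuinely grows, contradicting boundedness. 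This dichotomy is the crux of the argument.

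The \textbf{main obstacle} I anticipate is the careful bookkeeping in the necessity argument: one must show that for \emph{every} $w\in W$ one gets $w\Re\l\le\rho$, and to get the inequality on a given face one has to let $x$ tend to infinity inside that face (not merely in the open chamber), where the expansion of Theorem~\ref{thm:leading-termF} does not directly apply since it requires $x\in x_0+\overline{\frak a^+}$. The fix is to apply the theorem on the split-rank "subsystem" attached to the face — i.e.\ use that $\varphi_\l$ restricted to a face is governed by a hypergeometric function for the root subsystem $\Sigma_\Theta$ — or, more elementarily, to use the subadditivity Lemma~\ref{lemma:subadd-Schapira} to transfer information from the open chamber to its walls. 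Once that reduction is in hand, everything else is the two standard facts about $C(\rho)$ (its dominant points are $\{\nu\le\rho\}$, and $\mu\in C(\rho)\iff w\mu\le\rho\ \forall w$) plus the asymptotics already proved, and the sufficiency follows from $\varphi_\rho\equiv1$, the fundamental estimate $|\varphi_\l|\le\varphi_{\Re\l}$, and monotonicity of $e^{(\rho-\nu)(\cdot)}\varphi_\nu$ on dominant directions giving $|\varphi_\l(x)|\le\varphi_{\Re\l}(x)\le 1$.
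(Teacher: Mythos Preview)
Your necessity argument has a genuine gap: the implication ``boundedness of $\varphi_\l$, hence of $\varphi_{\Re\l}$'' is backwards, since $|\varphi_\l|\le\varphi_{\Re\l}$ only lets you conclude the \emph{opposite} direction. When $\Im\l_0\ne 0$ you cannot reduce to the real case; you must work with the complex parameter directly. The paper applies Theorem~\ref{thm:leading-termF} along a single ray $t x_1$ with $x_1\in\frak a^+$ and $(\Re\l_0-\rho)(x_1)>0$: if $\varphi_{\l_0}$ were bounded, one would get
\[
\lim_{t\to\infty}\Big(\tfrac{b_0(\l_0)}{\pi_0(\rho_0)}\,e^{it\Im\l_0(x_1)}+\!\!\sum_{w\in W_{\Re\l_0}\setminus W_{\l_0}}\!\!\tfrac{b_w(\l_0)\pi_{w,\l_0}(x_1)}{c_0\pi_0(x_1)}\,e^{itw\Im\l_0(x_1)}\Big)=0.
\]
The step you are missing is an almost-periodic-function argument: for distinct real $u_j$ one has $\limsup_t\big|\sum_j c_j e^{iu_jt}\big|^2\ge\lim_T T^{-1}\int_0^T\big|\sum_j c_j e^{iu_jt}\big|^2\,dt=\sum_j|c_j|^2$, so the limit above forces every coefficient to vanish, contradicting $b_0(\l_0)\ne 0$. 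This is exactly what rules out cancellation among the oscillatory terms coming from $W_{\Re\l_0}\setminus W_{\l_0}$. Incidentally, your ``main obstacle'' is not one: with $\Re\l_0$ dominant, (\ref{eq:charCrho}) says the single inequality $\Re\l_0\le\rho$ already characterizes membership in $C(\rho)$, so one ray inside the open chamber suffices and no face analysis is needed.

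Your sufficiency sketch also falls short of the sharp bound $\le 1$. Theorem~\ref{thm:real} does give boundedness of $\varphi_\nu$ for real $\nu\in C(\rho)$, but the monotonicity you invoke goes the wrong way: for $\rho-\nu$ and $\xi$ both dominant one has $K_\xi=\max_w(\rho-\nu)(w\xi)=(\rho-\nu)(\xi)$, and Schapira's gradient identity makes $t\mapsto e^{(\rho-\nu)(t\xi)}\varphi_\nu(t\xi)$ \emph{nondecreasing}, yielding only the lower bound $\varphi_\nu(x)\ge e^{(\nu-\rho)(x)}$. Likewise the subadditivity lemma at $x=0$ gives $\varphi_\nu(x_1)\le e^{\max_w(\nu-\rho)(wx_1)}$, whose exponent is $\ge 0$ whenever $\nu\ne\rho$. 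The paper's argument is different: fix $x$ and apply the maximum modulus principle to $\l\mapsto\varphi_\l(x)$ on $C(\rho)+iB_R$; the estimate $|\varphi_\l|\le\varphi_{\Re\l}$ pushes the maximum to the real boundary $\partial C(\rho)$, and iterating along one-dimensional complex slices pushes it to the extreme points $\{w\rho\}$, where $\varphi_{w\rho}\equiv 1$ by Lemma~\ref{lemma:Frho}.
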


\begin{proof}
First we show that $|\varphi_\l(x)| \leq 1$ if $\l \in C(\rho) + i
\frak{a}^*.$ For this, we apply maximum modulus principle to the
holomorphic function $\lambda \mapsto \varphi_\l(x)$ (for a fixed $x$).

Let $R > 0$ be arbitrary but fixed and let $B_R$ be the closed
ball of radius $R$ centered at the origin in $\frak{a}^*.$
Applying the maximum modulus principle along with $|\varphi_\l(x)|
\leq \varphi_{\Re \l}( x)$ in the domain $C(\rho) + i B_R$ implies
that the maximum of $|\varphi_\l(x)|$ is obtained when $\l$
belongs to the boundary of $C(\rho) \subset \frak{a}^*.$ Let
$\mu_1$ and $\mu_2$ be two distinct points on the boundary of
$C(\rho).$ Let $\overline{\mu_1 \mu_2}$ be the line segment in
$\frak{a}^*$ joining $\mu_1$ and $\mu_2$, and let $L_{\mu_1
\mu_2}^{\mathbb C}$ be the complex line passing through $\mu_1$
and $\mu_2,$ that is, $$ L_{\mu_1 \mu_2}^{\mathbb C} = \{ z \mu_1
+ (1-z) \mu_2:~z \in \mathbb C \}.$$ Consider the (closed) domain
$$P_\rho=(C(\rho) + i B_R) \cap \{ \lambda \in L_{\mu_1
\mu_2}^{\mathbb C}:~ \Re \lambda \in \overline{\mu_1 \mu_2} \}.$$
Now, a similar argument to the above shows that the maximum of
$\l\mapsto |\varphi_{\l}( x)|$ in $P_\rho$ is attained at either
$\mu_1$ or $\mu_2.$ It immediately follows that the maximum of
$|\varphi_\l(x)|$ in $C(\rho) + i \frak{a}^*$ is attained at the
extreme points of $C(\rho)$ which is just the set $\{ w\rho:~w \in
W \}.$ This completes the proof as $\varphi_{w\rho}(x) \equiv 1.$

We now prove that for $\lambda_0$ such that $\Re\l_0 \notin C(\rho)$
the function $\varphi_{\l_0}$ is not bounded. By $W$-invariance in the spectral
parameter, we can suppose that $\Re\l_0 \in \overline{(\frak
a^*)^+}$. We first recall that
\begin{equation}\label{eq:charCrho}
C(\rho) \cap \overline{(\frak a^*)^+} = \big(\rho-\overline{(\frak
a^*)^+}\big) \cap \overline{(\frak a^*)^+}
\end{equation}
(see \cite[Ch. IV, Lemma 8.3 (i)]{He2}).
If $\Re\l_0 \in
\overline{(\frak a^*)^+} \setminus C(\rho)$, we can therefore find $x_1
\in \frak a^+$ so that $(\Re\l_0-\rho)(x_1)>0$. Set
$d=|\Sigma_{\l_0}^0|\in \N_0$. If  $\varphi_{\l_0}$ is bounded, we have
\begin{equation} \label{eq:limitt}
\lim_{t\to +\infty} \varphi_{\l_0}(tx_1) e^{-t(\Re\l_0-\rho)(x_1)} t^{-d}=0\,.
\end{equation}
Recall that the polynomials $\pi_0$ and $\pi_{w,\l_0}$ appearing
in Theorem \ref{thm:leading-termF} are of degree $d$. Observe also
that $\pi_0(x_1)\neq 0$ as $x_1 \in \frak a^+$. Replacing $x$ by
$tx_1$ in (\ref{eq:restF-est}), we obtain as $t\to +\infty$
\begin{multline*} 
\Big| \frac{\varphi_{\l_0}(tx_1) e^{-t(\Re\l_0-\rho)(x_1)}}{t^d \pi_0(x_1)} -\Big( \frac{b_0(\l_0)}{\pi_0(\rho_0)} e^{it\Im\l_0(x_1)} \\+\sum_{w\in W_{\Re\l_0} \setminus W_{\l_0}} \frac{b_w(\l_0)\pi_{w,\l_0}(x_1)}{c_0\pi_0(x_1)}
e^{i tw\Im\l_0(x_1)} \Big)\Big| =o(t).
\end{multline*}
It follows that
\begin{equation} \label{eq:limit-contradiction}
\lim_{t\to +\infty} \Big( \frac{b_0(\l_0)}{\pi_0(\rho_0)} e^{it\Im\l_0(x_1)} +\sum_{w\in W_{\Re\l_0} \setminus W_{\l_0}} \! \! \frac{b_w(\l_0)\pi_{w,\l_0}(x_1)}{c_0\pi_0(x_1)}
e^{i tw\Im\l_0(x_1)} \Big)=0\,.
\end{equation}
We now proceed as in \cite[p. 147]{GV}.
If $u_1,\dots,u_p$ are distinct complex numbers and $c_1,\dots,c_p$ are complex constants, then
\begin{equation} \label{eq:GV}
\limsup_{t\to +\infty} \big| \sum_{j=1}^p c_j e^{iu_j t}\big|^2
\geq \lim_{T \to +\infty} \frac{1}{T}\, \int_0^T   \big| \sum_{j=1}^p c_j e^{iu_j t}\big|^2 \, dt
= \sum_{j=1}^p |c_j|^2
\end{equation}
So $\lim_{t\to +\infty} \big| \sum_{j=1}^p c_j e^{iu_j t}\big|^2=0$ implies $c_j=0$ for $j=1,\dots,p$.
Observe that if $w \in W_{\Re\l_0}\setminus W_{\l_0}$ then $w \notin W_{\Im\l_0}$. Since $x_1 \in \frak a^+$, this implies that $w\Im\l_0(x_1)\neq \Im\l_0(x_1)$ for all $w \in W_{\Re\l_0} \setminus W_{\l_0}$
Consequently, (\ref{eq:limit-contradiction}) contradicts that
$b_0(\l_0)\neq 0$. Thus $\varphi_{\l_0}$ cannot be bounded.
\end{proof}

\begin{Rem}
Some results towards Theorem \ref{thm:bddhyp} have been obtained in
\cite[Theorem 5.4 and Corollary 5.6]{Ro}. More precisely, the proof in
\cite{Ro} yields:
\begin{enumerate}
\thmlist \item $\varphi_\l$ is bounded if $\l$ belongs to the interior
of the tube $C(\rho)+i\mathfrak a^*$. (In \cite{Ro}, this is an
application of Schapira's sharp estimates proven in Theorem
\ref{thm:real} above.)
\item Suppose $\l \notin C(\rho)+i\mathfrak a^*$. If, moreover,
either $\l \in \mathfrak a^*$ or $\l \in \mathfrak a^*_\C \setminus \mathfrak
a^*$ is generic, then $\varphi_\l$ is not bounded. (This result is a
combination of Schapira's estimates on $\mathfrak a^*$ and the
classical Harish-Chandra series for generic $\l \in \fa_\C^*$.)
\end{enumerate}
Notice that (b) yields that $\varphi_\l$ is not bounded if $\l \notin C(\rho)+i\fa^*$
in the rank-one case.

Notice also that the original proof by Helgason and Johnson is based on a detailed study of
Harish-Chandra's integral formula for the spherical functions and of the boundary
components of the symmetric space $G/K$. These objects are missing in the general
theory of hypergeometric functions associated with root systems. So our proof
provides an alternative proof of the characterization of the bounded spherical functions
as well.
\end{Rem}

\section{$L^p$-Fourier analysis}

In this section we present some results towards a development of
the $L^p$-harmonic analysis for the hypergeometric Fourier
transform $\mathcal F$. We begin by considering the holomorphic
properties of $\mathcal F$ on $L^1(\fa,d\mu)^W$. This is a simple
application of Theorem \ref{thm:bddhyp}. A Riemann-Lebesgue lemma
is also obtained. We then study the properties of $\mathcal F$ on
$L^p(\fa,d\mu)^W$ with $1<p<2.$ We establish Hausdorff-Young
inequalities and as a consequence, using an argument from
\cite[pages 249--250]{FK}, we obtain injectivity and an
inversion formula for $\mathcal F$ on $L^p (\fa, d\mu)^W.$
Furthermore, we introduce the $L^p$-Schwartz space for $0<p \leq
2$ and prove an isomorphism theorem. Many of these results are
quite similar to those of the geometric case: the Hausdorff-Young
inequalities are proven as in \cite{EK82}; the $L^p$-Schwartz
space isomorphism is obtained using Anker's method
\cite{AnkerJFA}. A crucial ingredient is given by the following
estimates, due to Schapira (see \cite[Theorem 3.4 and Remark 3.2]{Schapira}): 
let $p \in S(\fa_\C^*)$ and $q \in S(\fa_\C)$ be
polynomials of degrees respectively $M$ and $N$. Then there is a
constant $C >0$ such that
\begin{equation}\label{der-estimates}
\left|\partial_\l(p) \partial_x(q)\varphi_\lambda(x)\right|\leq C
(1+|x|)^M (1+|\lambda|)^N \varphi_0(x)e^{\max_{w\in W}\Re w\lambda(x)}
\end{equation}
for all $\lambda \in \frakacs$ and $x \in \fa.$ In
(\ref{der-estimates}), we have written $\partial_y$ to indicate
that the differential operator acts on the variable $y$. These
estimates are obtained in \cite{Schapira} as a consequence of
similar estimates for the functions $G_\l$.

For $0<p\le 2$, set $\epsilon_p=\frac 2p-1$. Let
$C(\epsilon_p\rho)$ be the convex hull in $\mathfrak a^*$ of the set $\{\epsilon_p
w\rho: w\in W\}$, and let $\fa_{\epsilon_p}^*=C(\epsilon_p\rho)+ i \fa^*$.
In particular, for $p=1$, we have that $\fa_{\epsilon_1}^*=C(\rho)+i\fa^*$ is precisely the
set of parameters $\l$ for which $\varphi_\l$ is bounded.

\begin{Cor} \label{cor:holoLone}
Let $f\in L^1(\fa, d\mu)^W$. Then the following properties hold.
\begin{enumerate}
\thmlist
 \item
The hypergeometric Fourier transform $\widehat{f}(\l)$
 is well defined for all $\l \in \fa_{\epsilon_1}^*$,
and
\begin{equation} \label{eq:infty-one}
|\widehat{f}(\l)| \leq \|f\|_1\,, \qquad \l \in \fa_{\epsilon_1}^*\,.
\end{equation}
\item The function $\widehat{f}$ is continuous on
$\fa_{\epsilon_1}^*$ and holomorphic in its interior.
\item
\textup{(Riemann-Lebesgue lemma)} We have
$$ \lim_{\qquad{\l \in \fa^*_{\epsilon_1},|\Im\lambda|\rightarrow\infty}} |\widehat{f}(\lambda)|=0\,.$$
\end{enumerate}
\end{Cor}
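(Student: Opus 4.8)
The plan is to deduce all three statements from Theorem~\ref{thm:bddhyp} together with the Paley--Wiener theorem, the rest being dominated convergence. For part~(a), fix $\l\in\fa_{\epsilon_1}^*=C(\rho)+i\fa^*$. By Theorem~\ref{thm:bddhyp} we have $|\varphi_\l(x)|\le 1$ for all $x\in\fa$, so the integrand $f(x)\varphi_\l(x)$ in~(\ref{eq:hypFourier}) is dominated in absolute value by $|f(x)|\in L^1(\fa,d\mu)$. Hence the integral converges absolutely, $\widehat f(\l)$ is well defined, and
\[
|\widehat f(\l)|\le\int_\fa|f(x)|\,|\varphi_\l(x)|\,d\mu(x)\le\|f\|_1,
\]
which is~(\ref{eq:infty-one}).

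For part~(b) I would argue continuity by dominated convergence and holomorphy by Morera's theorem. Recall that for each fixed $x\in\fa$ the function $\l\mapsto\varphi_\l(x)$ is entire on $\frakacs$ (as used in the proof of Theorem~\ref{thm:bddhyp}). If $\l_n\to\l$ inside $\fa_{\epsilon_1}^*$, then $f(x)\varphi_{\l_n}(x)\to f(x)\varphi_\l(x)$ pointwise with common dominating function $|f|$, so $\widehat f(\l_n)\to\widehat f(\l)$; thus $\widehat f$ is continuous on $\fa_{\epsilon_1}^*$. Its interior is $\bigl(\mathrm{int}\,C(\rho)\bigr)+i\fa^*$, and on it $\widehat f$ is bounded by~(\ref{eq:infty-one}); so by Hartogs' theorem it suffices to check separate holomorphy. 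For any closed rectangle $\partial R$ lying on a complex line contained in the interior, the uniform bound $|f(x)\varphi_\l(x)|\le|f(x)|$ for $(\l,x)\in\partial R\times\fa$ legitimizes Fubini's theorem, giving
\[
\oint_{\partial R}\widehat f(\l)\,d\l=\int_\fa f(x)\Bigl(\oint_{\partial R}\varphi_\l(x)\,d\l\Bigr)d\mu(x)=0,
\]
since $\l\mapsto\varphi_\l(x)$ is entire. Morera's theorem then yields holomorphy in each variable, hence on the interior of the tube.

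Part~(c) is the only step requiring more than a pointwise estimate, and is the main (though modest) obstacle; it is settled by the classical density reduction. First, for $g\in C_c^\infty(\fa)^W$ the Paley--Wiener theorem shows that $\widehat g$ lies in some $PW_{B_R}(\frakacs)^W$, so by~(\ref{eq:PWGamma}) with $\Gamma=B_R$ (so that $q_{B_R}(\l)=R|\l|$), for each $N\in\N_0$ there is $C_N$ with $|\widehat g(\l)|\le C_N(1+|\l|)^{-N}e^{R|\Re\l|}$ on $\frakacs$. On $\fa_{\epsilon_1}^*$ the real part $\Re\l$ ranges over the compact set $C(\rho)$, so $e^{R|\Re\l|}$ is bounded there, and taking $N=1$ gives $|\widehat g(\l)|\to0$ as $|\Im\l|\to\infty$ within $\fa_{\epsilon_1}^*$ (because then $|\l|\to\infty$). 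Now let $f\in L^1(\fa,d\mu)^W$ and $\varepsilon>0$ be arbitrary. Since $C_c^\infty(\fa)^W$ is dense in $L^1(\fa,d\mu)^W$ (approximate $f$ in $L^1(\fa,d\mu)$ by a compactly supported smooth function and average its $W$-translates, using that $d\mu$ is $W$-invariant), pick such a $g$ with $\|f-g\|_1<\varepsilon$. Applying~(\ref{eq:infty-one}) to $f-g$,
\[
|\widehat f(\l)|\le|\widehat{f-g}(\l)|+|\widehat g(\l)|\le\|f-g\|_1+|\widehat g(\l)|<\varepsilon+|\widehat g(\l)|,
\]
and the right-hand side is $<2\varepsilon$ once $|\Im\l|$ is large enough with $\l\in\fa_{\epsilon_1}^*$. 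Letting $\varepsilon\to0$ proves the Riemann--Lebesgue statement. In sum, the only substantive input is the Paley--Wiener theorem on a dense subspace, used throughout in conjunction with the fundamental bound $|\varphi_\l|\le 1$ of Theorem~\ref{thm:bddhyp}.
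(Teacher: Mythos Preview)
Your proof is correct and follows essentially the same approach as the paper: parts (a) and (b) are derived from Theorem~\ref{thm:bddhyp} together with the holomorphy of $\l\mapsto\varphi_\l(x)$ and Morera's theorem, and part (c) is obtained by approximating in $L^1$ by elements of $C_c^\infty(\fa)^W$ and invoking the Paley--Wiener theorem. You have simply spelled out the dominated-convergence, Fubini, and Hartogs steps that the paper leaves implicit.
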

\begin{proof}
The first two properties are immediate consequences of Theorem
\ref{thm:bddhyp}, the fact that $\varphi_\lambda$ is holomorphic in
$\l$, and Morera's theorem. The Riemann-Lebesgue lemma follows by
approximating an arbitrary function in $L^1(\frak a, d\mu)^W$ by
$W$-invariant compactly supported smooth functions and the
Paley-Wiener theorem.
\end{proof}

Next we discuss some properties of $\mathcal F$ on $L^p(\fa,d\mu)^W.$

\begin{Lemma}
Let $f\in L^p(\fa, d\mu)^W$. Then the following properties hold.
\begin{enumerate}
\thmlist
 \item
The hypergeometric Fourier transform $\widehat{f}(\l)$
 is well defined for all $\l $ in the interior of $\fa_{\epsilon_p}^*$.
It defines a holomorphic function in the interior of
$\fa_{\epsilon_p}^*$.

\item \label{eq:Hausdorff-Young} \textup{(Hausdorff-Young: real
case)} Let $p,q$ be so that $1<p<2$ and $1/p+1/q=1$. Then there is
a constant $C_p >0$ so that
$\|\widehat{f}\|_q:=\left(\int_{i\fa^*} |\widehat{f}(\l)|^q
|c(\l)|^{-2}\; d\l\right)^{1/q} \leq C_p \|f\|_p$.
\end{enumerate}
\end{Lemma}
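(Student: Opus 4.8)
The plan is to prove the two parts of the Lemma by interpolation, following the classical strategy for the Hausdorff--Young inequality on symmetric spaces as in \cite{EK82}.

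\textbf{Part (a).} First I would establish the pointwise bound needed to make sense of $\widehat f(\l)$ for $f \in L^p(\fa,d\mu)^W$ and $\l$ in the interior of $\fa_{\epsilon_p}^*$. Write $\l = \sigma + i\tau$ with $\sigma \in \operatorname{int}C(\epsilon_p\rho)$ and $\tau \in \fa^*$. By $W$-invariance of $\varphi_\l$ in the spectral parameter we may assume $\sigma \in \overline{(\fa^*)^+}$, and since $\sigma$ lies in the interior of $C(\epsilon_p\rho)$ we have (using \eqref{eq:charCrho} rescaled by $\epsilon_p$, or directly the definition of the convex hull) that $\sigma \le \epsilon_p\rho$ in the partial order, hence $\max_{w\in W}\Re(w\l)(x) = \max_{w\in W}(w\sigma)(x) \le \epsilon_p\rho(x^+)$. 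Combining $|\varphi_\l(x)| \le \varphi_{\Re\l}(x)$ with Schapira's estimate \eqref{der-estimates} (with $p=q=1$) gives $|\varphi_\l(x)| \le C\,\varphi_0(x) e^{\epsilon_p\rho(x^+)}$. Now Theorem \ref{thm:real} applied with $\l_0 = 0$ (so $\Sigma_0^0 = \Pi$, and $\rho_0 = \sum_{\a\in\Sigma^+_0}\a$) yields $\varphi_0(x) \asymp \big[\prod_{\a\in\Sigma_0^+}(1+\a(x))\big] e^{-\rho(x^+)}$ on $\overline{\fa^+}$, so $\varphi_0(x) e^{\epsilon_p\rho(x^+)} \le C'(1+|x|)^{|\Sigma_0^+|} e^{-(1-\epsilon_p)\rho(x^+)} = C'(1+|x|)^{|\Sigma_0^+|} e^{-(2/p')\rho(x^+)}$ where $p'$ is the conjugate exponent. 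The measure $d\mu(x)$ grows like $e^{2\rho(x^+)}$ for $x$ large, so $\varphi_{\Re\l}(\cdot)^{p'} \in L^1(\fa,d\mu)$ when $\Re\l$ is strictly inside the tube; by H\"older's inequality the integral defining $\widehat f(\l)$ converges absolutely and is locally bounded in $\l$. Holomorphy then follows from Morera's theorem together with dominated convergence, exactly as in Corollary \ref{cor:holoLone}(b).

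\textbf{Part (b), the Hausdorff--Young inequality.} This I would obtain by Stein's complex interpolation. The two endpoints are: the Plancherel theorem (the $L^2$--$L^2$ case, giving $\|\widehat f\|_2 = \|f\|_2$ up to normalization), and the trivial $L^1$--$L^\infty$ bound $\|\widehat f\|_\infty \le \|f\|_1$ on $i\fa^*$, which holds because $|\varphi_{i\tau}(x)| \le \varphi_0(x) \le 1$ for $\tau\in\fa^*$ (here one uses $\varphi_0 \le 1$, which follows since $0 \in C(\rho)$ by Theorem \ref{thm:bddhyp}). Interpolating between these endpoints with the Riesz--Thorin theorem applied to the operator $\mathcal F$ between $L^p(\fa,d\mu)^W$ and $L^q(i\fa^*,|c(\l)|^{-2}d\l)^W$ gives the stated inequality $\|\widehat f\|_q \le C_p\|f\|_p$ for $1<p<2$, $1/p+1/q=1$. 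One subtlety: the right-hand measure $|c(\l)|^{-2}d\l$ is fixed on the spectral side and the operator $\mathcal F$ genuinely maps $L^1 \to L^\infty(i\fa^*)$ and $L^2 \to L^2(i\fa^*, |c|^{-2}d\l)$, so Riesz--Thorin applies directly on the same pair of spaces with a single interpolation parameter; this is the point where following \cite{EK82} is cleanest.

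\textbf{Expected main obstacle.} The delicate point is Part (a): verifying carefully that when $\Re\l$ is in the \emph{interior} of $C(\epsilon_p\rho)$ the function $\varphi_{\Re\l}$ lies in $L^{p'}(\fa,d\mu)$, i.e. matching the exponential decay rate $e^{-(2/p')\rho(x^+)}$ coming from Theorem \ref{thm:real} against the volume growth $e^{2\rho(x^+)}$ of $d\mu$, including correctly tracking the polynomial factors $\prod_\a(1+\a(x))$ near the walls and the behavior on all of $\overline{\fa^+}$ (not just far from the walls). For $p'>2$, i.e. $p<2$, the decay is strictly faster than the square-root of the volume growth, so the integral converges with room to spare; but one must confirm the estimate is uniform for $\Re\l$ in compact subsets of the open tube to get local boundedness and hence holomorphy. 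Once this integrability is in hand, the interpolation step in Part (b) is routine. I would also need the elementary fact that $\epsilon_p\rho(x^+) \le \epsilon_p\rho(x^+)$ is the correct value of $\max_{w}\Re(w\l)(x)$ when $\Re\l$ is $W$-conjugated into the closed positive chamber and dominated by $\epsilon_p\rho$, which is immediate from the definition of the order $\le$ and the fact that $\rho - w\rho$ is a nonnegative combination of simple roots.
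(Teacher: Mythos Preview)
Your proposal is correct and follows essentially the same approach as the paper: Part (a) via Schapira's estimates \eqref{der-estimates} combined with H\"older and Morera, and Part (b) via Riesz--Thorin interpolation between the $(1,\infty)$ bound \eqref{eq:infty-one} and the $(2,2)$ Plancherel theorem. Two cosmetic remarks: you initially name Stein interpolation but correctly apply Riesz--Thorin (a single operator, not an analytic family), and when $\l_0=0$ the set $\Sigma_{\l_0}^0$ equals all of $\Sigma_0^+$, not just $\Pi$; neither point affects your argument.
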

\begin{proof}
The first part is an immediate application of the estimates (\ref{der-estimates}). The proof of the second part
can be obtained by following the methods used in \cite[ Lemma 8]{EK82}. 
More precisely, it is an application
to the Riesz-Thorin interpolation theorem to the operator $\mathcal F$, which is of type $(2,2)$ by the Plancherel
theorem and of type $(1,\infty)$ by (\ref{eq:infty-one}).
\end{proof}

The Hausdorff--Young inequality above can be extended as in
\cite{EK82}. As the proofs are very similar we only give a sketch.
Recall that $\Sigma_0$ is the set of elements in $\Sigma$ which
are not integral multiples of other elements in $\Sigma.$ For
$\alpha \in \Sigma_0$ define, $a(\alpha) = m_\alpha +
m_{2\alpha}.$
\begin{Lemma}
Let $f \in L^p(\fa, d\mu)^W, 1 < p < 2$ and $\eta$ be in the
interior of $C(\epsilon_p \rho).$ Then the following properties
hold.
\begin{enumerate}
\thmlist \item \textup{(Hausdorff-Young: complex case)} Let $p, q$
be such that $\frac{1}{p} + \frac{1}{q} = 1. $ Then there is a
constant $C_{p, \eta}$ such that for all $f \in L^p(\fa, d\mu)^W$
we have
$$ \left ( \int_{i\fa^*}|\widehat{f}(\l + \eta)|^q~|c(\l)|^{-2}
~d\l \right )^{1/q} \leq C_{p, \eta} \|f \|_p.$$

\item We have, $$\sup_{\l \in i\fa^*} |\widehat{f}(\l + \eta)| \leq
C_{p, \eta} \|f\|_p .$$

\item We have, $$ \lim_{\qquad{\l \in
\fa^*_{\epsilon_p},|\Im\lambda|\rightarrow\infty}}
|\widehat{f}(\lambda)|=0 .$$
\end{enumerate}
\end{Lemma}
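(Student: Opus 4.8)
The plan is to follow \cite{EK82}, extending the real-case Hausdorff--Young inequality proved above: part (b) is proved directly, part (a) by complex interpolation, and part (c) is deduced from (b). For part (b), since $\eta$ lies in the interior of $C(\epsilon_p\rho)\subseteq C(\rho)$, Schapira's estimate gives $|\varphi_{\l+\eta}(x)|\le\varphi_{\Re(\l+\eta)}(x)=\varphi_\eta(x)$ for all $\l\in i\fa^*$, whence by H\"older's inequality
\[
|\widehat f(\l+\eta)|\le\int_\fa|f(x)|\,\varphi_\eta(x)\,d\mu(x)\le\|f\|_p\,\|\varphi_\eta\|_{L^q(\fa,d\mu)}\,,\qquad \tfrac1p+\tfrac1q=1\,.
\]
Thus (b) holds with $C_{p,\eta}=\|\varphi_\eta\|_{L^q(\fa,d\mu)}$, provided this is finite. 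To check finiteness one uses $W$-invariance to assume $\eta\in\overline{(\fa^*)^+}$; by Theorem \ref{thm:real}, $\varphi_\eta(x)\asymp\big[\prod_{\a\in\Sigma_\eta^0}(1+\a(x))\big]e^{(\eta-\rho)(x)}$ on $\overline{\fa^+}$, and by (\ref{eq:rhom})--(\ref{eq:mu}) one has $\mu(x)\le e^{2\rho(x)}$ there, so $\int_\fa\varphi_\eta^q\,d\mu$ is, up to a constant and a polynomial factor, $\int_{\fa^+}e^{(q\eta-(q-2)\rho)(x)}\,dx$; since $\epsilon_p=(q-2)/q$, the characterization (\ref{eq:charCrho}) shows this last integral converges exactly when $\eta$ lies in the interior of $C(\epsilon_p\rho)$. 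The constant $C_{p,\eta}$ so obtained is bounded on compact subsets of the interior of $C(\epsilon_p\rho)$.

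For part (a), the plan is Stein's interpolation theorem for an analytic family of operators on the strip $0\le\Re z\le1$. Write $\eta=\epsilon_p\eta_1$ with $\eta_1$ in the interior of $C(\rho)$, put $\theta=2/q$ (so $\epsilon_p=1-\theta$), and for $f\in C_c^\infty(\fa)^W$ set $T_zf(\l):=\widehat f\big(\l+(1-z)\eta_1\big)$ for $\l\in i\fa^*$; then $z\mapsto T_zf$ is holomorphic and $T_\theta f=\widehat f(\cdot+\eta)$. On the line $\Re z=0$ the real part of the spectral argument is $\eta_1\in C(\rho)$, so as in (b) one gets $\|T_{is}f\|_{L^\infty(i\fa^*)}\le\|f\|_{L^1(\fa,d\mu)}$ uniformly in $s$ (via Theorem \ref{thm:bddhyp}); on the line $\Re z=1$ the spectral argument is translated only by the purely imaginary amount $-is\eta_1$, and the Plancherel theorem supplies an estimate $\|T_{1+is}f\|_{L^2(i\fa^*,|c(\l)|^{-2}d\l)}\le M(s)\,\|f\|_{L^2(\fa,d\mu)}$. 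Since the complex interpolation space at parameter $\theta$ of $(L^1(\fa,d\mu),L^2(\fa,d\mu))$ is $L^p(\fa,d\mu)$ and that of $(L^\infty(i\fa^*),L^2(i\fa^*,|c(\l)|^{-2}d\l))$ is $L^q(i\fa^*,|c(\l)|^{-2}d\l)$, interpolation gives the inequality for $f\in C_c^\infty(\fa)^W$, hence for all of $L^p(\fa,d\mu)^W$ by density. The delicate step --- the main obstacle --- is the $\Re z=1$ endpoint: the Plancherel density $|c(\l)|^{-2}$ vanishes along the root hyperplanes of $i\fa^*$, so a translation within $i\fa^*$ is not controlled by Plancherel directly, and $M(s)$, with growth admissible for Stein interpolation, must be obtained more carefully. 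This can be done as in \cite{EK82}, for instance by shifting the integration contour --- using that on $i\fa^*$ the density $|c(\l)|^{-2}$ coincides with $(c(\l)c(-\l))^{-1}$, which continues meromorphically --- and/or by inserting into $T_z$ an analytic factor built from the $c$-function that leaves $T_\theta$ unchanged.

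Finally, for part (c) the plan is a density argument. Choose $f_k\in C_c^\infty(\fa)^W$ with $f_k\to f$ in $L^p(\fa,d\mu)$. For each $k$, the Paley--Wiener theorem (Theorem \ref{PWheorem}) shows $\widehat{f_k}$ extends to an entire function with $|\widehat{f_k}(\l)|\le C_{k,N}(1+|\l|)^{-N}e^{R_k|\Re\l|}$, where $\supp f_k\subseteq B_{R_k}$; as $\Re\l$ stays in the compact set $C(\epsilon_p\rho)$ on $\fa_{\epsilon_p}^*$, this forces $\widehat{f_k}(\l)\to0$ when $|\Im\l|\to\infty$. By part (b), with uniformity over compact subsets $K$ of the interior of $C(\epsilon_p\rho)$, one has $\sup_{\Re\l\in K}|\widehat f(\l)-\widehat{f_k}(\l)|\le C_K\|f-f_k\|_p\to0$; hence $\widehat f$ is, uniformly on such slices, a limit of functions vanishing at infinity in the imaginary directions, which is (c).
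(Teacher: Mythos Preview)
Your proposal follows the same overall strategy as the paper --- analytic (Stein) interpolation in the spirit of \cite{EK82} for part~(a), and a density argument for part~(c) --- and your direct argument for part~(b) via H\"older and the sharp estimate $\varphi_\eta\in L^q(\fa,d\mu)$ is a clean alternative that the paper does not spell out.

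The substantive gap is in part~(a). You correctly identify the obstacle: your naive family $T_zf(\l)=\widehat f(\l+(1-z)\eta_1)$ fails at the Plancherel endpoint because $|c(\l)|^{-2}\,d\l$ is not translation-invariant (it vanishes along the root hyperplanes), so there is no uniform bound $\|T_{1+is}f\|_{L^2(|c|^{-2}d\l)}\le M(s)\|f\|_2$. But you do not actually resolve this; you only gesture at ``shifting the contour'' or ``inserting an analytic factor that leaves $T_\theta$ unchanged,'' neither of which works as stated. The paper's fix is concrete and is really the content of the proof: one replaces the target measure by $d\nu(\l)=\prod_{\alpha\in\Sigma_0}(1+|\langle\l,\alpha\rangle|)^{a(\alpha)}\,d\l$ (which does not vanish) and builds the $z$-dependent correction into the operator,
\[
Y_zf(\l)=\widehat f\Big(\tfrac{z\eta}{|\eta|}+\l\Big)\prod_{\alpha\in\Sigma_0}(1+|\langle\l,\alpha\rangle|)^{-1}\Big\langle\tfrac{z\eta}{|\eta|}+\l,\alpha\Big\rangle,
\]
on the strip $0\le\Re z\le|\eta|/\epsilon_p$. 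At $\Re z=0$ this is type $(2,2)$ because the inserted factor, together with $d\nu$, reproduces the Plancherel density $|c(\l)|^{-2}\asymp\prod_\alpha|\langle\l,\alpha\rangle|^2(1+|\langle\l,\alpha\rangle|)^{a(\alpha)-2}$ after the shift; at $\Re z=|\eta|/\epsilon_p$ it is type $(1,\infty)$ by Theorem~\ref{thm:bddhyp}. Interpolating at $z=|\eta|$ and comparing $d\nu$ with $|c(\l)|^{-2}d\l$ gives~(a). In short, your plan is right, but the admissible analytic family that makes both endpoints work has to be written down, and yours is not it.
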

\begin{proof}
To prove (a) we follow ideas from \cite{EK82}. First, we note
that, as in \cite[Lemma 5]{EK82} we have
\begin{equation}
|c(\l)|^{-2} \asymp \Pi_{\alpha \in \Sigma_0} |\langle \l,
\alpha \rangle |^2 (1 + |\langle \l, \alpha \rangle|)^{a(\alpha)
-2}\,.
\end{equation}
Next we define an
admissible family of analytic operators as follows:

For $z \in \mathbb C$ such that $ 0 \leq \textup{Re}z \leq
\frac{|\eta|}{\epsilon_p},$ let $Y_z(f)$ be the function defined
on the measure space $(i\fa^*, d\nu)$ where $d\nu(\l) =
\Pi_{\alpha \in \Sigma_0} (1 + |\langle \lambda, \alpha \rangle
|)^{a(\alpha)} d\l$ by
\begin{equation} Y_z(f) (\l) = \widehat{f}\left( z \eta/|\eta| +
\l \right )~\Pi_{\alpha \in \Sigma_0} (1 + \langle \l, \alpha
\rangle |)^{-1}~\langle z \eta/|\eta| + \l, \alpha \rangle.
\end{equation} Proceeding as in \cite{EK82}, we see that $Y_z$ is
of type $(2, 2)$ when $\textup{Re}z = 0$ and type $(1, \infty)$
when $\textup{Re}z = \frac{|\eta|}{\epsilon_p}$ with admissible
bounds. Analytic interpolation then proves (a). The next two
results are established exactly as in \cite{EK82}. We omit the
details.
\end{proof}

Now we turn to the question of injectivity and an inversion
formula for $\mathcal F.$

\begin{Thm}\label{thm:inversion}
Let $f \in L^p (\fa, d\mu)^W, 1 \leq p \leq 2.$ Then $\widehat{f}
\equiv 0$ implies that $f = 0$ almost everywhere. Moreover, if $f
\in L^p(\fa, d\mu)^W $ and $\widehat{f} \in L^1(\fa^*,
|c(\l)|^{-2}d\l)$ then
$$f(x) =
\int_{i\fa^*}~\widehat{f}(\lambda)~\varphi_{-\l}(x)~|c(\l)|^{-2}~d\l \quad
{\textup{almost everywhere}}.$$
\end{Thm}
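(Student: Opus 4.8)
The plan is to follow the standard strategy for deducing $L^p$-injectivity and an $L^1$-inversion formula from the Hausdorff--Young inequalities, as carried out in the Riemannian setting in \cite[pages 249--250]{FK}. First I would reduce matters to $1 \le p \le 2$ and use that the case $p=2$ is already contained in the Plancherel theorem (part (b) of the $L^2$-theorem quoted in the excerpt); for $p=1$ the transform $\widehat f$ extends continuously to the tube $C(\rho)+i\fa^*$ by Corollary \ref{cor:holoLone}. The main point is therefore the range $1<p<2$. For injectivity, suppose $f \in L^p(\fa,d\mu)^W$ with $\widehat f \equiv 0$ on $i\fa^*$. Fix $\psi \in C_c^\infty(\fa)^W$ and form the convolution-type object $f \ast \psi$ in the hypergeometric setting (using the Opdam--Cherednik translation, or equivalently working with $\mathcal F(f\ast\psi) = \widehat f \cdot \widehat\psi$, which vanishes). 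Since $\widehat\psi \in PW(\frakacs)^W$ decays rapidly, $f\ast\psi$ will lie in $L^1(\fa,d\mu)^W \cap L^2(\fa,d\mu)^W$ with $\mathcal F(f\ast\psi)\equiv 0$, hence $f\ast\psi = 0$ a.e.\ by the $L^2$-Plancherel theorem; letting $\psi$ run through an approximate identity forces $f=0$ a.e. The holomorphy part of the Hausdorff--Young lemma (the transform extends holomorphically to the interior of $\fa^*_{\epsilon_p}$) is what guarantees that $\widehat f$ makes sense and that the convolution manipulations are justified.

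For the inversion formula, assume in addition $\widehat f \in L^1(i\fa^*, |c(\l)|^{-2}\,d\l)$ and set
\begin{equation*}
g(x) = \int_{i\fa^*} \widehat f(\l)\,\varphi_{-\l}(x)\,|c(\l)|^{-2}\,d\l\,.
\end{equation*}
By Theorem \ref{thm:bddhyp}, $|\varphi_{-\l}(x)| \le 1$ for $\l \in i\fa^*$, so the integral converges absolutely and $g$ is a bounded continuous $W$-invariant function on $\fa$. The goal is $g = f$ a.e. I would test against an arbitrary $\psi \in C_c^\infty(\fa)^W$: using Fubini (legitimate because $\widehat f \in L^1(|c|^{-2}d\l)$ and $\psi$ has compact support), together with the inversion formula for $\psi$ (part (c) of the $L^2$-theorem) and the Plancherel identity, one obtains
\begin{equation*}
\int_\fa g(x)\,\overline{\psi(x)}\,d\mu(x) = \int_{i\fa^*} \widehat f(\l)\,\overline{\widehat\psi(\l)}\,|c(\l)|^{-2}\,d\l = \int_\fa f(x)\,\overline{\psi(x)}\,d\mu(x)\,,
\end{equation*}
where the last equality uses the Hausdorff--Young pairing: $\widehat f \in L^q(i\fa^*,|c|^{-2}d\l)$ (with $1/p+1/q=1$) by the real Hausdorff--Young inequality, $\widehat\psi \in L^{q'}$ for every exponent, and the bilinear form $\langle f, \psi\rangle = \langle \widehat f, \widehat\psi\rangle$ extends from $C_c^\infty$ by density and the $L^p$--$L^q$ bound on $\mathcal F$. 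Since $\psi$ is arbitrary and both $f$ and $g$ are locally integrable against $d\mu$, this yields $g=f$ a.e.

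The step I expect to be the main obstacle is justifying the duality identity $\int f\bar\psi\,d\mu = \int \widehat f\,\overline{\widehat\psi}\,|c|^{-2}\,d\l$ for $f \in L^p$ — i.e.\ making rigorous that the hypergeometric Fourier transform, a priori only an $L^p$-bounded operator into $L^q(|c|^{-2}d\l)$, is the genuine adjoint of the inverse transform restricted to Paley--Wiener functions. This requires a careful density argument: approximating $f$ in $L^p$-norm by functions in $C_c^\infty(\fa)^W$, invoking the Hausdorff--Young bound to pass the approximation through $\mathcal F$ in $L^q$-norm, and using the $L^2$-Plancherel identity on the dense subspace; one must also check that no $c$-function singularities obstruct the pairing, which is where the estimate $|c(\l)|^{-2} \asymp \prod_{\alpha\in\Sigma_0}|\langle\l,\alpha\rangle|^2(1+|\langle\l,\alpha\rangle|)^{a(\alpha)-2}$ and the boundedness of the $\varphi_{-\l}$ on $i\fa^*$ are used. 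Everything else — the convolution estimates, Fubini, and the appeal to the already-established $L^2$-theory and to Theorem \ref{thm:bddhyp} — is routine once this identity is in place.
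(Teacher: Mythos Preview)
Your approach to the inversion formula is essentially the same as the paper's: both test against an arbitrary $\psi \in C_c^\infty(\fa)^W$ and reduce everything to the duality identity
\[
\int_\fa f\,\overline\psi\,d\mu \;=\; \int_{i\fa^*} \widehat f\,\overline{\widehat\psi}\,|c(\l)|^{-2}\,d\l,
\]
which you correctly isolate as the main point and justify by density of $C_c^\infty(\fa)^W$ in $L^p$, the Plancherel identity on this dense subspace, and the Hausdorff--Young bound to pass to the limit.

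Your injectivity argument, however, has a genuine gap. You propose to form a convolution $f*\psi$ in the hypergeometric setting and argue via $\mathcal F(f*\psi)=\widehat f\,\widehat\psi$, then use an approximate identity. But for arbitrary positive multiplicities there is no established convolution on $L^p(\fa,d\mu)^W$ with the properties you need (that $f*\psi\in L^1\cap L^2$, that the product formula holds, that approximate identities converge in $L^p$). The paper itself remarks in its closing paragraph that finding a positive convolution structure is an open problem outside rank one and certain $BC$-type cases. The ``Opdam--Cherednik translation'' you invoke exists at the level of differential-reflection operators, but its $L^p$-mapping and positivity properties are not available in the generality required here.

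The fix is immediate and is already implicit in your second part: the duality identity you establish for the inversion formula \emph{is} the injectivity argument. Once $\int f\,\overline\psi\,d\mu=\int \widehat f\,\overline{\widehat\psi}\,|c|^{-2}\,d\l$ holds for every $\psi\in C_c^\infty(\fa)^W$ and every $f\in L^p$, the hypothesis $\widehat f\equiv 0$ gives $\int f\,\overline\psi\,d\mu=0$ for all such $\psi$, hence $f=0$ a.e. This is precisely the paper's route; no convolution is needed.
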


\begin{proof}
The first part of the proof follows the argument used in \cite[Theorem 3.2]{FK}.
Fix $g \in C_c^\infty(\fa)^W.$ Consider the linear functionals
$T_g$ and $\widehat{T}_g$ on $L^p(\fa, d\mu)^W$ defined by $$T_g(h) =
\int_{\fa}~h(x) \overline{g(x)}~d\mu(x),$$ $$\widehat{T}_g(h) =
\int_{i\fa^*}~\widehat{h}(\lambda)~\overline{\widehat{g}(\lambda)}~|c(\lambda)|^{-2}~d\lambda.$$
 By the Plancherel theorem, $T_g = \widehat{T}_g$ on $L^1(\mathfrak a, d\mu)^W \cap L^2(\mathfrak a, d\mu)^W$, which is a
dense subspace of $L^p(\mathfrak a, d\mu)^W$ for $1 \leq p \leq 2.$ By H\"older's inequality we have $|T_g(h)| \leq \|h\|_p \|g\|_q$ if $\frac{1}{p} +
\frac{1}{q} = 1.$ Again, by the Hausdorff-Young inequality along
with Paley-Wiener estimates for $\widehat{g}(\lambda)$, we have
$$|\widehat{T}_g(h)| \leq \|\widehat{g}\|_p \|\widehat{h}\|_q \leq C_p \|\widehat{g}\|_p \|h\|_p.$$
Hence, both $T_g$ and $\widehat{T}_g$ are continuous linear
functionals on $L^p(\fa, d\mu)^W$ which agree on a dense subspace.
It follows that they agree on all of $L^p(\mathfrak a, d\mu)^W.$ Now, if $f \in L^p(\mathfrak a, d\mu)^W$
and $\widehat{f} \equiv 0,$ then $\widehat{T}_g(f) = T_g(f) = 0.$ So we
have,
$$\int_{\fa}f(x)~\overline{g(x)}~d\mu(x) = 0$$  for all $g \in
C_c^\infty(\fa)^W,$ which implies that $f$ vanishes almost
everywhere.

To prove the inversion formula, notice that if
$\widehat{f} \in L^1( \fa^*, |c(\l)|^{-2}d\l)^W,$ then by the Fubini's
theorem and $\overline{\varphi_\l (x)} = \varphi_{-\l}(x)$ for $\l \in
i\fa^*,$ we have
$$\widehat{T}_g(f) = \int_{\fa}~\overline{g(x)}~\left(
\int_{i\fa^*}~\widehat{f}(\lambda)
\varphi_{-\l}(x)~|c(\lambda)|^{-2}~d\lambda \right)~d\mu(x).$$
Since $T_g(f) = \widehat{T}_g(f)$ and $g$ is arbitrary we get the
result.
\end{proof}

We record the following equality obtained in the proof of Theorem \ref{thm:inversion}.

\begin{Cor}
Let $1\leq p \leq 2$. Suppose that $f \in L^p(\mathfrak a,d\mu)^W$ and $g \in C^\infty_c(\mathfrak a)^W$. Then
$$\int_{\mathfrak a} f(x)\overline{g(x)} \, dx=
\int_{i\mathfrak a^*} \widehat{f}(\l) \overline{\widehat{g}(\l)} \, |c(\l)|^{-2} \, d\l\,.$$
\end{Cor}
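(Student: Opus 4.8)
The identity recorded here is precisely the equality $T_g(f)=\widehat{T}_g(f)$ that was established, for an arbitrary $f\in L^p(\fa,d\mu)^W$, in the course of the proof of Theorem \ref{thm:inversion}; the plan is simply to isolate and recall that piece of the argument. Fix $g\in C_c^\infty(\fa)^W$ and introduce the two linear functionals
$$T_g(h)=\int_{\fa}h(x)\,\overline{g(x)}\,d\mu(x),\qquad
\widehat{T}_g(h)=\int_{i\fa^*}\widehat{h}(\l)\,\overline{\widehat{g}(\l)}\,|c(\l)|^{-2}\,d\l$$
on $L^p(\fa,d\mu)^W$, where $1/p+1/q=1$. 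The strategy is to show that both functionals are continuous, that they agree on a dense subspace, and hence that they agree at $h=f$.

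Continuity of $T_g$ is immediate from H\"older's inequality, since $g$ is bounded with compact support and hence $\|g\|_{L^q(\fa,d\mu)}<\infty$. For $\widehat{T}_g$ I would apply H\"older's inequality in the variable $\l\in i\fa^*$, combining two inputs: on one factor, the Hausdorff--Young inequality $\|\widehat{h}\|_{L^q(i\fa^*,|c(\l)|^{-2}d\l)}\le C_p\|h\|_p$ (for $1<p<2$ this is the Hausdorff--Young inequality proved above; for $p=2$ it is the Plancherel theorem, and for $p=1$ it is the estimate $|\widehat{h}(\l)|\le\|h\|_1$ of (\ref{eq:infty-one})); and on the other factor, the membership $\widehat{g}\in L^p(i\fa^*,|c(\l)|^{-2}d\l)^W$. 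The latter follows from the Paley--Wiener theorem, which yields $\widehat{g}\in PW(\fa_\C^*)^W$ and hence that $\widehat{g}$ decays on $i\fa^*$ faster than any power of $(1+|\l|)$, together with the polynomial growth of $|c(\l)|^{-2}$ on $i\fa^*$, controlled by $|c(\l)|^{-2}\asymp\prod_{\a\in\Sigma_0}|\inner{\l}{\a}|^2(1+|\inner{\l}{\a}|)^{a(\a)-2}$. This gives $|\widehat{T}_g(h)|\le\|\widehat{g}\|_{L^p(|c|^{-2}d\l)}\,\|\widehat{h}\|_{L^q(|c|^{-2}d\l)}\le C_p\,\|\widehat{g}\|_{L^p(|c|^{-2}d\l)}\,\|h\|_p$.

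Next I would verify the identity on the dense subspace $L^1(\fa,d\mu)^W\cap L^2(\fa,d\mu)^W$ of $L^p(\fa,d\mu)^W$, which contains $C_c^\infty(\fa)^W$ and is therefore dense for $1\le p\le 2$. For $h$ in this subspace, both $\widehat{h}$ and $\widehat{g}$ lie in $L^2(i\fa^*,|c(\l)|^{-2}d\l)^W$, so polarizing the Plancherel isometry of the hypergeometric Fourier transform gives
$$\int_{\fa}h(x)\,\overline{g(x)}\,d\mu(x)=\int_{i\fa^*}\widehat{h}(\l)\,\overline{\widehat{g}(\l)}\,|c(\l)|^{-2}\,d\l,$$
that is, $T_g(h)=\widehat{T}_g(h)$. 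Since $T_g$ and $\widehat{T}_g$ are continuous linear functionals on $L^p(\fa,d\mu)^W$ that agree on a dense subspace, they agree on all of $L^p(\fa,d\mu)^W$; taking $h=f$ yields the stated equality. The only genuinely delicate point in this plan is the continuity of $\widehat{T}_g$, and within it the integrability of $\widehat{g}$ against $|c(\l)|^{-2}\,d\l$ on $i\fa^*$ at the endpoint $p=1$ (i.e.\ $q=\infty$), which is exactly where one needs the Paley--Wiener rapid decay of $\widehat{g}$ rather than mere boundedness.
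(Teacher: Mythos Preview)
Your proposal is correct and follows essentially the same approach as the paper: the corollary is stated there without a separate proof, merely as a record of the equality $T_g(f)=\widehat{T}_g(f)$ established in the proof of Theorem \ref{thm:inversion}, and you have faithfully reproduced that argument (continuity of both functionals via H\"older, Hausdorff--Young and Paley--Wiener, agreement on $L^1\cap L^2$ via Plancherel, and density). Your explicit treatment of the endpoint cases $p=1$ and $p=2$ is a useful addition.
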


For $0<p\le 2$, we define the $L^p$-Schwartz space $\mathcal C^p(\mathfrak a)^W$ to be the set of
all $C^\infty$ $W$-invariant functions $f$ on $\mathfrak{a}$ such that for each $N\in\N_0$ and
$q\in \polya$,
\begin{equation}
\label{eq:Schwartzp}
\sup_{x\in\mathfrak{a}} (1+|x|)^N \varphi_0(x)^{-\frac 2p}|\partial(q)f(x)|<\infty.
\end{equation}
It is easy to check that $C_c^\infty(\mathfrak{a})^W\subset
\mathcal C^p(\mathfrak a)^W\subset L^p(\mathfrak a , d\mu)^W$.
Hence $\mathcal C^p(\mathfrak a)^W$ is dense in $L^p(\mathfrak a,d\mu)^W$.
As in the geometric case, it can be shown that $\mathcal C^p(\fa)^W$ is a
Frech\'et space with respect to the seminorms defined by the left-hand side of (\ref{eq:Schwartzp}).
It can also be shown that $\mathcal C^{p_1}(\fa)^W\subset \mathcal C^{p_2}(\fa)^W$ when
$p_1\leq p_2$ and that the inclusion map is continuous.

Let $\mathcal S(\fa_{\epsilon_p}^\ast)^W$ be the set
of all $W$-invariant functions
$F:\fa_{\epsilon_p}^\ast\rightarrow \C$ which are
holomorphic in the interior of  $\fa_{\epsilon_p}^\ast$,
continuous on $\fa_{\epsilon_p}^\ast$ and satisfy  for
all $r\in \N_0$ and $s\in\polya$
\begin{equation}
 \label{eq:Schwartzpa}
\sup_{\lambda\in \fa_{\epsilon_p}^\ast}
(1+|\lambda|)^r\left|\partial(s)F(\lambda)\right|<\infty.
\end{equation}
We note that when $p=2$, this space reduces to the usual Schwartz
space of functions on $i\fa^\ast$. It is easy to check that
$\mathcal S(\fa_{\epsilon_p}^\ast)^W$ is a Fr\'{e}chet
algebra under pointwise multiplication and with the topology
induced by the seminorms defined by the left-hand side of (\ref{eq:Schwartzpa}).
Moreover, using the Euclidean Fourier transform, one can prove that $PW(\frakacs)^W$ is a
dense subalgebra of $\mathcal S(\fa_{\epsilon_p}^\ast)^W$.

Recall that the
Euclidean Fourier transform of a sufficiently regular
function $f:\fa\rightarrow \C$
is defined by
$$\widetilde{f}(\lambda)=\int_\fa f(x)e^{\lambda(x)}\,dx\,, \qquad
\lambda\in\fa^\ast_\C.$$
It is an isomorphism between $C_c^\infty(\fa)^W$ and
$PW(\frakacs)^W$. It follows that there is an isomorphism $\mathcal A:
C_c^\infty(\fa)^W\rightarrow C_c^\infty(\fa)^W$ such that
$\widetilde{\mathcal Af}(\lambda)=\widehat{f}(\lambda)$ for all
$\lambda\in \fa^\ast_\C$. In the geometric case, $\mathcal A$ is
nothing but the Abel transform.

We now consider the Schwartz space isomorphism theorem. This
theorem was proved by Schapira \cite[Theorem 4.1]{Schapira} for
the case $p=2$ as an adaptation of Anker's method for the
geometric case (see \cite{AnkerJFA}).
Anker's proof in fact extends to the case
of the hypergeometric Fourier transform on $\mathcal C^p(\fa)$ with
$0<p\leq 2$.
We outline the main steps of the proof for the reader's convenience.

\begin{Thm}\label{Schwartz-space-iso}
For $0<p\leq 2$, the hypergeometric Fourier transform is a topological isomorphism between
$\mathcal C^p(\mathfrak a)^W$ and $\mathcal S(\fa_{\epsilon_p}^\ast)^W$.
\end{Thm}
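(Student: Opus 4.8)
The plan is to follow Anker's strategy \cite{AnkerJFA}: construct continuous linear maps in both directions and recognize them as mutual inverses on dense subspaces. The case $p=2$ is already known (Schapira \cite[Theorem 4.1]{Schapira}, from Opdam's $L^2$-theory) and will be recovered along the way. Throughout I would use the asymptotics $\varphi_0(x)\asymp\big[\prod_{\a\in\Sigma_0^+}(1+\a(x))\big]e^{-\rho(x^+)}$ (Theorem \ref{thm:real} with $\l_0=0$) and $\mu(x)\le C\,e^{2\rho(x^+)}$, together with the fact that, by the $\epsilon_p$-scaled form of (\ref{eq:charCrho}), the dominant representative $\eta^+$ of any $\eta\in C(\epsilon_p\rho)$ lies in $\epsilon_p\rho-\overline{(\fa^*)^+}$, so that $\varphi_\eta(x)\le C(1+|x|)^{N_0}e^{(\epsilon_p-1)\rho(x^+)}$ on $\fa$ (again Theorem \ref{thm:real}). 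The normalization $\epsilon_p=\frac2p-1$ is chosen precisely so that $\tfrac2p=(\epsilon_p-1)+2$, which makes all the exponential factors in $\varphi_0(x)^{2/p}\,\varphi_\eta(x)\,\mu(x)$ cancel.

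First I would show that $\mathcal F$ maps $\mathcal C^p(\fa)^W$ continuously into $\mathcal S(\fa_{\epsilon_p}^*)^W$. For $f\in\mathcal C^p(\fa)^W$ and $\l\in\fa_{\epsilon_p}^*$, the bound $|\varphi_\l(x)|\le\varphi_{\Re\l}(x)$ together with the estimates just recalled shows that the integrand of $\widehat f(\l)=\int_\fa f(x)\varphi_\l(x)\,d\mu(x)$ is dominated, for every $N$, by a constant multiple of $(1+|x|)^{-N+N_0}$ times a seminorm of $f$. Hence $\widehat f$ is well defined, continuous on $\fa_{\epsilon_p}^*$, holomorphic in its interior (differentiation under the integral sign), and bounded there by a seminorm of $f$; replacing $\varphi_\l$ by $\partial(s)\varphi_\l$ and using Schapira's estimates (\ref{der-estimates}) bounds $\partial(s)\widehat f$ on $\fa_{\epsilon_p}^*$ in the same way. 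To obtain the polynomial decay in $\l$ I would use the eigenvalue relation $D_{p_L}\varphi_\l=\inner{\l}{\l}\varphi_\l$: integrating by parts — legitimate since $f$ is smooth and $W$-invariant on all of $\fa$, so no boundary term arises — gives $\inner{\l}{\l}^k\widehat f(\l)=\widehat{D_{p_L}^k f}(\l)$ with $D_{p_L}^k f\in\mathcal C^p(\fa)^W$, and since $\Re\l$ is bounded on $\fa_{\epsilon_p}^*$ one has $(1+|\l|)^{2k}\lesssim 1+|\inner{\l}{\l}|^k$ there; this yields the Schwartz bounds for $\widehat f$ in terms of seminorms of $f$.

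For the inverse I would set $\mathcal G F(x)=\int_{i\fa^*}F(\l)\,\varphi_{-\l}(x)\,|c(\l)|^{-2}\,d\l$, which converges because $|\varphi_{-\l}(x)|\le\varphi_0(x)$ on $i\fa^*$ and $|c(\l)|^{-2}$ grows polynomially. For $x\in\fa^+$, inserting $\varphi_{-\l}=\sum_{w\in W}c(-w\l)\Phi_{-w\l}$ for generic $\l$ and using the $W$-invariance of $F$ and of the Plancherel measure collapses the $|W|$ summands into one, giving $\mathcal G F(x)=|W|\int_{i\fa^*}F(\l)\,c(\l)^{-1}\Phi_{-\l}(x)\,d\l$. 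I would then shift the contour to $\epsilon_p\rho+i\fa^*$. This is legitimate: $F$ is holomorphic and rapidly decreasing on $\fa_{\epsilon_p}^*\supset\{t\rho+i\nu:0\le t\le\epsilon_p,\ \nu\in\fa^*\}$; $c(\l)^{-1}$ is holomorphic on that slab since by Lemma \ref{lemma:poleszerosc} the zeros of $c$ lie on hyperplanes $\mathcal H_{\a,r}$ with $r<0$, whereas $\Re\l_\a=t\rho_\a\ge0$ there ($\rho$ being strictly dominant because $m>0$); and $\Phi_{-\l}(x)$ is holomorphic there by Theorem \ref{thm:polesGamma-Phi}(b), its poles lying on $\mathcal H_{\a,-n}$ with $n\ge1$. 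On $\epsilon_p\rho+i\fa^*$ the leading factor of $\Phi_{-\l}(x)$ is $e^{(-\epsilon_p\rho-\rho)(x)}=e^{-(2/p)\rho(x)}$; combining this with the Stirling bound $|c(\l)^{-1}|\le C(1+|\Im\l|)^{N_1}$, the uniform estimate for the Harish-Chandra tail on $x_0+\overline{\fa^+}$ (Lemma \ref{lemma:conv-diffHCseries}), and integration by parts in $\l$ to produce the powers of $x$, gives $|\partial(q)\mathcal G F(x)|\le C_N(1+|x|)^{-N}\varphi_0(x)^{2/p}$ for $x\in x_0+\overline{\fa^+}$, all constants being controlled by seminorms of $F$. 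Near the walls of $\fa^+$, where the Harish-Chandra series is unavailable, I would reduce to the interior by a Harnack/subadditivity argument for $\mathcal G F$ in the spirit of Lemma \ref{lemma:subadd-Schapira} — or, equivalently, realize $\mathcal G$ as the composition of the inverse Abel transform with the Euclidean inverse Fourier transform and invoke the support and continuity properties of the Abel transform — using also that $\mathcal G F\in C^\infty(\fa)^W$ with derivatives bounded on compact sets, which is clear from the unshifted integral and (\ref{der-estimates}). This gives $\mathcal G:\mathcal S(\fa_{\epsilon_p}^*)^W\to\mathcal C^p(\fa)^W$ continuous.

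To conclude, the $L^2$ inversion formula yields $\mathcal G\circ\mathcal F=\mathrm{id}$ on $C_c^\infty(\fa)^W$, hence on all of $\mathcal C^p(\fa)^W$ by density and the continuity of $\mathcal F$ and $\mathcal G$; and for $F\in PW(\frakacs)^W$ the Paley-Wiener theorem gives $\mathcal G F=\mathcal F^{-1}F\in C_c^\infty(\fa)^W$, so $\mathcal F\circ\mathcal G=\mathrm{id}$ on $PW(\frakacs)^W$ and therefore on $\mathcal S(\fa_{\epsilon_p}^*)^W$, since $PW(\frakacs)^W$ is dense there. Thus $\mathcal F$ and $\mathcal G$ are mutually inverse topological isomorphisms. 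I expect the main obstacle to be the inverse direction above: justifying the contour shift (tracking the poles of $c(\l)^{-1}$ and of $\Phi_{-\l}(x)$ relative to the tube, and the decay of $F$ needed to discard the contributions at infinity) and, above all, estimating $\mathcal G F$ near the walls of $\fa^+$, where the Harish-Chandra expansion cannot be used and one must argue by subadditivity or via the Abel transform. The remaining steps are bookkeeping with (\ref{der-estimates}) and the asymptotics of $\varphi_0$ and $d\mu$.
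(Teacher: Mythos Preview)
Your forward direction is essentially the paper's. The divergence, and the gap, is in the inverse direction.

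The paper does not construct $\mathcal G$ directly and prove its continuity. Instead, following Anker, it establishes an a priori estimate: given a seminorm $\beta$ of $\mathcal C^p(\fa)^W$, there exist a seminorm $\eta$ of $\mathcal S(\fa_{\epsilon_p}^*)^W$ and $C>0$ with $\beta(f)\le C\,\eta(\widehat f)$ for all $f\in C_c^\infty(\fa)^W$. Since $\mathcal F(C_c^\infty(\fa)^W)=PW(\frakacs)^W$ is dense in $\mathcal S(\fa_{\epsilon_p}^*)^W$, this yields surjectivity and continuity of $\mathcal F^{-1}$. The key device is a polar-set decomposition $\fa=\Gamma_1\sqcup\bigsqcup_{r\ge 2}(\Gamma_r\setminus\Gamma_{r-1})$ with $\Gamma_r=\{x:\rho(x^+)\le r\}$, together with an Abel-transform cutoff: one manufactures $f_r\in C_c^\infty(\fa)^W$ with $\mathcal A f_r=(1-w_r)\mathcal A f$ (so $f=f_r$ outside $\Gamma_r$), controls $\partial(q)f_r$ on $\Gamma_{r+1}\setminus\Gamma_r$ via the $L^2$-inversion, and then estimates $\eta^{(2)}_{N_1,1}(\widehat{f_r})$ by Euclidean Fourier analysis applied to $\mathcal A f$ on $\fa\setminus\Gamma_{r-1}$, the shift to $\epsilon_p\rho$ being carried out there. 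This trick is precisely what handles the unbounded region near the walls uniformly in $r$.

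Your contour-shift argument for $\mathcal G F$ (rewriting via $c(\l)^{-1}\Phi_{-\l}$ and moving to $\epsilon_p\rho+i\fa^*$) is legitimate on $x_0+\overline{\fa^+}$, but the complement $\overline{\fa^+}\setminus(x_0+\overline{\fa^+})$ is \emph{not} compact: it contains points with $\rho(x)$ arbitrarily large, so the missing estimate is not a bounded remainder. Your proposed fix --- a subadditivity argument ``in the spirit of Lemma~\ref{lemma:subadd-Schapira}'' --- does not go through: that lemma rests on the positivity of $\varphi_\l$ for real $\l$ and on the gradient identity (\ref{eq:localHarnack}), both special to the hypergeometric function; a general wave packet $\mathcal G F$ need not even have a sign, so there is no Harnack inequality available. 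Your parenthetical alternative (factor $\mathcal G$ through the inverse Abel transform) is the right instinct, but it is not a one-line invocation: one does not have a direct continuity statement for $\mathcal A^{-1}$ between the relevant Schwartz spaces, and making it work requires exactly the $\Gamma_r$/$w_r$ cutoff mechanism that the paper implements. In short, the obstacle you flag at the end is real, neither of your suggested remedies closes it, and Anker's polar-set decomposition is the known way around it.
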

\begin{proof}
Using (\ref{der-estimates}) and the estimates for $\varphi_0$ (which are a special case of Theorem \ref{thm:real}),
one can check, as in the geometric case (see e.g. \cite[Theorem 7.8.6]{GV}), that the hypergeometric
Fourier transform $\mathcal F$ maps $\mathcal C^p(\mathfrak a)^W$ into $\mathcal S(\fa_{\epsilon_p}^\ast)^W$
and is continuous. Since $\mathcal C^p(\mathfrak a)^W\subset \mathcal C^2(\mathfrak a)^W\subset L^2(\mathfrak a, d\mu)^W$,
it follows from the Plancherel theorem that $\mathcal F$ is
injective. We now show that $\mathcal F$ maps $\mathcal C^p(\mathfrak a)^W$ into $\mathcal
S(\fa_{\epsilon_p}^\ast)^W$ is surjective and that its inverse
map is continuous. For this, using the Paley-Wiener theorem, it is
sufficient to prove that given a seminorm $\beta$ of $\mathcal
C^p(\mathfrak a)^W$, there exists a seminorm $\eta$ of $\mathcal
S(\fa_{\epsilon_p}^\ast)^W$ and a constant $C>0$ such that $$\beta(f)\leq C \eta(\widehat{f})$$
for all $f\in
C_c^\infty(\fa)$.
For
$r\in\N_0$ and $s\in\polya$, let
$$\eta_{r,s}^{(p)}(F)=\sup_{\lambda\in \mathfrak{a_{\epsilon_p}^\ast}} (1+|\lambda|)^r |\partial(s)F(\lambda)|\,, \qquad
F\in \mathcal S(\fa_{\epsilon_p}^\ast)^W$$ and let
$$\beta(f)=\sup_{x\in\mathfrak{a}}
 (1+|x|)^N \varphi_0(x)^{-\frac 2p}|\partial(q)f(x)|\,,\qquad f\in \mathcal C^p(\mathfrak a)^W.$$
 For each positive integer $r$, let $\Gamma_r=\{x\in\fa: \rho(x^+)\leq r\}$ be the polar set of $r^{-1}\rho$
(see section \ref{subsection:hypergeomFourier} for the notation).
Then each $\Gamma_r$ is convex, compact and $W$-invariant.
Moreover,  $\fa$ is the disjoint union
 of $\Gamma_1$ and $\Gamma_r \setminus \Gamma_{r-1}$, $r \geq 2 .$

Let $f\in C_c^\infty(\fa)^W$. Then, using the inversion formula and
the estimates (\ref{der-estimates}), one checks
that
$$\sup_{x\in\Gamma_1} (1+|x|)^N \varphi_0(x)^{-\frac
2p}|\partial(q)f(x)|\leq C_1 \eta_{N_1, 1}^{(2)}(\widehat{f})$$
for some $N_1\in \N$. Here we have used that $\Gamma_1$ is
compact.

Let $w_r\in C_c^\infty(\fa)^W$ be equal to $1$ on $\Gamma_{r-1}$, equal to
$0$ on $\fa \setminus \Gamma_r$ and such that $w_r$ and all of
its derivatives are bounded, uniformly on $r \in \N$.
Since $\mathcal A$ is an isomorphism, there exists $f_r\in C_c^\infty(\fa)^W$
such that $(1-w_r)\mathcal Af=\mathcal
Af_r$. From this it follows that $f$ and $f_r$ may differ only inside $\Gamma_r$.

Using again the inversion formula on $f_r$, one observes
that $$|\partial(q)f_r(x)|\leq C_2 \varphi_0(x)\,\eta_{N_1,1}^{(2)}(\widehat{f_r}).$$
We now estimate $\eta_{N_1,1}^{(2)}(\widehat{f_r})$. Using the relation
$\widetilde{\mathcal Af_r}(\lambda)=\widehat{f_r}(\lambda)$ and  Euclidean Fourier
analysis, we prove that
\begin{equation}
 \label{eq:estim-fr}
\eta_{N_1, 1}^{(2)}(\widehat{f_r})\leq
C_3 \sum_{k=0}^{N_1}\sup_{x\in \fa\setminus\Gamma_{r-1}}(1+|x|)^{l+1}
|\nabla_{\frak a}^k \mathcal Af(x)|.
\end{equation}
Here $l=\dim \fa$ and $\nabla_{\frak a}$ is the
gradient operator on $\fa$.

Next we estimate $\sup_{x\in \Gamma_{r+1}\setminus
\Gamma_r}(1+|x|)^N \varphi_0(x)^{-\frac 2p}|\partial(q)f(x)|$. Note
that $f=f_r$ on $\Gamma_{r+1}\setminus \Gamma_r$. Then,
using the previous steps and the estimate of $\varphi_0$, we get
\begin{equation}
 \label{eq:estim-norm}
\sup_{x\in \Gamma_{r+1}\setminus \Gamma_r}(1+|x|)^N
\varphi_0(x)^{-\frac 2p}|\partial(q)f(x)|\leq C_4 r^{N_2}e^{\epsilon_p r}\eta_{N_1,1}^{(2)}(\widehat{f_r})
\end{equation}
for some $N_2\in \N$.
By (\ref{eq:estim-fr}), the right-hand side of  is dominated by
$$C_5
\sum_{k=0}^{N_1}\sup_{x\in\fa^+\setminus
\Gamma_{r-1}}(1+|x|)^{N_2+l+1} e^{\epsilon_p\rho(x)}| \nabla^k\mathcal Af(x)|.$$
In turn, this is dominated by
$$C_6
\sum_{m=0}^{N_2+l+1}\int_{i\fa^\ast}(1+|\l|)^{N_1}\left|
\nabla^m\widehat{f}\left(\l+\epsilon_p\rho\right)\right|\,d\l.$$
This inequality follows from the fact that for any polynomials $p$
and $q$
$$p(x)e^{\epsilon_p\rho(x)}\partial(q)g(x)=c\int_{i\fa^\ast}\partial(p)\left\{q\left(\lambda-\epsilon_p\rho\right)\,
h\left(-\lambda+\epsilon_p\rho\right)\right\}e^{-\lambda(x)}\,d\lambda$$
where $g(x)=\int_{i\fa^\ast}h(\lambda)e^{-\lambda(x)}\,d\lambda$.
Thus
$$\sup_{x\in \Gamma_{r+1}\setminus \Gamma_r}(1+|x|)^N \varphi_0(x)^{-\frac 2p}|\partial(q)f(x)|\leq
C_7 \sum_{m=0}^{N_2+l+1}\sup_{\lambda\in\fa_{\epsilon_p}^\ast}(1+|\lambda|)^{N_1+l+1}|\nabla^m\widehat{f}(\lambda)|.$$
This completes the proof.
\end{proof}

We conclude this section with a remark on the convolution structure of $L^1(\mathfrak a, d\mu)^W$.
Suppose that the triple $(\mathfrak a, \Sigma, m)$ is geometric and corresponds to the Riemannian
symmetric space of the noncompact type $G/K$.
The space $L^1(G/K)^K$ of $K$-invariant $L^1$ functions on $G/K$ is a commutative Banach algebra with
respect to the $L^1$-norm and a natural positive convolution structure.
This structure is transferred to $L^1(\mathfrak a, d\mu)^W$ by restriction to
$\mathfrak a\equiv \exp \mathfrak a$. The maximal ideals of $L^1(G/K)^K$ are all of the form
$$M_\l=\{f \in L^1(G/K)^K: \int_G f(g)\varphi_\l(g) \, dg=0\}$$
where $\varphi_\l$ is a bounded spherical function. It follows that, in the geometric case,
the bounded hypergeometric functions $\varphi_\l$ parametrize the characters of the commutative algebra
$L^1(\mathfrak a, d\mu)^W$.
For arbitrary triples, to find a positive convolution structure is an important open problem.
When the rank is one, this is completely settled by the results of Flensted-Jensen and Koornwinder \cite{FK}.
When the rank is greater than one, the only result available is due to R\"osler
\cite{Ro} where the root system is of the type $BC$ and multiplicity function ranges
in three continuous one-parameter families. In both cases, it has been proven that the bounded
hypergeometric functions indeed give the characters of $L^1(\mathfrak a, d\mu)^W$.

\bibliographystyle{model1b-num-names}

\end{document}